\documentclass{article}
 \pagestyle{empty}
\usepackage[english]{babel}
\usepackage{amsmath}
\usepackage{amsthm}
\usepackage{amssymb}
\usepackage{amscd, enumerate}
\usepackage[latin1]{inputenc}
\usepackage{pstricks}
\usepackage{graphicx}
\usepackage[all]{xy}

\newtheorem{teor}{Theorem}[section]
\newtheorem{defi}{Definition}
\newtheorem{lema}[teor]{Lemma}
\newtheorem{prop}[teor]{Proposition}
\newtheorem{cor}[teor]{Corollary}
\newtheorem{rem}[teor]{Remark}

\newtheorem{notat}[teor]{Notation}

\marginparwidth=0cm \hoffset=-3cm \textwidth=15cm \oddsidemargin=
3cm \evensidemargin=1.5cm \textheight=24cm \voffset=-1.5cm

\begin{document}

\pagestyle{plain}

\begin{center}

\large{\bf{The Hochschild Cohomology ring of preprojective algebras
of type $\mathbb{L}_n$}}

\vspace{1cm}

Estefanía Andreu Juan

\vspace{0.5cm}

Department of Mathematics, University of Murcia

Campus de Espinardo

30100 Murcia

Spain

Phone number: +34 868884181

Fax number: +34 868884182

\ E-mail address: eaj1@um.es

\vspace{1cm}
\end{center}

\begin{center} \textbf{ABSTRACT}

\end{center}

We compute the Hochschild Cohomology of a finite-dimensional
preprojective algebra of gene\-ra\-lized Dynkin type $\mathbb{L}_n$
over a field of characteristic different from 2 .  In particular, we
describe the ring structure of the Hochschild Cohomology ring under
the Yoneda product by giving an explicit presentation by generators
and relations.

\vspace{0.4cm}

\noindent \textbf{Keywords}: preprojective algebra, periodic
algebra, Hochschild cohomology ring.

\vspace{0.4cm}

\noindent Classification Code: 16Gxx ; Representation theory of
rings and algebras.

\section{Introduction}

Given a nonoriented finite graph $\Delta$, with $\Delta_0$ and
$\Delta_1$ as sets of vertices and edges, respectively, the
\emph{preprojective algebra} of (type) $\Delta$, denoted $P(\Delta
)$, is the algebra given by quiver and relations as follows. The
quiver $Q:=Q_\Delta$ of $P(\Delta )$ has the same vertices and the
same loops as $\Delta$. Then, for each edge
$i\frac{\hspace{0.2cm}}{\hspace{0.7cm}}j$ in $\Delta$ which is not a
loop, $Q$ will have two opposite arrows $a:i\rightarrow j$ and
$\bar{a}:j\rightarrow i$. Convening that $\bar{a}=a$ whenever $a$ is
a loop, the algebra is subject to as many relations as vertices in
$\Delta$, namely, one relation $\sum_{i(a)=i}a\bar{a}=0$ per each
$i\in Q_0=\Delta_0$ (here $i(a)$ denotes the initial vertex of $a$).

The notion of preprojective algebra first appeared in the late 70s
in the work of Gelfand and Ponomarev \cite{GP} to study the
representation theory of a finite quiver without oriented cycles.
They found their first applications in classification problems of
algebras of finite type (\cite{DR1}, \cite{DR2}) and have been
linked to universal enveloping algebras and cluster algebras
(\cite{GLS1}, \cite{GLS2}). They also occur in very diverse parts of
mathematics. For instance, they play a special role in Lusztig's
perverse sheaf approach to quantum groups (\cite{L1}, \cite{L2}) and
have been used to tackle differential geometry problems \cite{K} or
to study non-commutative deformations of Kleinian singularities
\cite{CBH}.

It is well known that $P(\Delta)$ is finite-dimensional if and only
if $\Delta$ is a disjoint union of generelized Dynkin graphs,
$\mathbb{A}_n,$ $\mathbb{D}_n,$ $ \mathbb{E}_6,$ $ \mathbb{E}_7,$ $
\mathbb{E}_8$ or $\mathbb{L}_n$, where $\mathbb{L}_n$ is the graph:

\vspace{0.5cm}

\begin{center}

\psarc{-}(1,0.1){0.5}{35}{330} \hspace{1.4cm} $\bullet$
$\frac{\hspace{0.2cm}}{\hspace{1cm}}$ $\bullet$
$\frac{\hspace{0.2cm}}{\hspace{1cm}}$ $\cdots \cdots$
$\frac{\hspace{0.2cm}}{\hspace{1cm}}$ $\bullet$ \hspace{1cm} ($n\geq
1$ vertices)

\end{center}

\vspace{0.5cm}

The aim of this paper is to determine the  structure of the
classical and stable Hochschild cohomology rings of a preprojective
algebra of type $\mathbb{L}_n$ over an algebraically closed field
$K$, provided that $Char(K)$ is different from 2, and the structure
as graded modules over them of the classical and stable Hochschild
homology. For preprojective algebras of Dynkin type, the  structure
of the Hochschild cohomology ring is known for type $\mathbb{A}_n$
in arbitrary characteristic (\cite{ES1}, \cite{ES2}) and, in the
case of a field of characteristic zero, for types $\mathbb{D}_n$ and
$\mathbb{E}$ \cite{C-H.E}.

An important common feature of the preprojective algebras of
generalized Dynkin type is that, except for $\Delta=\mathbb{A}_1$,
$P(\Delta)$ is ($\Omega-$)periodic of period at most 6 (thus
self-injective) where $\Omega$ is the Heller's syzygy operator (see
\cite{RS} and \cite{ES3} for the Dynkin cases and \cite{BES} for the
case $\mathbb{L}_n$). The multiplicative structure of the Hochschild
cohomology ring $HH^*(\Lambda )$ for a self-injective finite
dimensional algebra $\Lambda$ is of great interest  in connection
with the study of varieties of modules  and with questions about its
relationship with the Yoneda algebra of $\Lambda$. This is the
graded algebra $E(\Lambda )=Ext_\Lambda^*(\Lambda /J,\Lambda /J)$,
where $J=J(\Lambda )$ denotes the Jacobson radical of $\Lambda$.
Indeed, with inspiration from modular representation theory of
finite groups, where the theory of varieties of modules had been
developed by Carlson (\cite{C1}, \cite{C2}), Benson (\cite{B}) and
others, Snashall and Solberg (\cite{SnSo}, see also \cite{EHSST})
started the study of varieties of modules over arbitrary finite
dimensional algebras, replacing the group cohomology ring
$HH^*(G,K)$ by the Hochschild cohomology ring $HH^*(\Lambda )$ of
the considered algebra $\Lambda$. For the new theory to be
satisfactory it is generally required that $\Lambda$ is
self-injective and it is necessary that the algebra $HH^*(\Lambda )$
satisfies some finite generation conditions, which are always
satisfied when $\Lambda$ is periodic. On the other side, there is a
canonical homomorphism of graded algebras $\varphi :HH^*(\Lambda
)\longrightarrow E(\Lambda )$ whose image is contained in the graded
center $Z^{gr}(E(\Lambda ))$ of $E(\Lambda )$ \cite{SnSo}. It is
known that, in case $\Lambda$ is a Koszul algebra, one has
$Im(\varphi )=Z^{gr}(E(\Lambda ))$ \cite{BGSS}, but little else is
known on the inclusion $Im(\varphi )\subseteq Z^{gr}(E(\Lambda ))$
for general algebras. Related with this question, there is an
intriguing open problem (\cite{GSS}) which asks wether
$\Omega$-periodicity of $\Lambda /J$ as a $\Lambda$-module implies
that $\Lambda$ is a periodic algebra.

The above questions suggest that finding patterns of behaviour of
the homogeneous elements of $HH^*(\Lambda )$ with respect to the
Yoneda product, in particular cases where the multiplicative
structure of $HH^*(\Lambda )$ is computable,  can help to give some
hints on how to tackle them.

The two main results of the paper are the following, from which all
the desired structures (classical and stable Hochschild homology and
cohomology) are described (see Corollaries \ref{structure of
homology as graded module} and \ref{cor:presentation:stable
cohomology ring}).

\begin{teor}\label{HH^* with Char not 2 not dividing 2n+1} Let $\Lambda
=P(\mathbb{L}_n)$ and suppose that $Char(K)\neq 2$ and $Char(K)$
does not divide $2n+1$. Then $HH^*(\Lambda)$ is the graded
commutative algebra given by

\begin{enumerate}[a)]

    \item \underline{Generators}: $x_0, x_1, \dots x_n, y, z_1, \dots, z_n, \gamma,
    h$ with degrees $deg(x_i)=0$, $deg(y)= 1$, $deg(z_j)= 2$, $deg(\gamma)=
    4$ and $deg(h)=6$.

    \item \underline{Relations}:

    \begin{enumerate}[i)]

        \item $x_i\xi =0$, for each $i=1, \dots , n$ and each
        generator $\xi$.

        \item $x_0^n= y^2= x_0 z_j=0$ $(j=1, \dots, n)$

        \item $z_jz_k= (-1)^{k-j+1}(2j-1)(n-k+1)x_0^{n-1}\gamma$ for
        $1\leq j\leq k\leq n$.

        \item $z_j\gamma= (-1)^{j}(n-j+1)x_0^{n-1}h$ $(j=1, \dots, n)$

        \item $\gamma^2= z_1h$

    \end{enumerate}

\end{enumerate}

\end{teor}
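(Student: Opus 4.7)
The plan is to exploit periodicity. By \cite{BES}, $\Lambda=P(\mathbb{L}_n)$ is a self-injective algebra of $\Omega$-period~$6$, so its minimal projective bimodule resolution $(P_\ast,d_\ast)$ is $6$-periodic with $P_6\cong P_0$. Applying $\mathrm{Hom}_{\Lambda^e}(-,\Lambda)$ gives a cochain complex whose cohomology is $HH^\ast(\Lambda)$, and the periodicity class $h\in HH^6(\Lambda)$ arising from a chosen isomorphism $P_6\xrightarrow{\sim}P_0$ turns $HH^\ast(\Lambda)$ into a free $K[h]$-module of finite rank. Therefore it suffices to compute everything in degrees $0\le k\le 5$ and then invoke the identifications $HH^{6m+r}(\Lambda)=h^m\cdot HH^r(\Lambda)$ for $0\le r<6$.

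My first task is to write down the terms $P_0,\dots,P_6$ of the resolution explicitly, starting from the quiver $Q_{\mathbb{L}_n}$ with its preprojective relations and with the differentials determined by the Nakayama automorphism of $\Lambda$, and then to produce a $K$-basis of each $HH^k(\Lambda)$ for $k=0,\dots,5$ by computing cocycles modulo coboundaries in $\mathrm{Hom}_{\Lambda^e}(P_\ast,\Lambda)$. In degree~$0$ one recovers the center $Z(\Lambda)$; the central element corresponding to the loop $a$ at the first vertex is called $x_0$, the remaining basis elements $x_1,\dots,x_n$, and these already satisfy relations (i) and (ii) at the level of $\Lambda$. Cocycle representatives for $y$, for $z_1,\dots,z_n$, for $\gamma$ and for $h$ are then selected in degrees $1,2,4,6$ respectively. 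The hypotheses $Char(K)\neq 2$ and $Char(K)\nmid 2n+1$ enter precisely here, guaranteeing that the rank of certain matrices of coboundaries is as expected; if either divisibility held, extra cocycles would survive and the presentation would change accordingly.

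The main technical burden is the computation of Yoneda products. For each generator $f\in HH^k(\Lambda)$ I construct a chain lifting $\widetilde{f}_\ast\colon P_{\ast+k}\to P_\ast$, and the product of two generators is then represented by composing their liftings and evaluating in the appropriate degree. Carrying this out carefully will produce the scalars $(-1)^{k-j+1}(2j-1)(n-k+1)$ in relation (iii) and $(-1)^j(n-j+1)$ in (iv). The identity $\gamma^2=z_1h$ in (v) is the hardest point, since a lifting of the degree-$4$ cocycle $\gamma$ must be propagated across a full period in order to compare the resulting degree-$8$ element with $z_1h$, and it is at this stage that the precise choice of the periodicity isomorphism $P_6\cong P_0$ becomes crucial and the characteristic hypotheses are used once more to invert the relevant scalars.

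Once all products are determined, the proof concludes by a dimension count in each degree. Let $A$ denote the graded commutative $K$-algebra defined by the generators and relations in the statement. The relations force $A$ to be a free $K[h]$-module whose degree-$r$ piece, for $0\le r<6$, is spanned by the monomials $x_0^i$, $x_0^i y$, $x_0^i z_j$, $x_0^i y z_j$, $x_0^i\gamma$ and $x_0^i y\gamma$ that survive (i)--(v); I then check that the dimension of each such piece matches the $\dim_K HH^r(\Lambda)$ obtained from the periodic resolution. Since the canonical map $A\to HH^\ast(\Lambda)$ sending each generator to its chosen cocycle is a well-defined surjection of graded $K[h]$-modules of equal finite rank, it is an isomorphism, which completes the proof.
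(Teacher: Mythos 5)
Your overall strategy coincides with the paper's: exploit the period-$6$ bimodule resolution, produce canonical cocycle bases in degrees $0$ through $6$, compute Yoneda products by chain liftings, and finish with a degree-by-degree dimension count against the abstractly presented algebra. However, two concrete points in your outline are wrong and one of them would derail the argument. First, $HH^*(\Lambda)$ is \emph{not} a free $K[h]$-module: the socle elements $x_1,\dots,x_n\in HH^0(\Lambda)=Z(\Lambda)$ represent endomorphisms factoring through a projective bimodule, so they are killed by $h$ (this is exactly relation (i)), and indeed $\dim HH^0(\Lambda)=2n$ while $\dim HH^6(\Lambda)=\dim\bigl(Z(\Lambda)/\mathrm{Soc}(\Lambda)\bigr)=n$. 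Multiplication by $h$ is an isomorphism $HH^k\to HH^{k+6}$ only for $k>0$; in degree $0$ it is surjective with kernel $\mathrm{Soc}(\Lambda)$. Your closing step ``surjection of free graded $K[h]$-modules of equal finite rank'' therefore does not make sense as stated; it must be replaced by a comparison of $\dim_K$ in each cohomological degree separately, which is what the paper does.

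Second, and more seriously, you have mislocated where the hypothesis $Char(K)\nmid 2n+1$ enters. It does not affect which cocycles survive: the dimensions $\dim HH^0=2n$ and $\dim HH^i=n$ for $i>0$ hold for every $Char(K)\neq 2$, and the relation $\gamma^2=z_1h$ also holds in both cases (it appears unchanged in the companion theorem for $Char(K)\mid 2n+1$), so no scalar involving $2n+1$ needs to be inverted there. The hypothesis is used at exactly one place: to show that multiplication by $y$ carries $HH^2(\Lambda)$ \emph{onto} $HH^3(\Lambda)$. In the canonical bases this map has matrix $C$ with $C_{jk}=(-1)^{k-j+1}(2j-1)(n-k+1)$ for $j\le k$ and $\det(C)=\pm(2n+1)^{n-1}$; when $2n+1$ is invertible the rank is $n$ and the degree-$3$ classes $t_j$ are linear combinations of the products $yz_k$, which is what allows you to omit degree-$3$ generators and is what makes the monomials $x_0^iyz_j$ span the degree-$3$ component of your algebra $A$. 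When $Char(K)\mid 2n+1$ the rank of $C$ drops to $1$ and extra generators $t_1,\dots,t_{n-1}$ are unavoidable. Without identifying this as the role of the hypothesis, your proposal cannot justify either the sufficiency of the listed generators or the surjectivity of $A\to HH^*(\Lambda)$ in degree $3$, and the final dimension count collapses.
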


\begin{teor}\label{HH^* with char 2n+1} Let $\Lambda=
P(\mathbb{L}_n)$ and suppose that $Char(K)$ divides $2n+1$. Then
$HH^*(\Lambda)$ is the graded commutative algebra given by

\begin{enumerate}[a)]

    \item \underline{Generators}: $x_0, x_1, \dots, x_n, y, z_1, \dots, z_n,
    t_1, t_2, \dots t_{n-1}, \gamma, h$ with degrees $deg(x_i)=0$,
    $deg(y)=1$, $deg(z_j)=2$, $deg(t_k)=3$, $deg(\gamma)=4$ and
    $deg(h)=6$.

    \item \underline{Relations}:

    \begin{enumerate}[i)]

        \item $x_i\xi=0$ for each $i=1, \dots, n$ and each generator
        $\xi$.

        \item $x_0^n = y^2 = x_0z_j= x_0t_i= yt_i= t_it_k=0$ $(j=1, \dots, n
        \hspace{0.3cm}i,k=1, \dots n-1)$

        \item $z_jz_k=(-1)^{k-j+1}(2j-1)(n-k+1)x_0^{n-1}\gamma$ for
        $1\leq j\leq k\leq n$.

        \item $z_j\gamma= (-1)^j(n-j+1)x_0^{n-1}h$

        \item $\gamma^2= z_1h$

        \item $yz_j=(-1)^{j-1}(2j-1)yz_1$

        \item $z_kt_j=\delta_{jk}x_0^{n-1}y\gamma$ $(k=1, \dots, n \hspace{0.4cm} j=1\dots, n-1)$

        \item $t_j\gamma = \delta_{1j}x_0^{n-1}yh$ $(j=1, \dots,
        n)$.

    \end{enumerate}

\end{enumerate}

\end{teor}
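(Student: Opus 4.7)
The plan is to work with a minimal projective bimodule resolution $(P_\bullet, d_\bullet)$ of $\Lambda = P(\mathbb{L}_n)$, which by the periodicity result of \cite{BES} has period $6$. Applying $\operatorname{Hom}_{\Lambda^e}(-,\Lambda)$ yields a cochain complex whose cohomology is $HH^*(\Lambda)$; periodicity implies that multiplication by a suitable degree-$6$ class $h$ gives isomorphisms $HH^k(\Lambda)\cong HH^{k+6}(\Lambda)$ for $k\geq 1$. Consequently it suffices to identify a basis of $HH^k(\Lambda)$ for $k=0,\dots,6$ and to compute the products among basis elements.

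First I would compute each $HH^k(\Lambda)$ as a $K$-vector space by direct diagram chasing on $\operatorname{Hom}_{\Lambda^e}(P_\bullet, \Lambda)$. The essential difference with Theorem \ref{HH^* with Char not 2 not dividing 2n+1} is that when $Char(K)$ divides $2n+1$ certain scalars appearing in the differentials vanish, enlarging both kernels and failing to produce the previous coboundaries. In degree $3$ this yields an $(n-1)$-dimensional subspace not present in the generic case, whose basis $t_1,\dots,t_{n-1}$ gives the new generators. A parallel coefficient degeneration produces the relation (vi) $yz_j=(-1)^{j-1}(2j-1)yz_1$, which in the generic case is an equality of zero elements but here becomes genuinely nontrivial.

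Second, I would compute the multiplicative structure by lifting each cocycle to a chain endomorphism $P_{\bullet+i}\to P_\bullet$ and taking the induced cup products. The relations (i)--(v) are formally identical to those of Theorem \ref{HH^* with Char not 2 not dividing 2n+1} and are obtained by the same chain-level computations; the characteristic hypothesis there is used only to eliminate the $t_j$ from consideration, not to verify those products. The genuinely new computations are the annihilation relations in (ii) involving the $t_i$, together with (vii) $z_kt_j=\delta_{jk}x_0^{n-1}y\gamma$ and (viii) $t_j\gamma=\delta_{1j}x_0^{n-1}yh$. Finally, completeness of the relation list would be confirmed by showing that the presented algebra surjects onto $HH^*(\Lambda)$ and comparing graded dimensions degree by degree for $k\leq 6$, the equality then propagating to all higher degrees by periodicity.

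The main obstacle will be the verification of (vii) and (viii): they require explicit comparison morphisms of length $2$ and $4$ respectively, along which one must track arrow signs, Kronecker deltas and coefficients coming from the defining relations of $\Lambda$. The Kronecker delta pattern in particular reflects an orthogonality between the cocycles $t_j$ and the classes $z_k, \gamma$ that is invisible from a degree or dimension count and must be checked by direct calculation on the bimodule generators of $P_3, \dots, P_6$. Keeping the sign bookkeeping consistent with the conventions used to derive (iii)--(v) will be the most delicate part, since a single sign mismatch would invalidate the surjectivity argument that closes the proof.
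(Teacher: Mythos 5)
Your overall architecture (period-$6$ resolution, identification of bases in degrees $0$--$6$, chain-level computation of products, closing by a surjectivity-plus-dimension-count argument) matches the paper's. But the mechanism you propose for where the case $Char(K)\mid 2n+1$ differs from the generic case is wrong, and this is the heart of the theorem. You claim that when $Char(K)$ divides $2n+1$ "certain scalars appearing in the differentials vanish, enlarging both kernels," so that degree $3$ acquires an $(n-1)$-dimensional subspace "not present in the generic case." In fact the additive structure does not change at all: the paper's Theorem \ref{dimension of HH^i} shows $\dim HH^i(\Lambda)=n$ for all $i>0$ in every characteristic $\neq 2$, and the space $HH^3(\Lambda)$ has the same canonical basis $t_1,\dots,t_n$ (coming from $Soc(\Lambda)$) in both cases. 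The differentials of $V^\bullet$ involve only entries $\pm 1$ and $\pm 2$, so their ranks are insensitive to whether $p$ divides $2n+1$. A diagram chase on $\operatorname{Hom}_{\Lambda^e}(P_\bullet,\Lambda)$ of the kind you describe would therefore find nothing new and would not produce the extra generators.

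What actually changes is the rank of the Yoneda multiplication map $\phi_y\colon HH^2(\Lambda)\to HH^3(\Lambda)$. Its matrix $C$ with respect to the canonical bases has integer entries $C_{jk}=(-1)^{k-j+1}(2j-1)(n-k+1)$ and determinant $\pm(2n+1)^{n-1}$ (Lemma \ref{HH*1xHH*2}), so $\operatorname{rank}(C)=n$ generically but $\operatorname{rank}(C)=1$ when $Char(K)\mid 2n+1$. In the generic case the $t_j$ are therefore expressible as combinations of the $yz_k$ and can be dropped as ring generators; in the modular case the products $yz_1,\dots,yz_n$ span only a line, forcing one to retain $t_1,\dots,t_{n-1}$ as generators (with $\{t_1,\dots,t_{n-1},yz_1\}$ a basis of $HH^3$). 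Likewise relation (vi) is not "an equality of zero elements" degenerating into something nontrivial: in the generic case $yz_j\neq(-1)^{j-1}(2j-1)yz_1$ in general (the $yz_j$ are linearly independent there); the relation holds in the modular case precisely because it expresses the linear dependence among the columns of the rank-one reduction of $C$. Without identifying this multiplication matrix and its rank drop, your plan cannot produce the generators $t_1,\dots,t_{n-1}$, the relation (vi), nor justify the count of generators, so the surjectivity-and-dimension argument at the end would not close.
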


\begin{rem} \rm
We recently learnt about the preprint \cite{Eu}, where the
multiplicative and the Batalin-Vilkovisky structure of $HH^*(\Lambda
)$ is calculated in characteristic zero (actually over the complex
numbers). We do not look at the Gerstenhabber bracket in this paper.
On what concerns the multiplicative structure,  the techniques used
in our paper are  valid for all characteristics $\neq 2$ and detect
a subtle difference of behaviour between the cases when $char(K)$
divides or not $2n+1$, where $n$ is the number of vertices. All
through the paper, we will make frequent comments on the
similarities and disimilarities of our results and those  in
\cite{Eu}.
\end{rem}

The organization of the paper is as follows. In Section (2) we
recall some general facts concerning self-injective algebras which
are needed through paper. Special emphasis is put on the behaviour
of the classical and stable Hochschild homology and cohomology of
these algebras, revisiting some results of Eu and Schedler \cite{ES}
concerning the Calabi-Yau Frobenius algebras.  We also show that the
stable Hochschild cohomology ring is a localization of the classical
one for periodic algebras (see Proposition \ref{localization}). We
introduce the concept of dualizable basis and give conditions for
its existence (Lemma \ref{lema:basis for graded-Schurian algebras}).
In section (3) we introduce the preprojective algebra of type
$\mathbb{L}_n$. In \cite{Eu}, the term 'of type T' instead of 'of
type L' is used and different relations are used to present the
algebra. We prove that the algebra has a dualizable basis and is
hence symmetric.  We then give a concrete cochain complex which
computes the Hochschild cohomology (Proposition \ref{cohomology
complex}. The graded Frobenius condition of
$\underline{HH}^*(\Lambda )$ follows from the symmetric and periodic
condition of $\Lambda$. Excepting this result all other results  in
that section are characteristic-free and will be applied in a
forthcoming paper to tackle the case of characteristic $2$. In
Section (4) we explicitly caculate the dimensions of the Hochschild
cohomology and homology spaces, and also those of the cyclic
homology spaces in characteristic zero (Theorem \ref{dimension of
HH^i} and Corollary \ref{cor:cyclic homology spaces}).
  Actually, by identifying the structure of each $HH^i(\Lambda
)$ as a module over $Z(\Lambda )=HH^0(\Lambda )$, we give a
canonical basis of each $HH^i(\Lambda )$ (cf. Proposition
\ref{K-basis for HH^*Lambda1}). The final section (5) studies the
multiplication in $HH^*(\Lambda )$, giving auxiliary results leading
to  the proof of the two main theorems, from which we also derive a
presentation $\underline{HH}^*(\Lambda )$ by generators an relations
(Corollary \ref{cor:presentation:stable cohomology ring}).

\section{Preliminaries}
We will fix an algebraically closed field $K$ all through the paper.
No condition on its characteristic is required in this section, all
through which $\Lambda$ will be a finite dimensional algebra of the
form $\Lambda =KQ/I$, where $Q$ is a finite quiver, $KQ$ is its path
algebra and $I$ is an admissible ideal, i.e., $I$ is generated by
set of linear combinations of paths of length $\geq 2$, called
relations, and $I$ contains all paths of length $m$, for some $m\geq
2$. We shall denote by $Q_0$ and $Q_1$ the sets of vertices and
arrows of $Q$, respectively, and $i(a)$ and $t(a)$ will denote the
origin and the end of a given $a\in Q_1$. For each $i\in Q_0$, we
will denote by $e_i$ the associated idempotent element of $\Lambda$.

After section \ref{subsection.dualizable basis}  we shall
concentrate on the preprojective algebra of type $\mathbb{L}_n$ but,
for the moment, we need some  preliminaries in this general context.
 All through the paper unadorned tensor products are considered over the field $K$. The word 'module' will mean 'left module' unless
otherwise stated and we shall denote by $_\Lambda
\text{Mod}_\Lambda$ the category of $\Lambda -\Lambda -$bimodules
(abbreviated $\Lambda$-bimodules). If
$\Lambda^e:=\Lambda\otimes\Lambda^{op}$ denotes the enveloping
algebra of $\Lambda$ and $M$ is a $\Lambda$-bimodule, then we will
view $M$ either as a left $\Lambda^e$-module, with multiplication
$(a\otimes b^{o})m= amb$, or as a right $\Lambda^e$-module, with
multiplication $m(a\otimes b^{o})=bma$, for all $a,b\in \Lambda$ and
all $m\in M$. In this way, we identify $_{\Lambda^e}Mod=_\Lambda
Mod_\Lambda =Mod_{\Lambda^e}$. Whenever $M$ and $N$ are
$\Lambda$-bimodules, we shall denote by $Hom_{\Lambda^e}(M,N)$ the
corresponding space of morphisms.

\subsection{Equivalences of categories induced by automorphisms}

 It is well-known that, given any automorphism
$\sigma\in Aut(\Lambda )$, each $\Lambda$-module $M$ admits a
twisted version $_{\sigma}M$, where the underlying vector space is
$M$ and the multiplication by elements of $\Lambda$ is given by
$a\cdot m=\sigma (a)m$, for all $a\in\Lambda$ and $m\in M$. It is
also well-known that the assignment $M\rightsquigarrow _\sigma M$
underlines and equivalence of categories (which acts as the identity
on morphisms) $_\Lambda Mod\stackrel{\cong}{\longrightarrow}_\Lambda
Mod$ with a quasi-inverse taking $M\rightsquigarrow
_{\sigma^{-1}}M$.

Suppose now that  $\sigma, \tau\in Aut(\Lambda)$. Then we get an
automorphism of the enveloping algebra,  $\sigma \otimes \tau^{op}:
\Lambda \otimes \Lambda^{op}\longrightarrow \Lambda \otimes
\Lambda^{op}$, which takes  $(a\otimes b^{o}\rightsquigarrow
\sigma(a)\otimes \tau(b)^{o}$. If $M$ is a $\Lambda$-bimodule, which
we view as a  left $\Lambda^e$-module, the previous paragraph gives
a new left $\Lambda^e$-module $_{\sigma \otimes \tau^{op}}M$. In the
usual way, we interpret it as a $\Lambda$-bimodule $
_{\sigma}M_{\tau}$, and then the multiplications by elements of
$\Lambda$ are given by $a\cdot m\cdot b= \sigma(a)m\tau(b)$. In
particular, the assignment $M\rightsquigarrow _\sigma M_\tau$
underlies an equivalence of categories $_\Lambda
Mod_\Lambda\stackrel{\cong}{\longrightarrow}_\Lambda Mod_\Lambda$.

We are specially interested in the case of the previous paragraph
when $\sigma =1_{\Lambda}$, and denote by  $F_\tau :_\Lambda
\text{Mod}_\Lambda\stackrel{\cong}{\longrightarrow}_\Lambda
\text{Mod}_\Lambda$ the equivalence taking $M\rightsquigarrow
_1M_\tau$.  We will need an alternative description of the
selfequivalence of categories induced by $F_\tau$ on the full
subcategory $_\Lambda Proj_\Lambda$ of $_\Lambda \text{Mod}_\Lambda$
consisting of the projective $\Lambda$-bimodules. We still denote by
$F_\tau:_\Lambda
Proj_\Lambda\stackrel{\cong}{\longrightarrow}_\Lambda Proj_\Lambda$
the mentioned selfequivalence.

\begin{lema} \label{lema.equivalencia en bimodulos proyectivos}
Let $\tau\in Aut(\Lambda )$ be an automorphism which fixes the
vertices and consider the  $K$-linear functor $G_\tau :_\Lambda
Proj_\Lambda\longrightarrow_\Lambda Proj_\Lambda$ identified by the
following data:

\begin{enumerate}
\item $G_\tau (P)=P$, for each projective $\Lambda$-bimodule $P$
\item $G_\tau$ preserves coproducts
\item If $f:\Lambda e_i\otimes e_j\Lambda\longrightarrow\Lambda e_k\otimes
e_l\Lambda$ is a morphism in $_\Lambda Proj_\Lambda$, then $f_\tau
:=G_\tau (f)$ is the only morphism of $\Lambda$-bimodules $f_\tau
:\Lambda e_i\otimes e_j\Lambda\longrightarrow\Lambda e_k\otimes
e_l\Lambda$ taking $e_i\otimes e_j\rightsquigarrow\sum_{1\leq r\leq
m}a_r\otimes\tau^{-1}(b_r)$, provided that $f(e_i\otimes
e_j)=\sum_{1\leq k\leq r}a_r\otimes b_r)$.
\end{enumerate}
Then $G_\tau$ is naturally isomorphic to the selfequivalence $F_\tau
=_1(-)_\tau :_\Lambda Proj_\Lambda\longrightarrow_\Lambda
Proj_\Lambda$.
\end{lema}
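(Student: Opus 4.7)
The plan is to construct an explicit natural isomorphism $\eta: G_\tau \Rightarrow F_\tau$ on $_\Lambda Proj_\Lambda$, obtaining at the same time that $G_\tau$ is a functor, as it inherits functoriality from $F_\tau$ through $\eta$.

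On each indecomposable projective bimodule $\Lambda e_i\otimes e_j\Lambda$ I would set $\eta_{\Lambda e_i\otimes e_j\Lambda}(a\otimes b):= a\otimes \tau(b)$, with inverse given by $a\otimes b\rightsquigarrow a\otimes \tau^{-1}(b)$. Because $\tau$ fixes the vertices, $\tau(e_j)=e_j$, so the image still lies in $\Lambda e_i\otimes e_j\Lambda$ as a set. The key verification is that this map is a morphism of $\Lambda$-bimodules from $G_\tau(\Lambda e_i\otimes e_j\Lambda)=\Lambda e_i\otimes e_j\Lambda$ (ordinary structure) to $F_\tau(\Lambda e_i\otimes e_j\Lambda)={}_1(\Lambda e_i\otimes e_j\Lambda)_\tau$ (twisted structure). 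Denoting the twisted actions by $\cdot$, the identity $c\cdot (a\otimes\tau(b))\cdot d = ca\otimes\tau(b)\tau(d)= ca\otimes\tau(bd)=\eta(c(a\otimes b)d)$ does the job. Extending by coproducts yields $\eta_P$ for every projective bimodule $P$.

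For naturality it suffices to test on a morphism $f:\Lambda e_i\otimes e_j\Lambda\to\Lambda e_k\otimes e_l\Lambda$, since $F_\tau$, $G_\tau$ and $\eta$ all preserve coproducts. Writing $f(e_i\otimes e_j)=\sum_r a_r\otimes b_r$, the definition of $G_\tau$ gives $G_\tau(f)(e_i\otimes e_j)=\sum_r a_r\otimes\tau^{-1}(b_r)$, and applying $\eta_Q$ (with $Q:=\Lambda e_k\otimes e_l\Lambda$) produces $\sum_r a_r\otimes b_r$. On the other side, $\eta_P(e_i\otimes e_j)=e_i\otimes e_j$ thanks to $\tau(e_j)=e_j$, and $F_\tau(f)$ agrees with $f$ as a map of underlying sets, so $(F_\tau(f)\circ\eta_P)(e_i\otimes e_j)=f(e_i\otimes e_j)=\sum_r a_r\otimes b_r$ as well. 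Both compositions are $\Lambda$-bimodule maps from $\Lambda e_i\otimes e_j\Lambda$ to $F_\tau(Q)$ and they coincide on the bimodule generator $e_i\otimes e_j$, hence they coincide.

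No conceptually hard step is expected; the only pitfall is bookkeeping, namely, keeping track of which bimodule structure (twisted or untwisted) is at hand in each equation, and invoking $\tau(e_i)=e_i$ at the right moments so that, e.g., $\tau^{-1}(b_r)\in e_l\Lambda e_j$ whenever $b_r\in e_l\Lambda e_j$, ensuring that the formula for $G_\tau(f)$ does land in $\Lambda e_k\otimes e_l\Lambda$. Once the above bimodule check and the generator computation are in place, $K$-linearity of $G_\tau$ on morphism spaces, compatibility with composition and preservation of identities are inherited from the corresponding properties of $F_\tau$ through the natural isomorphism $\eta$, so no separate functoriality verification is needed.
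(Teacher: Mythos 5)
Your proposal is correct and follows essentially the same route as the paper: define the isomorphism on an indecomposable $\Lambda e_i\otimes e_j\Lambda$ by $a\otimes b\rightsquigarrow a\otimes\tau(b)$, extend over coproducts, and verify naturality by evaluating both composites on the bimodule generator $e_i\otimes e_j$. Your extra remark that functoriality of $G_\tau$ can be read off from the conjugation formula $G_\tau(f)=\eta_Q^{-1}\circ F_\tau(f)\circ\eta_P$ is a slightly more explicit justification of what the paper dismisses as "clear," but the substance is identical.
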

\begin{proof}
It is clear that the given conditions determine a unique $K$-linear
functor $G_\tau :_\Lambda Proj_\Lambda\longrightarrow_\Lambda
Proj_\Lambda$ since each projective $\Lambda$-bimodule is a
coproduct of bimodules of the form $\Lambda e_i\otimes e_j\Lambda$.
In order to give the desired natural isomorphism $\psi
:G_\tau\stackrel{\cong}{\longrightarrow}F_\tau$ it will be enough to
define it on the indecomposable  projective $\Lambda$-bimodules
$P=\Lambda e_i\otimes e_j\Lambda$. Indeed, for such a $P$, we define
$\psi_P:G_\tau (P)=P\longrightarrow _1P_\tau =F_\tau (P)$ by the
rule $\psi_P(a\otimes b)=a\otimes\tau (b)$. It is clear that
$\psi_P$ is an isomorphism of $\Lambda$-bimodules. Finally, it is
routinary to see that if $f:P=\Lambda e_i\otimes
e_j\Lambda\longrightarrow Q=\Lambda e_k\otimes e_l\Lambda$ is a
morphism between indecomposable objects of $_\Lambda Proj_\Lambda$,
then

$$F_\tau (f)\circ\psi_P=f\circ\psi_P=\psi_Q\circ f_\tau
=\psi_Q\circ G_\tau (f),$$

which shows that the $\psi_P$ define a natural isomorphism $\psi
:G_\tau\stackrel{\cong}{\longrightarrow}F_\tau $ as desired.
\end{proof}

\subsection{The Yoneda product of extensions}

For the convenience of the reader we recall the definition of
$HH^*(\Lambda)$ and of the Yoneda product. By classical theory of
derived functors, for each pair $M,N$ of $\Lambda$-modules, one can
compute the $K$-vector space $Ext_\Lambda^n(M,N)$ of $n$-extensions
as the $n$-th cohomology space of the complex $Hom_\Lambda
(P^\bullet ,N)$, where

$$P^\bullet :...P^{-n-1}\stackrel{d^{-n-1}}{\longrightarrow}P^{-n}\longrightarrow...\longrightarrow
P^{-1}\stackrel{d^{-1}}{\longrightarrow}P^0\twoheadrightarrow
M\rightarrow 0$$ is a projective resolution of $M$.

Suppose that $L,M,N$ are $\Lambda$-modules, that $P^\bullet$ and
$Q^\bullet$ are projective resolutions of $L$ and $M$, respectively,
and that $m,n$ are natural numbers. If $\delta\in
Ext_\Lambda^n(L,M)$ and $\epsilon\in Ext_\Lambda^m(M,N)$, then we
can choose a  $\tilde{\delta}\in Hom_\Lambda (P^{-n},M)$, belonging
to the kernel of the transpose map $(d^{-n-1})^*:Hom_\Lambda
(P^{-n},M )\longrightarrow Hom_\Lambda (P^{-n-1},M )$ of the
differential $d^{-n-1}:P^{-n-1}\longrightarrow P^{-n}$ of
$P^\bullet$, which represents $\delta$. Similarly we can choose an
$\tilde{\epsilon}\in Hom_\Lambda (Q_m,N)$ which represents
$\epsilon$. Due to the projectivity of the $P^i$, there is a
non-unique sequence of morphisms of $\Lambda$-modules
$\delta^{-k}:P^{-n-k}\longrightarrow Q^{-k}$ ($k=0,1,...,$) making
commute the following diagram

\vspace{0.3cm}

$$\xymatrix{\cdots \ar[r] & P^{-n-k} \ar[r] \ar[d]^{\delta^{-k}} & \cdots \ar[r] &
P^{-n-1}\ar[r] \ar[d]^{\delta^{-1}} & P^{-n}  \ar[r]
\ar[d]^{\delta^0} \ar[dr]^{\widetilde{\delta}}& \cdots \ar[r] & P^0
 \ar[r] & L \ar[r] & 0\\
\cdots \ar[r] & Q^{-k} \ar[r] & \cdots \ar[r] & Q^{-1} \ar[r] & Q^0
\ar[r] & M \ar[r] & 0 & &}
$$

\vspace{0.3cm}

Then the composition
$\tilde{\epsilon}\circ\delta^{-m}:P^{-m-n}\longrightarrow N$ is in
the kernel of $(d^{-m-n})^*$ and, thus, it represents an element of
$Ext_\Lambda^{m+n}(L,N)$. This element is denoted by
$\epsilon\delta$ and does not depend on the choices made. It is
called the \emph{Yoneda product} of $\epsilon$ and $\delta$. It is
well-known that the map

\begin{center}
$Ext_\Lambda^m(M,N)\times Ext_\Lambda^n(L,N)\longrightarrow
Ext_\Lambda^{m+n}(L,N)$ \hspace*{0.5cm} ($(\epsilon ,\delta
)\rightsquigarrow \epsilon\delta$)
\end{center}
is $K$-bilinear.

When $M=N$ in the above setting, the vector space
$Ext_\Lambda^*(M,M)=\oplus_{i\geq 0}Ext_\Lambda ^i(M,M)$ inherites a
structure of graded $K$-algebra, where the multiplication of
homogeneous elements is the Yoneda product. In this paper we are
specifically interested in a particular case of this situation.
Namely, when we replace $\Lambda$ by its enveloping algebra
$\Lambda^e$ and replace $M$ by $\Lambda$, viewed as
$\Lambda^e$-module (i.e. as a $\Lambda$-bimodule). Then
$HH^i(\Lambda ):=Ext_{\Lambda^e}^i(\Lambda ,\Lambda )$ is called the
\emph{i-th Hochschild cohomology space}, for each $i\geq 0$. The
corresponding graded algebra $Ext^*_{\Lambda^e}(\Lambda ,\Lambda )$
is denoted by $HH^*(\Lambda )$ and called the \emph{Hochschild
cohomology ring (or algebra)} of $\Lambda$. By a celebrated result
of Gerstenhaber (\cite{G}), we know that $HH^*(\Lambda )$ is graded
commutative. That is if $\epsilon\in HH^i(\Lambda )$ and $\delta\in
HH^j(\Lambda )$ are homogeneous elements then $\epsilon\delta
=(-1)^{ij}\delta\epsilon$.

\subsection{Some facts about self-injective algebras}
In this paragraph we assume $\Lambda$ to be self-injective. It is
well-known (see section 2 in \cite{BES} and \cite{HZ}) that there is
an automorphism $\eta\in Aut(\Lambda )$, called the \emph{Nakayama
automorphism} and uniquely determined up to inner automorphism, such
that $D(\Lambda)$ is isomorphic to the twisted bimodule
$_1\Lambda_\eta$. The automorphism $\eta$ is also identified by the
fact that the Nakayama functor

$$ DHom_\Lambda (-,\Lambda ):_\Lambda Mod\longrightarrow _\Lambda Mod$$
is naturally isomorphic to the self-equivalence of $_\Lambda Mod$
which takes $M\rightsquigarrow _{\eta^{-1}}M$.

The automorphism $\eta$ can be chosen to permute the vertices of $Q$
and the corresponding permutation $\nu$ of $Q_0$ is called the
\emph{Nakayama permutation}. This permutation is identified by the
fact that $Soc(e_i\Lambda )$ is the simple right module $S_{\nu
(i)}$ associated to the vertex $\nu (i)$, equivalently, by the fact
that $Soc(\Lambda )\cap e_i\Lambda e_{\nu (i)}\neq 0$.

Any isomorphism
$f:_1\Lambda_\eta\stackrel{\cong}{\longrightarrow}D(\Lambda )$
yields a nondegenerate, but not necessarily symmetric, bilinear form
 $(-,-):\Lambda\times\Lambda\longrightarrow K$ defined by
 $(a,b)=f(b)(a)$. Then one gets
 $(ac,b)=(a,cb)$ for all $a,b,c\in\Lambda$. A bilinear
 form $(-,-):\Lambda\times\Lambda\longrightarrow K$ such that
 $(ac,b)=(a,cb)$, for all $a,b,c\in\Lambda$, will be called a \emph{Nakayama  form}.
It always comes from a Nakayama automorphism of $\Lambda$ in the
just described way. To see that, note that
$bf(1)=f(b)=f(1)\eta^{-1}(b)$, and so

$$(a,b)=f(b)(a)=(bf(1))(a)=(f(1)\eta^{-1}(b))(a)=f(1)(\eta^{-1}(b)a)=(af(1))(\eta^{-1}(b))=f(a)(\eta^{-1}(b))=(\eta^{-1}(b),a),$$
for all $a,b\in\Lambda$. This tells us that we recuperate $\eta$
from $(-,-)$ by the rule that $(a,b)=(\eta^{-1}(b),a)$, using the
nondegeneracy condition on $(-,-)$.

 Given any basis $B$ of
 $\Lambda$, one  obtains a (right) dual basis $B^*=\{b^*:$ $b\in B\}$
 identified by the property that $(b,c^*)=\delta_{bc}$, for all $b,c\in
 B$.

 The following result shows that essentially all Nakayama forms can be
 constructed from suitable bases of $\Lambda$ in particular way.

 \begin{prop} \label{prop.Nakayama form from basis}
 Let $\Lambda$ be a selfinjective algebra, let $(-,-):\Lambda\times\Lambda\longrightarrow
 K$ be a bilinear form and consider the following assertions:

 \begin{enumerate}
 \item $(-,-)$ is a Nakayama form
 \item There is a basis $B=\bigcup_{i,j\in Q_0}e_iBe_j$ of $\Lambda$
 containing the vertices and a basis $\{w_i:$ $i\in Q_0\}$ of $\text{Soc}(\Lambda
 )$ such that $(x,y)=\sum_{i\in
 Q_0}\lambda_i$ for all $x,y\in\Lambda$, where $\lambda_i$ is the coefficient of $w_i$ in the expression of
$xy$ as a linear combination of the elements of
  $B$.
 \end{enumerate}
 Then $2)\Longrightarrow 1)$ and, in case $(e_i,e_i)=0$ for all $i\in
 Q_0$, the inverse implication is also true.
 \end{prop}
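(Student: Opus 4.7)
Both directions will hinge on expressing the form $(-,-)$ as $(x,y)=\phi(xy)$ for some linear form $\phi:\Lambda\to K$. In the direction $2)\Rightarrow 1)$ one simply sets $\phi(z):=\sum_{i\in Q_0}\lambda_i(z)$, with $\lambda_i(z)$ the coefficient of $w_i$ in the $B$-expansion of $z$; the associativity identity $(ac,b)=\phi(acb)=(a,cb)$ is then immediate. Nondegeneracy is the only real task, and I would argue as follows. Since $\Lambda$ is Artinian, every nonzero right ideal contains a simple right submodule, so it suffices to check that $\phi$ does not vanish on any such submodule. The implicit condition $\{w_i\}\subseteq B$ together with $B=\bigcup_{i,j}e_iBe_j$ forces each $w_i$ to sit in a single block $e_{\alpha(i)}\Lambda e_{\beta(i)}$, and since $\{w_i\}$ is a basis of $\text{Soc}(\Lambda)=\bigoplus_j\text{Soc}(\Lambda e_j)$ whose summands are one-dimensional, a short argument shows that, after relabeling, each simple right submodule $\text{Soc}(e_k\Lambda)$ has the form $Kw_{j(k)}$ for a unique $j(k)$. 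On each of these $\phi$ takes the value $1$.

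For the converse $1)\Rightarrow 2)$, the plan is to construct $B$ and $\{w_j\}$ block-by-block, starting from $\phi(z):=(1,z)$ (so that $(x,y)=(1\cdot x,y)=\phi(xy)$ by associativity). The key preliminary step is to use the Nakayama automorphism $\eta$ and the identity $(a,b)=(\eta^{-1}(b),a)$ recalled just before the statement to prove that $\phi|_{e_i\Lambda e_j}\equiv 0$ unless $i$ equals the image of $j$ under the Nakayama permutation $\pi$; this drops out of $\phi(a)=(e_i,a)=(\eta^{-1}(a),e_i)$ for $a\in e_i\Lambda e_j$. Next I would pick a generator $w_j$ of the one-dimensional space $\text{Soc}(\Lambda e_j)\subseteq e_{\pi(j)}\Lambda e_j$ and rescale so that $\phi(w_j)=1$, which is possible because nondegeneracy of $(-,-)$ forces $\phi|_{\text{Soc}(\Lambda e_j)}$ to be nonzero. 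Finally, on each off-diagonal block ($i\neq\pi(j)$) any basis of $e_i\Lambda e_j$ will do, and on the Nakayama-diagonal block $e_{\pi(j)}\Lambda e_j$ I would take $w_j$ together with a basis of the hyperplane $\ker(\phi|_{e_{\pi(j)}\Lambda e_j})$, arranging that $e_j$ lie in this hyperplane whenever $\pi(j)=j$.

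The main obstacle lives exactly at the fixed points of $\pi$: when $\pi(j)=j$, both $e_j$ and $w_j$ belong to $e_j\Lambda e_j$ and the construction demands $\phi(e_j)=0$. Since $\phi(e_j)=(e_j,e_j)$, the extra hypothesis $(e_i,e_i)=0$ is precisely what removes this obstruction, and it is not needed elsewhere. With such a $B$ and $\{w_j\}$, every element of $B\setminus\{w_i:i\in Q_0\}$ lies in $\ker\phi$ while $\phi(w_j)=1$ for each $j$, so $\phi(z)$ equals the sum of the $w_j$-coefficients of $z$ in its $B$-expansion, which yields the formula for $(-,-)$ described in 2).
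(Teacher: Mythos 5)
Your argument for $2)\Rightarrow 1)$ is essentially the paper's: associativity is immediate from $(x,y)=\phi(xy)$, and nondegeneracy comes from the fact that the socle meets every nonzero one-sided ideal. One point to tighten: you conclude by checking $\phi$ on the canonical summands $\text{Soc}(e_k\Lambda)=Kw_{j(k)}$, but a simple right submodule of $\Lambda$ is a priori allowed to be a diagonal copy inside $\text{Soc}(e_k\Lambda)\oplus\text{Soc}(e_l\Lambda)$, for instance $K(w_j-w_{j'})$, on which $\phi$ vanishes. This is excluded because the $\text{Soc}(e_k\Lambda)\cong S_{\nu(k)}$ are pairwise non-isomorphic ($\nu$ being a permutation), so the right socle is multiplicity-free; alternatively, reduce first to $x=e_ix$, as the paper does, so that the only available simple submodule is the one-dimensional $\text{Soc}(e_i\Lambda)$. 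You should also record that nondegeneracy in the other variable follows dually via left ideals.

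The direction $1)\Rightarrow 2)$ has a genuine gap at your key preliminary step, the claim that $\phi$ vanishes on $e_i\Lambda e_j$ unless $i=\pi(j)$. The identity $(a,b)=(\eta^{-1}(b),a)$ yields this only if the automorphism $\eta$ attached to the \emph{given} form sends each $e_j$ to a vertex idempotent; but $\eta$ is pinned down by the form on the nose, not merely up to inner automorphism, and it may send $e_j$ to a conjugate idempotent. Concretely, on the self-injective Nakayama algebra with quiver $1\rightleftarrows 2$ (arrows $\alpha,\beta$) and all paths of length $3$ zero, set $(x,y)=\phi(xy)$ with $\phi(\alpha\beta)=\phi(\beta\alpha)=\phi(\alpha)=1$ and $\phi(e_1)=\phi(e_2)=\phi(\beta)=0$. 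This is an associative nondegenerate form with $(e_i,e_i)=0$ and $\nu=\mathrm{id}$, yet $\phi(\alpha)\neq 0$ with $\alpha\in e_1\Lambda e_2$; here one computes $\eta^{-1}(e_2)=e_2+\beta$. Assertion $2)$ actually fails for this form, since the unique basis vector of $e_1\Lambda e_2$ is a nonzero multiple of $\alpha$, is not a socle element, and pairs nontrivially with $e_1$. So the implication requires the extra hypothesis that the form pairs $e_i\Lambda e_j$ only with $e_j\Lambda e_{\nu^{-1}(i)}$ (equivalently, that its Nakayama automorphism permutes the vertex idempotents). To be fair, the paper's own proof hides the same assumption: it picks ``any basis of $W^\perp$ contained in $\bigcup_{i,j}e_i\Lambda e_j$'', and in the example above $W^\perp$ admits no such basis. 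Granting that block condition, your hyperplane-by-hyperplane construction is correct, and your observation that $(e_j,e_j)=0$ is only genuinely needed at fixed points of the Nakayama permutation is a nice refinement of the paper's argument.
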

 \begin{proof}
$2)\Longrightarrow 1)$ Fix a basis $B$ as in assertion 2, where,
without loss of generality, we assume that $w_i=e_iw_ie_{\nu (i)}$,
for all $i\in Q_0$
 , where $\nu$ is the Nakayama permutation. Then we
 clearly have $(xa,y)=(x,ay)$, for all $a,x,y\in\Lambda$. Suppose
 now that $x\in\Lambda$ is an element such that $(x,y)=0$, for all
 $y\in\Lambda$. Then one also has $(e_ix,y)=0$, for all
 $y\in\Lambda$, so that, in order to prove the (left) nondegeneracy
 of $(-,-)$, we can assume that $x=e_ix$ for a (unique) $i\in Q_0$.
 Since $Soc(\Lambda )$ intersects nontrivially any nonzero (left or
 right) ideal, in case $x\neq 0$, we have $0\neq x\Lambda\cap Soc(\Lambda )\subseteq e_iSoc(\Lambda )=e_iSoc(\Lambda )e_{\nu
 (i)}$. Then there is $y\in\Lambda$ such that $xy=\lambda w_i$, with
 $0\neq \lambda\in K$.
 This  implies that $(x,y)=\lambda\neq 0$, which is a contradiction.
 Therefore $(-,-)$ is left nondegenerate and right
 nondegeneracy follows dually.

$1)\Longrightarrow 2)$ We know that the map
$f:\Lambda\longrightarrow D(\Lambda
 )$, given by $f(b)=(-,b):a\rightsquigarrow (a,b)$, is an
 isomorphism of left $\Lambda$-modules. Moreover, we have
 $(a,b)=[af(b)](1)$. If now $b\in Soc(\Lambda )$ and $a\in J(\Lambda
 )$, then $f(b)\in Soc(_\Lambda D(\Lambda ))$ and so $af(b)=0$, and hence $(a,b)=0$. It
 follows that $J(\Lambda )\subseteq ^\perp Soc(\Lambda )$ and a
 comparison of dimensions, using the nondegeneracy of $(-,-)$,  gives that $J(\Lambda )=^\perp Soc(\Lambda
 )$. By a  'symmetric' argument, one gets that  $Soc(\Lambda )^\perp =J(\Lambda
 )$. It follows that $(-,-)$ defines by restriction nondegenerate
 bilinear forms

 \begin{center}
 $(-,-):KQ_0\times Soc(\Lambda )\longrightarrow K$

 $(-,-):Soc(\Lambda )\times KQ_0\longrightarrow K$
 \end{center}
 If we denote by $\{e_i^*:$ $i\in Q_0\}$ and $\{^*e_i :$ $i\in Q_0\}$
 the right and left dual basis of $Q_0\equiv\{e_i:$ $i\in Q_0\}$,
 respectively, with respect to the these nondegenerate forms
 $w_i=e_i$. The equality $(a,b)=(\eta^{-1}(b),a)$, where $\eta$ is
 the Nakayama automorphism, implies that $e^*_i=^*e_{\nu (i)}$, for
 all $i\in Q_0$. Therefore, up to the ordering of its elements, the
 left and the right
 dual bases of $Q_0$ coincide. We put in the sequel $w_i=e_i^*$, for
 each $i\in Q_0$, and $B'=\{w_i:$ $i\in Q_0\}$ is the basis of $Soc(\Lambda
 )$ that we choose. We next put $W:=KQ_0\oplus Soc(\Lambda )$ and
 claim that $\Lambda =W\oplus W^\perp$. Due to the nondegeneracy of $(-,-)$, it  will be enough to
 prove that $W\cap W^\perp =0$. Indeed if $v\in W\cap W^\perp$ and we
 write it as $v=u+w$, with $u\in KQ_0$ and $w\in Soc(\Lambda )$,
 then we have $0=(u+w,w_i)=(u,w_i)$, for all $i\in Q_0$. The
 nondegeneracy of $(-,-):KQ_0\times Soc(\Lambda )\longrightarrow K$
 gives that $u=0$, and hence $u=w$.
 But then $w\in W^\perp\subseteq (KQ_0)^\perp$, which contradicts the
 nondegeneracy of $(-,-):KQ_0\times Soc(\Lambda )\longrightarrow K$,
 and thus settles our claim.

 Our desired basis $B$ will be the union (in this order) of  the basis $Q_0\equiv \{e_i:$ $i\in
 Q_0\}$ of $KQ_0$,  of any basis of $W^\perp$
 contained in $\bigcup_{i,j\in Q_0}e_i\Lambda e_j$ and of the basis $\{w_i:$ $i\in
 Q_0\}$ of $Soc(\Lambda )$.

It only remains to prove that if $x,y\in B$, then $(x,y)$ is as in
the statement. In case $x=e_i$, we have that $(x,y)=(e_i,y)=0$
unless $y=w_i$, and clearly $(x,y)$ is the sum of the coefficients
of the $w_j$ in $xy$. In case $x\not\in Q_0$, then we have
$(x,y)=(e_i,xy)$ for $(-,-)$ is a Nakayama form. But if
$xy=\sum_{b\in B}\lambda_bb$, with $\lambda_b\in K$, then
$(e_i,xy)=\sum_{b\in B}\lambda_b(e_i,b)$ and the result follows from
the case $x=e_i$ already studied.
\end{proof}

\begin{defi} \label{defi:associated Nak form and dualizable} \rm
Let $B$ be a basis of $\Lambda$
 such that $B=\bigcup_{i,j\in Q_0}e_iBe_j$, $B$  contains the vertices  of $Q$ and  contains a basis $\{w_i:$ $i\in Q_0\}$
 of $Soc(\Lambda
 )$, with $w_i\in e_i\Lambda$ for each $i\in Q_0$. The Nakayama
 form $(-,-):\Lambda\times\Lambda\longrightarrow K$ given by the above
 proposition
is called the \emph{Nakayama form associated to $B$}.

The basis $B$ will be called \emph{dualizable} when its associated
Nakayama form is symmetric.
\end{defi}

The following is a useful property:

\begin{lema} \label{Nakayama aut}Let $\Lambda$ be a self-injective algebra with $\eta$ as
 Nakayama automorphism. The following assertions hold:

\begin{enumerate}

    \item $\Lambda^e= \Lambda \otimes \Lambda^{op}$ is a
    self-injective algebra with Nakayama automorphism $\eta\otimes
    (\eta^{-1})^o$.

    \item $M^*:= Hom_{\Lambda^e}(M, \Lambda^e)$ is isomorphic to $_{\eta}D(M)_{\eta^{-1}}$, for each $\Lambda$-bimodule M, and the
    isomorphism is natural on $M$.

    \item $Hom_{\Lambda^e}(\Lambda ,\Lambda^e)\cong _1\Lambda_{\eta^{-1}}$.

\end{enumerate}

\end{lema}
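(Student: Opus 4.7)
The plan is to treat the three items in order, building each one from the previous, with all the work essentially reducing to a careful bookkeeping of twists of bimodules.

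For (1), I would first observe that $\Lambda^e = \Lambda \otimes \Lambda^{op}$ is a tensor product of two finite-dimensional selfinjective algebras, hence selfinjective; a Nakayama automorphism is obtained as the tensor product of Nakayama automorphisms of the factors. The nontrivial point is to identify the Nakayama automorphism of $\Lambda^{op}$. For this, I would start from a Nakayama form $(-,-)$ on $\Lambda$ associated to $\eta$ (so $(a,b) = (\eta^{-1}(b),a)$) and set $(a^o,b^o)_{op} := (b,a)$. A direct check shows this is a Nakayama form on $\Lambda^{op}$, and comparing $(a^o,b^o)_{op}$ with $(\zeta^{-1}(b^o),a^o)_{op}$ forces $\zeta = (\eta^{-1})^o$. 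Combining via $D(\Lambda^e) \cong D(\Lambda) \otimes D(\Lambda^{op}) \cong {}_1 \Lambda_\eta \otimes {}_1 \Lambda^{op}_{(\eta^{-1})^o} \cong {}_1(\Lambda^e)_{\eta \otimes (\eta^{-1})^o}$ yields the claim.

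For (2), I would exploit the characterization of the Nakayama automorphism via the Nakayama functor: for any selfinjective algebra $A$ with Nakayama automorphism $\nu$, there is a natural isomorphism $DHom_A(-,A) \cong {}_{\nu^{-1}}(-)$ on left $A$-modules, whence $Hom_A(M,A) \cong D({}_{\nu^{-1}}M)$. Applying this to $A = \Lambda^e$ with $\nu = \eta \otimes (\eta^{-1})^o$ from (1), we get $Hom_{\Lambda^e}(M,\Lambda^e) \cong D({}_{\eta^{-1} \otimes \eta^o} M)$ naturally in $M$. Using the paper's convention identifying left $\Lambda^e$-modules with $\Lambda$-bimodules, the twist ${}_{\eta^{-1} \otimes \eta^o} M$ corresponds to the bimodule ${}_{\eta^{-1}}M_\eta$. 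A routine calculation from the formula $(a \cdot f \cdot b)(m) = f(bma)$ shows that for arbitrary $\sigma,\tau \in Aut(\Lambda)$ one has $D({}_\sigma M_\tau) \cong {}_\tau D(M)_\sigma$ as bimodules, so finally $Hom_{\Lambda^e}(M,\Lambda^e) \cong {}_\eta D(M)_{\eta^{-1}}$.

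For (3), I would simply specialize (2) to $M = \Lambda$ and substitute $D(\Lambda) \cong {}_1\Lambda_\eta$. Using the composition rule ${}_\sigma({}_\alpha N_\beta)_\tau \cong {}_{\alpha\sigma}N_{\beta\tau}$, which follows immediately from the definitions, I get ${}_\eta D(\Lambda)_{\eta^{-1}} \cong {}_\eta({}_1\Lambda_\eta)_{\eta^{-1}} \cong {}_\eta\Lambda_1$. Finally, the map $x \mapsto \eta^{-1}(x)$ gives a bimodule isomorphism ${}_\eta\Lambda_1 \cong {}_1\Lambda_{\eta^{-1}}$, as one verifies by expanding both actions.

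The main obstacle is nothing deep but rather the danger of a sign/direction error in the twisting conventions: the composition rule for twists, the behavior of $D$ on a twisted bimodule, and the passage between left $\Lambda^e$-modules ${}_{\sigma \otimes \tau^o}M$ and bimodules ${}_\sigma M_\tau$ all have to be lined up consistently with those fixed earlier in the paper. Once those conventions are pinned down, each step is a short direct verification.
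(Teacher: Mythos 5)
Your proposal is correct and follows essentially the same route as the paper: tensor product of Nakayama forms plus the identity $(a,b)=(\eta^{-1}(b),a)$ to identify the Nakayama automorphism of $\Lambda^{op}$ for (1), the Nakayama functor $DHom_{\Lambda^e}(-,\Lambda^e)\cong {}_{\nu^{-1}}(-)$ combined with the duality rule for twisted bimodules for (2), and specialization plus $D(\Lambda)\cong {}_1\Lambda_\eta$ and composition of twists for (3). The only cosmetic difference is that you phrase the duality step as $D({}_\sigma M_\tau)\cong {}_\tau D(M)_\sigma$ directly, whereas the paper passes through the one-sided identity $D({}_\sigma N)\cong D(N)_\sigma$ and then converts right $\Lambda^e$-actions to bimodule actions; both yield the same bookkeeping.
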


\begin{proof}

1) It is well-known that the tensor product of self-injective
algebras is again self-injective. Moreover, if $A$ and $B$ are
self-injective algebras, then the map

$$<-,->: (A\otimes B)\times (A\otimes B)\longrightarrow K$$ given by
$<a\otimes b, a'\otimes b'>= (a,a')(b,b')$ is a Nakayama form for
$A\otimes B$, from which it easily follows that $\eta_{A}\otimes
\eta_{B}$ is a Nakayama automorphism for $A\otimes B$.

We just need to check now that $(\eta^{-1})^o$ is a Nakayama
automorphism for $\Lambda^{op}$. But note that we have an obvious
Nakayama form $<-,->:\Lambda^{op}\times\Lambda^{op}\longrightarrow
K$ defined by the rule $<x^o,y^o>=(y,x)$, where the second member of
the equality is given by a fixed Nakayama form $(-,-)$ of $\Lambda$.
Then we get:

 $$<\eta(b)^o,
a^o>= (a, \eta(b))=(\eta^{-1}(\eta(b)),a)= (b,a)=<a^o,b^o>$$ from
which it follows that the assignment $b^o\rightsquigarrow
\eta^{-1}(b)^o$ is a Nakayama automorphism for $\Lambda^{op}$.

 We have that $DHom_{\Lambda^e}(-, \Lambda^e)$ is the Nakayama
functor $\Lambda^e-mod\longrightarrow \Lambda^e-mod$. It follows
that we have natural isomorphisms
$DHom_{\Lambda^e}(-,\Lambda^e)\cong
_{[{\eta\otimes(\eta^{-1})^o]}^{-1}}(-)= _{\eta^{-1}\otimes
\eta^o}(-)\cong _{\eta^{-1}}(-)_{\eta}$ of functors $_\Lambda
Mod_\Lambda\longrightarrow _\Lambda Mod_\Lambda$. We then get
$Hom_{\Lambda^e}(M, \Lambda^e)\cong D(_{\eta^{-1}}M_{\eta})\cong
D(_{\eta^{-1}\otimes \eta^0}M)= D(M)_{\eta^{-1}\otimes\eta^0}$ (it
is easy to see that if $A$ is an algebra and $\sigma\in Aut(A)$,
then $D(_{\sigma}N)\cong D(N)_{\sigma}$ for each left $A$-module
$N$).

But the right structure of $\Lambda^e$-module on a
$\Lambda$-bimodule $X$ is given by $x(a\otimes b^0)= bxa$. It
follows that $D(M)_{\eta^{-1}\otimes \eta^0}=
_{\eta}D(M)_{\eta^{-1}}$.

 By assertion $2$, we have $Hom_{\Lambda^e}(\Lambda, \Lambda^e)
\cong _{\eta}D(\Lambda)_{\eta^{-1}}$. But $D(\Lambda)\cong
_1\Lambda_{\eta}$ and then we have
$_{\eta}D(\Lambda)_{\eta^{-1}}\cong _{\eta}\Lambda_1\cong
_1\Lambda_{\eta^{-1}}$.

\end{proof}

We look now at the case when $I$ is a homogeneous ideal of $KQ$
 with respect the length grading of $KQ$. In such case we get an induced grading on $\Lambda$
 in the obvious way. We shall call it the \emph{natural grading}.
 The following lemma gives a handy criterion for a basis to be
 dualizable.

\begin{lema} \label{lema:basis for graded-Schurian algebras}
Let $\Lambda= \frac{KQ}{I}$ be a graded self-injective algebra such
that its Nakayama permutation is the identity and $dim(e_i\Lambda_n
e_j)\leq 1$, for all $i,j\in Q_0$ and all natural numbers $n$. Let
$B$ be a basis of $\Lambda$ consisting of paths and negative paths
and containing the vertices and a basis  of $Soc(\Lambda)$. If
$(-,-):\Lambda\times\Lambda\longrightarrow K$ is the Nakayama form
associated to $B$, then the following assertions are equivalent:

\begin{enumerate}

    \item $a^*a= \omega_{t(a)}$, for all $a\in Q_1$.

    \item $b^{**}=b$, for each $b\in B$.

    \item $(-,-)$ is symmetric, i.e., $B$ is a dualizable basis.

\end{enumerate}

\end{lema}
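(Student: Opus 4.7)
The plan is to establish the cyclic implications $(3)\Rightarrow(1)\Rightarrow(2)\Rightarrow(3)$. A preliminary used throughout: the graded Schurian hypothesis makes each $e_i\Lambda_n e_j$ at most one-dimensional, so (once one verifies that all $\omega_i$ share a common degree $d$, which follows from the arrow case of (1) or from nondegeneracy applied to arrows) every $b\in e_iBe_j$ of degree $n$ has a unique basis partner $c_b\in e_jBe_i$ of degree $d-n$, and $b^*=\lambda_b c_b$ with $\lambda_b=(b,c_b)^{-1}$. Moreover, the only nonzero pairings of basis elements are $(b,c_b)$ and $(c_b,b)$. Implication $(3)\Rightarrow(1)$ is then immediate: symmetry gives $(a^*,a)=(a,a^*)=1$, so the coefficient of $\omega_{t(a)}$ in $a^*a\in K\omega_{t(a)}$ is $1$. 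For $(2)\Leftrightarrow(3)$, condition (2) is equivalent, via the dual basis property, to $(b^*,c)=\delta_{bc}=(c,b^*)$ for all $b,c\in B$, which is symmetry of $(-,-)$ on $B\times B^*$ and hence on $\Lambda\times\Lambda$ by bilinearity (since $B^*$ is a basis); the converse is analogous.

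The main work is $(1)\Rightarrow(2)$, argued by induction on the length of $b\in B$, using the equivalence $b^{**}=b\iff (c_b,b)=(b,c_b)$. Base cases $b=e_i$ and $b=\omega_i$ are trivial, and for $b=a\in Q_1$ condition (1) gives $a^*a=\omega_{t(a)}$, i.e.\ $(c_a,a)=\lambda_a^{-1}=(a,c_a)$. For the inductive step, write $b=a\tilde b$ with $a\in Q_1$ and $\tilde b\in B$ of length $n-1$, and set $i_0=i(a)$, $i_1=t(a)$, $i_n=t(b)$. By the Schurian condition, $\tilde b c_b=\alpha c_a$ and $c_b a=\beta c_{\tilde b}$ for unique scalars $\alpha,\beta$. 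Using condition (1) for $a$ and the inductive hypothesis $(\tilde b,c_{\tilde b})=(c_{\tilde b},\tilde b)=:\gamma$, the associativity identity
\[
\alpha\lambda_a^{-1}\omega_{i_1}=(\tilde b c_b)a=\tilde b(c_b a)=\beta\gamma\,\omega_{i_1}
\]
forces $\alpha\lambda_a^{-1}=\beta\gamma$; direct computation then yields $bc_b=\alpha\lambda_a^{-1}\omega_{i_0}$ and $c_b b=\beta\gamma\,\omega_{i_n}$, so $(b,c_b)=\alpha\lambda_a^{-1}=\beta\gamma=(c_b,b)$, completing the induction.

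The main obstacle is the inductive bookkeeping: one must justify that every $b\in B$ of length $\geq 2$ admits a factorization $b=a\tilde b$ with $a\in Q_1$ and $\tilde b\in B$ (tracking a global sign in the negative-path case), and that the one-dimensional spaces $e_{i_1}\Lambda_{d-1}e_{i_0}$ and $e_{i_n}\Lambda_{d-n+1}e_{i_1}$ are in fact spanned by $c_a$ and $c_{\tilde b}$ respectively, so that $\alpha$ and $\beta$ are meaningful scalars. Once these reductions are in place the argument is a direct manipulation of the Nakayama form and the associativity of multiplication in $\Lambda$, with no further case analysis required.
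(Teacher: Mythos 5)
Your proposal is correct and follows essentially the same route as the paper: both reduce everything to the identity $(b,\widetilde b)=(\widetilde b,b)$ for the unique pairing partner $\widetilde b$ supplied by the graded Schurian hypothesis, and both prove $(1)\Rightarrow(2)$ by induction on length after stripping off an initial arrow. The only differences are organizational --- you run a single induction via the associativity identity $(\tilde b c_b)a=\tilde b(c_b a)$, where the paper instead proves the auxiliary identities $(ah)^*a=h^*$ and $h^*h=\omega_{t(h)}$ in two separate inductions, and you make explicit the easy implication $(3)\Rightarrow(1)$ that the paper leaves tacit.
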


\begin{proof}

Let $b\in e_iBe_j$ any element of the basis. We claim that there is
a unique $\widetilde{b}\in B$ such that $(b,\widetilde{b})\neq 0$.
Clearly we have $(b,c)=0$ when $c\not\in e_jBe_i$. If $\omega_i\in
e_iBe_i$ is the unique element of $e_iSoc(\Lambda)$ in $B$, then we
also have $(b,c)=0$ whenever $c\in e_jBe_i$ but $deg(b) + deg(c)\neq
deg(\omega_i)$. Therefore $(b,c)\neq 0$ implies that $c\in
e_jB_re_i$ and $r=deg(\omega_i)- deg(b)$. Since $dim(e_j\Lambda_r
e_i)\leq 1$ and $(-,-)$ is non-degenerate our claim is settled by
choosing $\widetilde{b}$ to be the unique element in $e_jBe_i$, with
$r= deg(\omega_i)-deg(b)$.

The last paragraph implies that the assignment
$b\rightsquigarrow\widetilde{b}$ gives an involutive bijction
$B\longrightarrow B$ and that
$b^*=(b,\widetilde{b})^{-1}\widetilde{b}$, for each $b\in B$. In
particular, if $B^{**}$ is the (right) dual basis of $B^*$ then
$b^{**}= \lambda_b b$, for some $\lambda_b \in K^*$ which can be
explicitly calculated. Namely, we have

$$1=(b^*, b^{**})= ((b,\widetilde{b})^{-1}\widetilde{b}, \lambda_b b)=
\lambda_b (b, \widetilde{b})^{-1}(\widetilde{b},b)$$ and so
$\lambda_b=(b,\widetilde{b})(\widetilde{b},b)^{-1}$. It follows that
$b^{**}=b$ if and only if $(b,\widetilde{b})=(\widetilde{b},b)$.
From this the equivalence of assertions $2$ and $3$ is immediate.

Note that our hypotheses guarantee that the nonzero homogeneous
elements of $\Lambda$ are precisely the scalar multiples of the
elements of $B$. We denote by $H$ the set of those nonzero
homogeneous elements. Then an alternative description of $b^*$ is
that it is the unique element of $H$ such that $bb^*=\omega_{i(b)}$.
We can then extend $(-)^*$ to a bijective map $(-)^*:
H\longrightarrow H$, so that $h^*$ is the unique element of $H$ such
that $hh^*= \omega_{i(h)}$. It is then clear that $(\lambda h)^*=
\lambda^{-1}h^*$, for all $h\in H$ and $\lambda\in K^*$.

$1)\Longrightarrow 2)$ Note that if $h_1,h_2 \in H$ are such that
$h_1h_2(h_1h_2)^*= \omega_{i(h_1)}$ implies that $h_2(h_1h_2)^*=
h_1^*$.

We next prove that if $a\in Q_1$ and $h\in H$ are such that $ah\neq
0$, then $(ah)^*a=h^*$. We proceed by induction on $deg(h)$, the
case $deg(h)=0$ being a direct consequence of the hypothesis. Since
$h$ is a scalar multiple of an element of $B$, we can assume without
loss of generality that $h$ is a path in $Q$, say, $h=\alpha_1\cdots
\alpha_r$. Then we have

$$h[(ah)^*a]= \alpha_1\cdots \alpha_r(a\alpha_1\cdots \alpha_r)^*a=
\alpha_1\cdots \alpha_{r-1}[\alpha_r(a\alpha_1\cdots
\alpha_{r-1}\alpha_r)^*]a=$$ $$\alpha_1\cdots
\alpha_{r-1}(a\alpha_1\cdots \alpha_{r-1})^*a$$

By the induction hypothesis, the last term is equal to
$\alpha_1\cdots \alpha_{r-1}(\alpha_1\cdots \alpha_{r-1})^*=
\omega_{i(b)}$. It follows that $(ab)^*a= h^*$.

We finally prove by induction on $deg(h)$ that $h^*h=\omega_{t(h)}$,
for all $h\in H$, which implies that $h^{**}=h$, for all $h\in H$
and hence ends the proof. The cases of $deg(h)=0,1$ are clear. So we
assume that $deg(h)>1$ and, again, assume that $h=\alpha_1\cdots
\alpha_r$ is a path $(r>1)$. Then

$$h^*h= [\alpha_1(\alpha_2\cdots \alpha_r)]^*\alpha_1\alpha_2 \cdots \alpha_r=
(\alpha_2\cdots \alpha_r)^*\alpha_2 \cdots \alpha_r$$ and, by the
induction hypothesis, the last term is equal to
$\omega_{t(\alpha_r)}= \omega_{t(h)}$.

\end{proof}

\subsection{Stable and Absolute Hochschild (Co)Homology of
Self-injective Algebras}

In this section we briefly recall some results from \cite{ES} which
will be nedeed through the paper. All through the section $A$ is a
basic finite dimensional self-injective algebra.

\begin{defi} \label{Complete projective resolution} \rm Let  $M$ be any $A$-module. A \emph{complete projective resolution} of $M$
is an acyclic complex of projective $A$-modules

$$P: \cdots \longrightarrow P^{-2}\stackrel{d^{-2}}{\longrightarrow}P^{-1}
\stackrel{d^{-1}}{\longrightarrow}P^0
\stackrel{d^0}{\longrightarrow} P^1\stackrel{d^1}{\longrightarrow}
P^2\longrightarrow \cdots$$ such that $Z^1:= Ker(d^1)=M$. Such a
complete resolution is called \emph{minimal} in case the induced
morphism $P^n\longrightarrow Z^{n+1}:=Im(d^n)$ is a projective
cover, for each $n\in \mathbb{Z}$.

\end{defi}

\begin{prop}Let  $M$ be an $A$-module. A complete projective resolution of $A$ is
unique, up to isomorphism in the homotopy category $\mathcal{H}A$. A
minimal complete projective resolution is unique, up to isomorphism
in the category $\mathcal{C}A$ of (cochain) complexes of
$A$-modules.

\end{prop}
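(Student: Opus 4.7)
The plan is to split each complete projective resolution into two classical objects: a projective resolution of $M$ in non-positive degrees and an injective coresolution of $M$ in positive degrees, then apply the standard comparison theorems to each half. Given a complete projective resolution $P^\bullet$ of $M$, acyclicity yields $\mathrm{Im}(d^0)=Z^1=M$, so the negative truncation
$$\cdots\longrightarrow P^{-1}\longrightarrow P^0\twoheadrightarrow M\longrightarrow 0$$
is a projective resolution of $M$, while the positive truncation
$$0\longrightarrow M\hookrightarrow P^1\longrightarrow P^2\longrightarrow\cdots$$
is a coresolution of $M$ by modules which are simultaneously projective and injective, thanks to the self-injectivity of $A$.

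For the first assertion I would take two complete projective resolutions $P^\bullet$ and $Q^\bullet$ of $M$ and apply the classical comparison theorem for projective resolutions to lift $\mathrm{id}_M$ to a chain map $f^{\leq 0}:P^{\leq 0}\to Q^{\leq 0}$, unique up to homotopy, and the dual comparison theorem for injective coresolutions (valid because the $P^n$ and $Q^n$ are injective for $n\geq 1$) to obtain $f^{\geq 1}:P^{\geq 1}\to Q^{\geq 1}$, again unique up to homotopy. Both halves extend $\mathrm{id}_M$ along the canonical factorization $P^0\twoheadrightarrow M\hookrightarrow P^1$, so they glue to a chain map $f:P^\bullet\to Q^\bullet$. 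Constructing $g:Q^\bullet\to P^\bullet$ symmetrically and running the standard homotopy argument on each half yield homotopies $gf\simeq\mathrm{id}_{P^\bullet}$ and $fg\simeq\mathrm{id}_{Q^\bullet}$, which give the isomorphism in $\mathcal{H}A$.

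For the second assertion I would use the refinement that in a minimal complete resolution every map $P^n\twoheadrightarrow Z^{n+1}$ is a projective cover. Combined with self-injectivity, this forces each embedding $Z^n\hookrightarrow P^n$ with $n\geq 1$ to be an injective envelope. Hence the negative half is a minimal projective resolution and the positive half is a minimal injective coresolution, each unique up to isomorphism of complexes: any chain endomorphism lifting $\mathrm{id}_M$ has every component an endomorphism of a projective cover (resp.\ injective envelope) of the same module, hence an isomorphism by the usual minimality argument. Gluing across $\mathrm{id}_M$ at the junction, any comparison map between minimal complete projective resolutions must be an isomorphism in $\mathcal{C}A$.

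The main obstacle is the compatibility at the interface between degrees $0$ and $1$: the two halves are produced by lifting $\mathrm{id}_M$ through the factorization of $d^0_P$ and $d^0_Q$, so the chain maps themselves match automatically, but one still has to verify that the homotopies produced on each half agree at the junction so that they can be concatenated into a single homotopy on the whole complex. This is the only genuinely routine verification in the argument and the only place where I expect any real bookkeeping.
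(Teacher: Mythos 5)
Your argument is correct, but it takes a different route from the paper. The paper disposes of the first assertion in one line by citing Krause's theorem that the cocycle functor $P\rightsquigarrow Z^1(P)$ induces a triangulated equivalence $\mathcal{H}_{ac}(A\text{-}Inj)\cong A\text{-}\underline{Mod}$, so that two complete resolutions with the same $Z^1$ are automatically homotopy equivalent; the second assertion is then attributed to uniqueness of projective covers. You instead splice the classical comparison theorems: the non-positive truncation is a projective resolution, the positive truncation an injective coresolution (using self-injectivity), and you glue the two liftings of $\mathrm{id}_M$ across the factorization $P^0\twoheadrightarrow M\hookrightarrow P^1$. What your approach buys is a self-contained, elementary proof with no appeal to the machinery of the stable derived category; what it costs is exactly the junction bookkeeping you flag. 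That bookkeeping does close up cleanly: writing $h=gf-\mathrm{id}_{P^\bullet}$, one checks $\pi_P h^0=0$ and $h^1\iota_P=0$, so $h^0$ factors through $\mathrm{Im}(d^{-1})$ and $h^1$ kills $Z^1$; one may therefore take the connecting component $s^1=0$ and build the two half-homotopies independently, the identity in degree $0$ reading $h^0=d^{-1}s^0$ and that in degree $1$ reading $h^1=s^2d^1$. The only other point you should not leave implicit is the step ``projective cover of $Z^{n+1}$ implies injective envelope of $Z^n$'': this requires knowing that the modules $Z^{n}$ have no injective direct summands, which does follow from minimality (an injective summand of $Z^{n+1}$ would split off $P^{n+1}$ and be killed by $P^{n+1}\twoheadrightarrow Z^{n+2}$, contradicting the projective cover condition there), after which $E(Z^n)$ is a summand of $P^n$ whose complement maps isomorphically onto a projective summand of $Z^{n+1}$ and hence vanishes. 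With those two verifications supplied, your proof is complete.
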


\begin{proof}The first assertion ia a consequence of a more general
fact (see, e.g. \cite{Kr} Proposition 7.2 and Example 7.16]) which
states that the assignment $P\rightsquigarrow Z^1(P)= Ker(d^1)$
gives an equivalence of triangulated categories
$\mathcal{H}_{ac}(A-Inj)\cong A-\underline{Mod}$, where
$\mathcal{H}_{ac}(A-Inj)$ is the full subcategory of $\mathcal{H}A$
consisting of acyclic complexes of injective (=projective)
$A$-modules.

The final assertion of the proposition is a direct consequence of
the uniqueness of the projective cover up to isomorphism.

\end{proof}
\begin{notat} \rm
 Under the hypothesis of definition \ref{Complete projective
resolution}, let $M, N$ be left $A$-modules and $X$ be a right
$A$-module, and let $P$ be a complete projective resolution of $M$.
For each $i\in\mathbb{Z}$, we put

\begin{enumerate}

    \item $\underline{Ext}_A^i(M,N)= H^i(Hom_A(P,N))$

    \item $\underline{Tor}_i^A(X,M)= H^{-i}(X\otimes_A P)$,
    \end{enumerate}
where $H^i(-)$ denotes the $i$-th homology space of the given
complex.

    We call $\underline{Ext}_A^i(-,-)$ and
    $\underline{Tor}_i^A(-,-)$ the stable $Ext$ and the stable
    $Tor$, respectively. Their definition does not depend on the
    complete resolution $P$ that we choose.
\end{notat}

We clearly have $\underline{Ext}_A^i(M,N)= Ext_{A}^i(M,N)$ and
$\underline{Tor}_i^A(X,M)= Tor_i^A(X,M)$, for all $i>0$. In
particular, we have canonical isomorphisms of graded $K$-vector
spaces

$$Ext_A^*(M,N)= \oplus_{i\geq 0}Ext_A^i(M,N)\stackrel{\lambda_{M,N}}
{\longrightarrow}\oplus_{i\in \mathbb{Z}}\underline{Ext}_A^i(M,N)=:
\underline{Ext}_A^*(M,N)$$ and

$$\underline{Tor}_*^A(X,M)= \oplus_{i\in \mathbb{Z}}\underline{Tor}_i^A(X,M)
\stackrel{\mu_{X,M}}{\longrightarrow}\oplus_{i\geq 0}Tor_i^A(X,M)=
Tor_*^A(X,M)$$ where $Ker(\lambda_{M,N})$ and $Coker(\mu_{M,N})$ are
concentrated in degree 0. Actually, $Ker(\lambda_{M,N})=
\mathcal{P}(M,N)= \{f\in Hom_A(M,N)= Ext_A^0(M,N): \text{ f factors
through a projective A-module}\}$ and $Coker(\mu_{X,N})$ is
isomorphic to the image of the morphism $1_x\otimes j_M: X\otimes M
\longrightarrow X\otimes _AP^1$, where $j: M\longrightarrow P^1$ is
the injective envelope of $M$. The following fact for $Ext$ is
well-known. Finally, note that
$\underline{Ext}_A^i(M,N)\cong\underline{Hom}_A(\Omega_A^i(M),N)$
for all $i\in\mathbb{Z}$. In particular, for $M=N$ we get  a
structure of graded algebra on $\underline{Ext}_A^*(M,M)$ induced
from that of
$\oplus_{i\in\mathbb{Z}}\underline{Hom}_A(\Omega_A^i(M),M)$, which
is defined by the rule

\begin{center}
$g\cdot f=g\circ\Omega_A^i(f)$,
\end{center}
whenever $f\in \underline{Hom}_A(\Omega_A^j(M),M)$ and
$g\in\underline{Hom}_A(\Omega_A^i(M),M)$. In particular, the
multiplication on $\underline{Ext}_A^*(M,M)$ extends the Yoneda
product defined in subsection 2.2. Then the next result follows in a
straightforward way.

\begin{prop} \label{graded module Ext(M,N)}

Let $M,N$ be left $A$-modules. The space $\underline{Ext}_A^*(M,M)$
has a canonical structure of graded algebra over which
$\underline{Ext}_A^*(M,N)$ is a graded right module. Moreover the
map

$$\lambda_{M,M}: Ext_A^*(M,M)\longrightarrow
\underline{Ext}_A^*(M,M)$$ is a homomorphism of graded algebras and
the following diagram is commutative, where the horizontal arrows
are the multiplication maps:

$$\xymatrix{Ext_A^*(M,N)\otimes Ext_A^*(M,M)\ar[r]^{\vspace{0.7 cm}\hspace{1.2 cm}{Yoneda}} \ar[d]^{\lambda_{M,N}\otimes \lambda_{M,M}}& Ext_A^*(M,N) \ar[d]^{\lambda_{M,N}} \\
\underline{Ext}_A^*(M,N)\otimes \underline{Ext}_A^*(M,M) \ar[r] &
\underline{Ext}_A^*(M,N)}
$$

\end{prop}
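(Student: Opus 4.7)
The plan is to exploit the identification $\underline{Ext}_A^i(M,N)\cong\underline{Hom}_A(\Omega_A^i(M),N)$ which is asserted just before the statement. Under this identification, both the algebra structure on $\underline{Ext}_A^*(M,M)$ and the module structure on $\underline{Ext}_A^*(M,N)$ reduce to composition of morphisms in the stable category $A\text{-}\underline{\mathrm{Mod}}$, post-composed after the application of a suitable power of the syzygy functor $\Omega_A$. The syzygy is an auto-equivalence of the stable category for a self-injective algebra, so all of this is well defined.

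First I would verify that the formula $g\cdot f=g\circ\Omega_A^i(f)$ (for $f\in\underline{Hom}_A(\Omega_A^j(M),M)$ and $g\in\underline{Hom}_A(\Omega_A^i(M),M)$) endows $\underline{Ext}_A^*(M,M)$ with the structure of an associative unital graded $K$-algebra. Bilinearity is immediate; the unit is $1_M\in\underline{Hom}_A(M,M)=\underline{Ext}_A^0(M,M)$; and associativity follows from the functoriality of $\Omega_A^k$, since for $h,g,f$ of respective degrees $k,i,j$ one has
$$(h\cdot g)\cdot f=(h\circ\Omega_A^k(g))\circ\Omega_A^{k+i}(f)=h\circ\Omega_A^k\bigl(g\circ\Omega_A^i(f)\bigr)=h\cdot(g\cdot f).$$
The analogous computation shows that the formula $h\cdot f=h\circ\Omega_A^j(f)$, for $h\in\underline{Hom}_A(\Omega_A^j(M),N)$ and $f\in\underline{Hom}_A(\Omega_A^i(M),M)$, defines a graded right module structure on $\underline{Ext}_A^*(M,N)$ over $\underline{Ext}_A^*(M,M)$.

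The heart of the proof is to identify the Yoneda product with this stable composition, i.e.\ to show that $\lambda_{M,M}$ is a graded algebra morphism and that the displayed square commutes. Recall that to form the Yoneda product $\epsilon\delta$ with $\delta\in Ext_A^n(M,M)$ and $\epsilon\in Ext_A^m(M,N)$, one chooses a cocycle representative $\widetilde{\delta}:P^{-n}\to M$ and lifts it to a chain map $(\delta^{-k})_k:P^{\bullet-n}\to P^{\bullet}$, then post-composes with $\widetilde{\epsilon}:P^{-m}\to N$. Under the isomorphism $Ext_A^n(M,M)\cong\underline{Hom}_A(\Omega_A^n(M),M)$, the class of $\widetilde{\delta}$ corresponds to its restriction to $\Omega_A^n(M)\subseteq P^{-n}$; and the chain lift $(\delta^{-k})_k$ restricts on the kernels of the differentials to a representative of $\Omega_A^k$ applied to this restriction. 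Hence $\widetilde{\epsilon}\circ\delta^{-m}$ restricted to $\Omega_A^{m+n}(M)$ represents exactly the composite $\epsilon\circ\Omega_A^m(\delta)$ after pushing to the stable category. This is the formula defining the module action above (and, taking $N=M$, the algebra product), so $\lambda_{M,N}$ intertwines Yoneda product and stable composition, which is precisely the commutativity of the diagram.

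The main obstacle is this last identification: tracking how the non-canonical chain lift $(\delta^{-k})_k$ coming from projectivity in the Yoneda construction corresponds, after restriction to syzygies and passage to the stable category, to the honest functorial iterate $\Omega_A^k$ of the representing stable morphism. The residual ambiguity in the lift is killed precisely by maps factoring through projectives, so it disappears in $\underline{Hom}$. Once this is established, the associativity and unitality from the first step, together with the compatibility just described, yield both the algebra-homomorphism property of $\lambda_{M,M}$ and the commutativity of the square at once.
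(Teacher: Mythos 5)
Your proposal is correct and follows exactly the route the paper intends: the paper sets up the identification $\underline{Ext}_A^i(M,N)\cong\underline{Hom}_A(\Omega_A^i(M),N)$ and the product $g\cdot f=g\circ\Omega_A^i(f)$ in the paragraph preceding the proposition and then simply states that the result "follows in a straightforward way." You have supplied the details (associativity via functoriality of $\Omega_A$, and the chain-lift argument identifying the Yoneda product with stable composition, with the ambiguity of the lift killed by maps factoring through projectives) that the paper leaves implicit.
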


Consider now $\underline{Tor}_*^A(X,M)$ as a graded $K$-vector
space, but taking as $n$-homogeneous component precisely
$\underline{Tor}_{-n}^A(X,M)$. For that reason we shall write
$\underline{Tor}_{- *}^A(X,M)$. If now $P$ is a fixed minimal
complete projective resolution of $M$, then $P$ is canonically
differential graded (dg) $A$-module, i.e., an object in
$\mathcal{C}_{dg}A$ using the terminology of \cite{K2}. Then $B=
End_{\mathcal{C}_{dg}A}(P)$ is a dg algebra which acts on
$X\otimes_A P$ in the obvious way, making $X\otimes_AP$ into a dg
left $B$-module. As a consequence, $H^*(X\otimes_A P)=
Tor_{-*}^A(X,M)$ is a graded left module over the cohomology algebra
$H^*B= \oplus_{n\in \mathbb{Z}}Hom_{\mathcal{H}A}(P, P[n])$, where
$\mathcal{H}A$ is the homotopy category of $A$ (see \cite{K2}). But,
due to the fact that the $1$-cocycle functor gives an equivalence of
triangulated categories $Z^1:
\mathcal{H}_{ac}(Inj-A)\stackrel{\sim}{\longrightarrow}
A-\underline{Mod}$, we have canonical isomorphisms of graded
algebras $H^*B\cong \oplus_{n\in \mathbb{Z}}\underline{Hom}_A(M,
\Omega_A^{-n}M)\cong \underline{Ext}_A^*(M,M)$. It then follows the
desired structure of $Tor_{-*}^A(X,M)$ as a graded left
$\underline{Ext}_A^*(M,M)$-module.

If we take now the nonnegatively graded subalgebra
$\underline{Ext}_A^{\geq 0}(M,M):=\oplus_{n\geq
0}\underline{Ext}_A^n(M,M)$ of $\underline{Ext}_A^*(M,M)$, then
$\underline{Tor}_{->0}(X,M)= \oplus_{j<0}\underline{Tor}_j(X,M)$ is
a graded $\underline{Ext}^{\geq 0}(M,M)$-submodule of
$\underline{Tor}_{-*}^A(X,M)$ and the quotient
$\frac{\underline{Tor}_{-*}^A(X,M)}{\underline{Tor}_{-(>0)}^A(X,M)}$,
which is isomorphic to $\underline{Tor}_{-(\leq 0)}^A(X,M)$ as a
graded $K$-vector space, is a graded left $\underline{Ext}_A^{\geq
0}(M,M)$-module. That is, $\oplus_{i\geq 0}\underline{Tor}_i^A(X,M)$
has a canonical structure of graded left $\underline{Ext}_A^{\geq
0}(M,M)$-module, where $\underline{Tor}_i^A(X,M)$ is the component
of degree $-i$, for all $i\geq 0$. Since we have a surjective
morphism of graded algebras $Ext_A^*(M,M)\twoheadrightarrow
\underline{Ext}_A^{\geq 0}(M,M)$ we get a structure of graded left
$Ext_A^*(M,M)$-module on $\underline{Tor}_{-(\leq 0)}^A(X,M)$ given
by restriction of scalars.

We can now provide $Tor_{-*}^A(X,M)$ (that is just $Tor_*^A(X,M)$,
but with $Tor_i^A(X,M)$ in degree $-i$, for all $i\geq 0$) with a
structure of graded left $Ext_A^*(M,M)$-module of which
$\underline{Tor}_{-(\leq 0)}^A(X,M)$ is a graded submodule. Indeed,
the product $Ext_A^i(M,M)\cdot Tor_{-(-j)}^A(X,M)= Ext_A^i(M,M)\cdot
\underline{Tor}_{-(-j)}^A(X,M)$ is clear when $j\neq 0$. For $j=0$
we put

\begin{center}
$Ext_A^i(M,M)\cdot Tor_0^A(X,M)=0$ if $i>0$,
\end{center}

and for $i=0$ the multiplication is identified by the following
diagram,

$$\xymatrix{Ext_A^0(M,M)\times Tor_0^A(X,M)\ar[r]
\ar[d]^{\shortparallel} & Tor_0^A(X,M) \ar[d]^{\shortparallel}
\\ End_A(M)\times (X\otimes_{A}M) \ar[r] & X\otimes_{A}M},$$
where the bottom horizontal arrow is the canonical map $(f,x\otimes
m)\rightsquigarrow x\otimes f(m)$.

These comments prove the following correspondent of proposition
\ref{graded module Ext(M,N)} for $Tor$.

\begin{prop} \label{graded module Tor(X,M)}
Let $X$ and $M$ be a right and a left $A$-modules, respectively.
Then $\underline{Tor}_{-*}^A(X,M)$ (resp. $Tor_{-*}^A((X,M)$) has a
canonical structure of graded left $\underline{Ext}_A^*(M,M)$-
(resp. $Ext_A^*(M,M)$-)module. Moreover, the following diagram is
commutative

\begin{picture}(340,230)(-50,-180)

\put(-60,10){\bf{$Ext_A^*(M,M)\times \underline{Tor}_{-*}^A(X,M)$}}

\put(120,10){\bf{$\underline{Ext}_A^*(M,M)\times
\underline{Tor}_{-^*}^A(X,M)$}}

\put(299,10){\bf{$\underline{Tor}_{-*}^A(X,M) $}}

\put(-60,-50){\bf{$Ext_A^*(M,M)\times Tor_{-*}^A(X,M)$}}

\put(292,-50){\bf{$Tor_{-*}^A(X,M)$}}

\put(73,16){\bf{\small $\lambda_{M,M}\times 1$}} \put(258,16){\small
{mult.}} \put(5,-15){\small ${1\times \mu_{X,M}}$}
\put(332,-15){\small ${\mu_{X,M}}$} \put(150,-57){\small{ {mult.}}}

\put(72,13){\vector(1,0){45}} \put(250,13){\vector(1,0){45}}
\put(327,2){\vector(0,-1){35}} \put(2,2){\vector(0,-1){35}}
\put(72,-47){\vector(1,0){215}}
\end{picture}

\end{prop}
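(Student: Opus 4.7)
The plan is to follow essentially verbatim the construction developed in the paragraphs immediately preceding the statement, supplying the checks that promote it to a genuine proof. I would begin with the dg-algebra argument: taking a minimal complete projective resolution $P$ of $M$, the endomorphism dg algebra $B=End_{\mathcal{C}_{dg}A}(P)$ acts on $X\otimes_A P$ from the left, making it a dg left $B$-module. Passing to cohomology endows $H^*(X\otimes_A P)=\underline{Tor}_{-*}^A(X,M)$ with the structure of a graded left $H^*B$-module, and the triangulated equivalence $Z^1:\mathcal{H}_{ac}(\text{Inj-}A)\stackrel{\sim}{\longrightarrow}A\text{-}\underline{Mod}$ provides the canonical isomorphism of graded algebras $H^*B\cong \underline{Ext}_A^*(M,M)$. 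Hence $\underline{Tor}_{-*}^A(X,M)$ carries a canonical graded left $\underline{Ext}_A^*(M,M)$-module structure.

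To obtain the $Ext_A^*(M,M)$-module structure on classical $Tor$, I would first restrict to the subalgebra $\underline{Ext}_A^{\geq 0}(M,M)$, observe that $\underline{Tor}_{-(>0)}^A(X,M)$ is a graded submodule of $\underline{Tor}_{-*}^A(X,M)$ under this action (since nonnegatively graded cohomology elements can only lower the $Tor$-index), and take the quotient $\underline{Tor}_{-(\leq 0)}^A(X,M)$. Pulling back along the surjection $Ext_A^*(M,M)\twoheadrightarrow \underline{Ext}_A^{\geq 0}(M,M)$ induced by $\lambda_{M,M}$ gives this quotient the structure of a graded $Ext_A^*(M,M)$-module. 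Finally, I extend to all of $Tor_{-*}^A(X,M)$ by the ad hoc prescription stated just before the proposition: positive-degree elements annihilate $Tor_0^A(X,M)$, and $End_A(M)=Ext_A^0(M,M)$ acts on $Tor_0^A(X,M)=X\otimes_A M$ by the evaluation rule $(f,x\otimes m)\rightsquigarrow x\otimes f(m)$. The module axioms in nonzero degrees reduce to those already verified; in degree $0$ they follow from the functoriality of $X\otimes_A -$.

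For the commutativity of the diagram, the upper rectangle commutes by construction, since the $Ext_A^*(M,M)$-action on $\underline{Tor}_{-*}^A(X,M)$ was defined precisely as restriction along $\lambda_{M,M}$. The lower rectangle commutes by the definition of the degree-zero extension. The right-hand vertical compatibility, that $\mu_{X,M}$ intertwines the two actions, is automatic in nonzero degrees, where $\mu_{X,M}$ is bijective. The main obstacle I anticipate is the degree-zero bookkeeping: one must verify that the ad hoc $End_A(M)$-action on $Tor_0^A(X,M)=X\otimes_A M$ agrees, on the image of $\mu_{X,M}$, with the action descending from the complete projective resolution interpretation of $\underline{Tor}_0^A(X,M)$. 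Unravelling $\underline{Tor}_0^A(X,M)$ in terms of $P$ and comparing with $X\otimes_A M$ via the augmentation should make this match transparent, but the signs, conventions for the degree-zero differential, and the identification $Coker(\mu_{X,M})\cong Im(1_X\otimes j_M)$ require careful tracking.
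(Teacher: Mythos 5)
Your proposal reproduces the paper's own argument: the paper proves this proposition precisely by the preceding comments, namely the dg-module structure of $X\otimes_A P$ over $B=End_{\mathcal{C}_{dg}A}(P)$ together with $H^*B\cong\underline{Ext}_A^*(M,M)$, the passage to $\underline{Tor}_{-(\leq 0)}$ via the subalgebra $\underline{Ext}_A^{\geq 0}(M,M)$ and restriction along $Ext_A^*(M,M)\twoheadrightarrow\underline{Ext}_A^{\geq 0}(M,M)$, and the ad hoc degree-zero extension. The commutativity checks you add (including the degree-zero compatibility with $\mu_{X,M}$) are exactly the routine verifications the paper leaves implicit, so the proof is correct and follows the same route.
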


We are specially interested in the particular case of the two
previous propositions  in which $A= \Lambda^e= \Lambda \otimes
\Lambda^{op}$, for a self-injective algebra $\Lambda$ and $M=
\Lambda$ viewed as $\Lambda^e$-module. In that case, we put
$\underline{HH}^n(\Lambda, N)=
\underline{Ext}_{\Lambda^e}^n(\Lambda,N)$ and
$\underline{HH}_n(\Lambda, N)=
\underline{Tor}_n^{\Lambda^e}(\Lambda, N)$ and call then the $n$-th
stable Hochschild cohomology and $n$-th stable Hochschild homology
space of $\Lambda$ with coefficients in $N$. Putting
$\underline{HH}^*(\Lambda,N)= \oplus_{n\in
\mathbb{Z}}\underline{HH}^n(\Lambda,N)$, $\underline{HH}^*(\Lambda)=
\underline{HH}^*(\Lambda, \Lambda)$, $\underline{HH}_*(\Lambda,N)=
\oplus_{n\in \mathbb{Z}}\underline{HH}_n(\Lambda,N)$ and
$\underline{HH}_*(\Lambda)= \underline{HH}_*(\Lambda, \Lambda)$, we
have the following straightforward consequence of Propositions
\ref{graded module Ext(M,N)} and \ref{graded module Tor(X,M)}. The
graded commutativity of $HH^*(\Lambda )$ was showed by Gerstenhaber
(\cite{G}) and that of  $\underline{HH}^*(\Lambda )$ can be found in
\cite{ES}.

\begin{cor}In the situation above, $\underline{HH}^*(\Lambda)$ (resp.
$HH^*(\Lambda)$) has a canonical structure of graded commutative
algebra over which $\underline{HH}^*(\Lambda, N)$ (resp.
$HH^*(\Lambda,N)$) is a graded right module and
$\underline{HH}_{-*}(\Lambda, N)$ (resp. $HH_{-*}(\Lambda,N)$) is a
graded left module. Moreover, the graded algebra structure on
$HH^*(\Lambda)$ and the graded module structures on
$HH^*(\Lambda,N)$ and $HH_{-*}(\Lambda,N)$ are determined by its
stable correspondent, except in degree zero.
\end{cor}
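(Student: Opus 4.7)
The plan is to deduce the corollary as a direct application of Propositions \ref{graded module Ext(M,N)} and \ref{graded module Tor(X,M)} in the specific case $A=\Lambda^e$, $M=\Lambda$ (viewed as a left $\Lambda^e$-module via the outer bimodule action), combined with two known graded-commutativity results.

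First I would specialize Proposition \ref{graded module Ext(M,N)} to this choice of $A$ and $M$, taking $N$ to be an arbitrary $\Lambda$-bimodule. This immediately supplies the graded algebra structures on $HH^*(\Lambda)=Ext^*_{\Lambda^e}(\Lambda,\Lambda)$ and $\underline{HH}^*(\Lambda)=\underline{Ext}^*_{\Lambda^e}(\Lambda,\Lambda)$ under the Yoneda product, turns $HH^*(\Lambda,N)$ and $\underline{HH}^*(\Lambda,N)$ into graded right modules over them, and produces a commutative square of multiplication maps in which $\lambda_{\Lambda,\Lambda}$ is a graded algebra homomorphism and $\lambda_{\Lambda,N}$ is a compatible morphism of graded right modules. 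Parallelly, applying Proposition \ref{graded module Tor(X,M)} with $X=N$ (viewed as a right $\Lambda^e$-module) endows $HH_{-*}(\Lambda,N)$ and $\underline{HH}_{-*}(\Lambda,N)$ with graded left $HH^*(\Lambda)$- and $\underline{HH}^*(\Lambda)$-module structures, together with the analogous compatibility diagram involving $\mu_{N,\Lambda}$.

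Next I would invoke the graded commutativity of $HH^*(\Lambda)$, which is Gerstenhaber's classical theorem \cite{G}, and of $\underline{HH}^*(\Lambda)$, proved by Eu and Schedler in \cite{ES}; this settles the first two assertions.

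For the last assertion, the key observation is that $\lambda_{\Lambda,\Lambda}$ is an isomorphism in every positive degree because its kernel $\mathcal{P}(\Lambda,\Lambda)$ is concentrated in degree zero, and $\mu_{N,\Lambda}$ is an isomorphism in every nonzero cohomological degree of $HH_{-*}(\Lambda,N)$ because its cokernel is concentrated in degree zero while $Tor_i^{\Lambda^e}(\Lambda,N)=0$ for $i<0$. Plugging these two facts into the commutative multiplication squares supplied by the propositions shows that any Yoneda product on $HH^*(\Lambda)$, any right action on $HH^*(\Lambda,N)$, and any left action on $HH_{-*}(\Lambda,N)$ whose factors and output all avoid cohomological degree zero is determined by its stable counterpart. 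I do not anticipate any genuine obstacle here; the only care required is the degree bookkeeping enforced by the convention $Tor_i\mapsto$ degree $-i$ adopted just before Proposition \ref{graded module Tor(X,M)}.
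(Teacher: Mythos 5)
Your proposal is correct and follows exactly the route the paper intends: the corollary is stated there as a straightforward specialization of Propositions \ref{graded module Ext(M,N)} and \ref{graded module Tor(X,M)} to $A=\Lambda^e$ and $M=\Lambda$, with graded commutativity supplied by Gerstenhaber for $HH^*(\Lambda)$ and by Eu--Schedler for $\underline{HH}^*(\Lambda)$, and the final assertion following from the fact that $Ker(\lambda_{\Lambda,\Lambda})=\mathcal{P}(\Lambda,\Lambda)$ and $Coker(\mu)$ are concentrated in degree zero. You have simply written out the details the paper leaves implicit.
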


\begin{rem} \rm
If $B=\oplus_{i\in\mathbb{Z}}B_i$ is a graded commutative algebra,
then any graded left $B$-module $V=\oplus_{i\in\mathbb{Z}}V_i$ may
be viewed as a graded right $B$-module by defining
$vb=(-1)^{deg(b)deg(v)}bv$, for all homogeneous elements $b\in B$
and $v\in V$.  In particular, we shall view in this way
$HH_{-*}(\Lambda,M)$ as graded right
$\underline{HH}^*(\Lambda)$-module, for each $\Lambda$-bimodule $M$.
We proceed similarly with $\underline{HH}_{-*}(\Lambda ,M)$ over
$\underline{HH}^*(\Lambda )$.
\end{rem}

Note that if $Q$ and $M$ are a projective and an arbitrary
$\Lambda$-bimodule, then $D(Q\otimes_{\Lambda^e}M)\cong
Hom_{\Lambda^e}(Q, D(M))$ by adjunction. If now $P$ is the complete
minimal  projective resolution of $\Lambda$ as a bimodule
(equivalently, as a right $\Lambda^e$-module), we have an
isomorphism of complexes $D(P\otimes_{\Lambda^e}M)\cong
Hom_{\Lambda^ e}(P,D(M))=Hom_{\mathcal{C}_{dg}\Lambda^e}(P,D(M))$,
convening that $D(T)^i=D(T^{-i})$ for each complex (or each graded
vector space) $T$ and each $i\in\mathbb{Z}$. It is routinary to see
that the last isomorphism preserves the structures of right dg
modules over the dg algebra
$B:=End_{\mathcal{C}_{dg}\Lambda^e}(P,P)$. It then follows easily:

\begin{rem} \label{Stable duality} \rm If $\Lambda$ is a finite dimensional self-injective
algebra, then

\begin{enumerate}

    \item $\underline{HH}_{-*}(\Lambda,M)\cong D(\underline{HH}^*(\Lambda,
    D(M)))$ as graded $\underline{HH}^*(\Lambda)$-modules.

    \item $HH_{-*}(\Lambda, M)\cong D(HH^*(\Lambda,D(M)))$ as graded
    $HH^*(\Lambda)$-modules.

\end{enumerate}

\end{rem}

\subsection{The Calabi-Yau Property}

If $\Lambda$ is a self-injective algebra, then $_\Lambda mod$ is a
Frobenius abelian category (i.e., it has enough projectives and
injectives and the projective objects coincide with the injective
ones). As a consequence, $\Lambda-\underline{mod}$ is a triangulated
category with $\Omega_{\Lambda}^{-1}:
_A\underline{mod}\longrightarrow _\Lambda\underline{mod}$ as its
suspension functor (cf. \cite{H}, Chapter 1).

Recall (see \cite{K3} and \cite{Ko}) that a $Hom$ finite
triangulated $K$-category $\mathcal{T}$ with suspension functor
$\sum : \mathcal{T}\longrightarrow \mathcal{T}$ is called Calabi-Yau
when there is a natural number $n$ such $\sum^n$ is a Serre functor
(i.e. $DHom_{\mathcal{T}}(X,-)$) and $Hom_{\mathcal{T}}(-, \sum^nX)$
are naturally isomorphic as cohomological functors
$\mathcal{T}^{op}\longrightarrow K-mod$). In such a case, the
smallest natural number $m$ such that $\sum^m$ is a Serre functor is
called the Calabi-Yau dimension \emph{CY-dimension} for short) of
$\mathcal{T}$.

In case $\Lambda$ is a self-injective algebra, Auslander formula
(see \cite{ARS}, Chapter IV, Section 4) says that one has a natural
isomorphism $D\underline{Hom}_\Lambda (X,-)\cong Ext_\Lambda
^1(-,\tau X)$, where $\tau: _\Lambda\underline{mod}\longrightarrow
_\Lambda\underline{mod}$ is the Auslander-Reiten (AR) translation.
Moreover $\tau= \Omega^2\mathcal{N}$, where $\mathcal{N}= D
Hom_{\Lambda}(-,\Lambda)\cong D(\Lambda)\otimes _{\Lambda}-:
_{\Lambda}mod \longrightarrow _{\Lambda}mod$ is the Nakayama functor
(see \cite{ARS}). As a consequence, as shown in \cite{ESk},
$_A\underline{mod}$ is $m$-CY if and only if $m$ is the smallest
natural number such that $\Omega_\Lambda^{-m-1}\cong
\mathcal{N}\cong _{\eta^{-1}}(-)$ as triangulated functors
$_{\Lambda}\underline{mod}\longrightarrow
_{\Lambda}\underline{mod}$, where $\eta$ is the Nakayama
automorphism.

In \cite{ES} an algebra is called \emph{Calabi-Yau Frobenius} of
dimension $m$ if it is self-injective and $m$ is the smallest
natural number such that $\Omega_{\Lambda^e}^{m+1}(\Lambda)\cong
Hom_{\Lambda^e}(\Lambda, \Lambda^e)$. Due to lemma \ref{Nakayama
aut}, we know that this $m$ is the smallest natural number such that
$\Omega_{\Lambda^e}^{m+1}(\Lambda)\cong _1\Lambda_{\eta^{-1}}$ in
$\Lambda^e-\underline{mod}$.

\begin{cor}If $\Lambda$ is a Calabi-Yau Frobenius algebra of
dimension $m$, then $\Lambda-\underline{mod}$ is $d$-Calabi-Yau, for
some natural number $d\leq m$. In case $\Lambda$ is symmetric, $d+1$
is a divisor of $m+1$.

\end{cor}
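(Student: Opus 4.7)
The plan is to transfer the Calabi-Yau Frobenius isomorphism $\Omega_{\Lambda^e}^{m+1}(\Lambda)\cong {}_1\Lambda_{\eta^{-1}}$ in $\Lambda^e-\underline{mod}$ (the latter identification being Lemma \ref{Nakayama aut}(3)) to the left-module stable category by tensoring over $\Lambda$ with an arbitrary left $\Lambda$-module. I first note that if $P^\bullet\to\Lambda$ is a projective bimodule resolution, each $P^i$ is projective as a right $\Lambda$-module, so by self-injectivity of $\Lambda$ the syzygies $\Omega_{\Lambda^e}^n(\Lambda)$ are themselves projective as right $\Lambda$-modules. Consequently $P^\bullet\otimes_\Lambda M\to M$ is a projective resolution of the left $\Lambda$-module $M$, and one obtains a natural isomorphism $\Omega_\Lambda^n(M)\cong \Omega_{\Lambda^e}^n(\Lambda)\otimes_\Lambda M$ in $\Lambda-\underline{mod}$, for every $n\ge 0$.

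Setting $n = m+1$ and combining with the hypothesis, one obtains a natural isomorphism
$$\Omega_\Lambda^{m+1}(M) \;\cong\; {}_1\Lambda_{\eta^{-1}}\otimes_\Lambda M \;\cong\; {}_\eta M \;=\; \mathcal{N}^{-1}(M),$$
or equivalently $\Omega_\Lambda^{-m-1}\cong\mathcal{N}$ as triangulated self-equivalences of $\Lambda-\underline{mod}$. By the characterization recalled just before the corollary, the Calabi-Yau dimension $d$ of $\Lambda-\underline{mod}$ therefore satisfies $d \le m$.

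When $\Lambda$ is symmetric, $D(\Lambda)\cong\Lambda$ as bimodules forces $\eta$ to be inner, so $\mathcal{N}$ is isomorphic to the identity on $\Lambda-\underline{mod}$. Therefore both $\Omega_\Lambda^{-d-1}\cong\mathrm{id}$ and $\Omega_\Lambda^{-m-1}\cong\mathrm{id}$. Since the set $\{k\in\mathbb{Z}_{>0}:\Omega_\Lambda^{-k}\cong\mathrm{id}\}$ is closed under subtraction (as $\Omega_\Lambda$ is an auto-equivalence of the stable category), it consists of the positive multiples of its least element $d+1$, and one concludes that $d+1$ divides $m+1$.

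The most delicate point is to ensure that the bimodule stable isomorphism indeed transfers to a natural isomorphism of \emph{triangulated} functors after tensoring with $M$. This reduces to the observation that tensor product over $\Lambda$ with any $\Lambda$-bimodule which is projective as a right $\Lambda$-module descends to a well-defined triangulated self-equivalence of $\Lambda-\underline{mod}$, so that a stable bimodule isomorphism produces a natural stable isomorphism of the induced functors on left modules.
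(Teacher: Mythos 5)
Your proposal is correct and follows essentially the same route as the paper: both transfer the bimodule isomorphism $\Omega_{\Lambda^e}^{m+1}(\Lambda)\cong {}_1\Lambda_{\eta^{-1}}$ to the stable module category via the natural isomorphism $\Omega_{\Lambda^e}^{k}(\Lambda)\otimes_\Lambda-\cong\Omega_\Lambda^{k}$, invoke the characterization of the Calabi-Yau dimension through $\Omega_\Lambda^{-d-1}\cong\mathcal{N}$ to get $d\leq m$, and in the symmetric case deduce divisibility from the minimality of $d+1$ among exponents $k$ with $\Omega_\Lambda^{-k}\cong\mathrm{id}$. The only difference is that you spell out the justification (right-projectivity of the bimodule syzygies) for the tensor-syzygy isomorphism, which the paper simply asserts.
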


\begin{proof} For each $k\geq 0$ the functors

$$\Omega_{\Lambda^e}^k(\Lambda)\otimes_{\Lambda}-: \Lambda-\underline{mod}
\longrightarrow \Lambda-\underline{mod}$$ and

$$\Omega_{\Lambda}^k: \Lambda-\underline{mod}\longrightarrow
\Lambda-\underline{mod}$$ are naturally isomorphic as triangulated
functors. If $\Omega_{\Lambda^e}^{m+1}(\Lambda)\cong _1
\Lambda_{\eta^{-1}}$, then $\Omega_{\Lambda}^{m+1}\cong
_1\Lambda_{\eta^{-1}}\otimes_{\Lambda}-\cong _{\eta}(-)$ as
triangulated functors $\Lambda-\underline{mod}\longrightarrow
\Lambda-\underline{mod}$. By taking quasi-inverse functors, we then
have $\Omega_{\Lambda}^{-m-1}\cong _{\eta^{-1}}(-)$ and, by
\cite{ESk}, $\Lambda-\underline{mod}$ is $d$-CY for some natural
number $d\leq m$.

In case $\Lambda$ is symmetric (i.e. $\eta$ is inner), the CY
dimension of $\Lambda-\underline{mod}$ is $d$ when $d+1$ is the
order of $\Omega_{\Lambda}$ in the stable Picard group of $\Lambda$,
i.e., in the group of  natural isoclasses of triangulated
self-equivalences $\Lambda-\underline{mod}\longrightarrow
\Lambda-\underline{mod}$. The fact that
$\Omega_{\Lambda}^{-m-1}\cong _{\eta^{-1}}(-)\cong
id_{\Lambda-\underline{mod}}$ implies that $d+1$ divides $m+1$.

\end{proof}

\begin{rem} \rm To the best of our knowledge, it is not known whether the
Frobenius CY dimension of $\Lambda$ coincides with the CY dimension
of $\Lambda-\underline{mod}$.
\end{rem}

Recall that if $B=\oplus_{n\in \mathbb{Z}}B_n$ is a graded algebra,
then a graded (left or right) $B$-module $V= \oplus_{n\in
\mathbb{Z}}V_n$ is said to be locally finite dimensional graded left
(resp. right) $B$-modules is denoted by $B$-lfgr (lfgr-$B$). We have
the canonical duality $D: B-lfgr\longrightarrow lefgr B$, which is
inverse of itself. Slightly diverting from the terminology of
\cite{ES}, we say that the graded algebra $B$ is graded Frobenius in
case the category $B$-lfgr is a Frobenius category, which is
equivalent to say that the injective and the projective objects
coincide in $B$-lfgr. Clearly, a graded Frobenius algebra in the
sense of \cite{ES} is graded Frobenius in our sense.

\begin{teor}\label{Eu-Schedler}(Eu-Schedler) Let $\Lambda$ be a Calabi-Yau Frobenius
algebra of dimension $m$ and let $M$ be any $\Lambda$-bimodule.
There are isomorphisms of graded right
$\underline{HH}^*(\Lambda)$-modules:

\begin{enumerate}

    \item $\underline{HH}_{-*}(\Lambda, M)[-m]\cong \underline{HH}^*(\Lambda,M)$

    \item $\underline{HH}^*(\Lambda,M)\cong D(\underline{HH}^*(\Lambda,D(M)))[-m]=
    D(\underline{HH}^*(\Lambda,D(M))[m])$

    \item $\underline{HH}^*(\Lambda)\cong D(\underline{HH}^*(\Lambda))[-2m-1]=
    D(\underline{HH}^*(\Lambda)[2m+1])$

\end{enumerate}

In particular $\underline{HH}^*(\Lambda)$ is a graded Frobenius
algebra.

\end{teor}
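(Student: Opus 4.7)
The plan is to prove the three assertions in the order $(1)\Rightarrow(2)\Rightarrow(3)$ and then read off the graded Frobenius conclusion from $(3)$. The key tool for $(1)$ is Hom-tensor duality. I would take $P^\bullet$ to be a minimal complete projective resolution of $\Lambda$ as a $\Lambda^e$-module and set $P^{\bullet*}:=\text{Hom}_{\Lambda^e}(P^\bullet,\Lambda^e)$. Since each $P^i$ is finitely generated projective over $\Lambda^e$, there is a natural isomorphism of complexes $\text{Hom}_{\Lambda^e}(P^\bullet,M)\cong P^{\bullet*}\otimes_{\Lambda^e}M$. As $\Lambda^e$ is self-injective (Lemma~\ref{Nakayama aut}(1)), the contravariant functor $(-)^*=\text{Hom}_{\Lambda^e}(-,\Lambda^e)$ is exact, so $P^{\bullet*}$ is again an acyclic complex of projective bimodules.

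The next step is to identify $P^{\bullet*}$ as a complete resolution of an explicit module. A direct computation of its cocycles using exactness of $P^\bullet$ and $(-)^*$ gives $Z^k(P^{\bullet*})\cong\Omega^k_{\Lambda^e}(\Lambda)^*$ stably; combining the general identity $(\Omega N)^*\cong\Omega^{-1}(N^*)$ with the CY Frobenius identification $\Lambda^*\cong\Omega^{m+1}_{\Lambda^e}(\Lambda)$ (from Lemma~\ref{Nakayama aut}(3) and the definition) yields $Z^k(P^{\bullet*})\cong\Omega^{m+1-k}_{\Lambda^e}(\Lambda)$. In particular $P^{\bullet*}$ and $P^\bullet[-m]$ are two complete projective resolutions of the same bimodule $\Omega^m(\Lambda)$, so the uniqueness of complete resolutions in the homotopy category produces a homotopy equivalence $P^{\bullet*}\simeq P^\bullet[-m]$. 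Applying $-\otimes_{\Lambda^e}M$ and taking cohomology yields
$$\underline{HH}^i(\Lambda,M)=H^i(\text{Hom}_{\Lambda^e}(P^\bullet,M))\cong H^i(P^\bullet[-m]\otimes_{\Lambda^e}M)=\underline{HH}_{m-i}(\Lambda,M),$$
which is assertion $(1)$ at the graded vector space level. Compatibility with the right $\underline{HH}^*(\Lambda)$-action will follow from naturality of Hom-tensor adjunction with respect to the dg endomorphism algebra $B:=\text{End}_{\mathcal{C}_{dg}\Lambda^e}(P^\bullet)$, whose cohomology is $\underline{HH}^*(\Lambda)$.

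Assertion $(2)$ then follows by composing $(1)$ with Remark~\ref{Stable duality}. For $(3)$, I would specialize $(2)$ to $M=\Lambda$, using $D(\Lambda)\cong {}_1\Lambda_\eta$. The CY Frobenius condition $\Omega^{m+1}_{\Lambda^e}(\Lambda)\cong {}_1\Lambda_{\eta^{-1}}$ combined with the fact that ${}_1\Lambda_\eta$ and ${}_1\Lambda_{\eta^{-1}}$ are mutually inverse invertible bimodules forces $D(\Lambda)\cong\Omega_{\Lambda^e}^{-(m+1)}(\Lambda)$ stably. Hence
$$\underline{HH}^i(\Lambda,D(\Lambda))=\underline{\text{Hom}}(\Omega^i\Lambda,D(\Lambda))\cong\underline{\text{Hom}}(\Omega^{i+m+1}\Lambda,\Lambda)=\underline{HH}^{i+m+1}(\Lambda),$$
i.e.\ $\underline{HH}^*(\Lambda,D(\Lambda))\cong\underline{HH}^*(\Lambda)[m+1]$. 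Substituting into the isomorphism of $(2)$ gives $\underline{HH}^*(\Lambda)\cong D(\underline{HH}^*(\Lambda)[m+1])[-m]=D(\underline{HH}^*(\Lambda))[-2m-1]$, which is $(3)$. The graded Frobenius conclusion now reads off: $(3)$ exhibits $\underline{HH}^*(\Lambda)$ as self-dual up to the shift $2m+1$, which is precisely the condition making its category of locally finite-dimensional graded modules Frobenius.

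The main obstacle I expect is not the underlying vector-space identifications but the verification that all isomorphisms are compatible with the right $\underline{HH}^*(\Lambda)$-module structures. These compatibilities are formal consequences of the naturality of Hom-tensor adjunction and of the construction of the module structures via the dg algebra $B$, but they require careful tracking of signs and shifts, particularly where graded commutativity is invoked to convert between left and right module structures.
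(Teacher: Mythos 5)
Your proposal is correct and follows essentially the same route as the paper: identify $P^{\bullet*}=\mathrm{Hom}_{\Lambda^e}(P^\bullet,\Lambda^e)$ with $P^\bullet[-m]$ via the CY Frobenius condition and uniqueness of (minimal) complete resolutions, use Hom-tensor duality for finitely generated projectives to get $(1)$, compose with the stable duality $\underline{HH}_{-*}(\Lambda,M)\cong D(\underline{HH}^*(\Lambda,D(M)))$ for $(2)$, and specialize to $M=\Lambda$ with $\Omega_{\Lambda^e}^{-m-1}(\Lambda)\cong D(\Lambda)$ for $(3)$. The paper likewise reduces the module-structure compatibility to naturality over the dg algebra $B=\mathrm{End}_{\mathcal{C}_{dg}\Lambda^e}(P)$, verifying one instance explicitly, so the point you flag as the main obstacle is handled exactly as you anticipate.
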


\begin{proof} Fix a complete minimal projective resolution $P$ of
$\Lambda$ as $\Lambda$-bimodule. We put

$$P^*: \cdots \longrightarrow Hom_{\Lambda^e}(P^1,\Lambda^e)\longrightarrow Hom_{\Lambda^e}
(P^0,\Lambda^e)\longrightarrow
Hom_{\Lambda^e}(P^{-1},\Lambda^e)\longrightarrow \cdots$$ and

$$D(P): \cdots \longrightarrow D(P^1)\longrightarrow D(P^0)\longrightarrow D(P^{-1})
\longrightarrow \cdots$$

\begin{enumerate}

    \item $P[-m-1]$ is a minimal complete projective resolution of
    $\Omega_{\Lambda^e}^{m+1}(\Lambda)$ and we have $Z^0(P^*)=Hom_{\Lambda^e}(\Lambda, \Lambda^e)\cong _1\Lambda_{\eta^{-1}}\cong
    \Omega_{\Lambda^e}^{m+1}(\Lambda)$ (see Lemma \ref{Nakayama aut}) due to the self-injective
    condition of $\Lambda^e$. Therefore $P^*[-1]$ is also a minimal
    complete projective resolution of
    $\Omega_{\Lambda^e}^{m+1}(\Lambda)$. The uniqueness of the
    minimal complete projective resolution gives that $P[-m-1]\cong
    P^*[-1]$, hence $P\cong P^*[m]$ in the category $\mathcal{C}(\Lambda^e)$ of  complexes of $\Lambda$-bimodules. That gives
    isomorphisms of complexes of $K$-vector spaces $Hom_{\Lambda^e}(P,M)\cong Hom_{\Lambda^e}(P^*[m],M)
    \cong Hom_{\Lambda^e}(P^*,M)[-m]\cong
    P\otimes_{\Lambda^e}M[-m]$, the last one due to the fact that
    there is a natural isomorphism

    $$Hom_{\Lambda^e}(Hom_{\Lambda^e}(Q,\Lambda^e),M)\cong
    Q\otimes_{\Lambda^e}M,$$ for each finitely generated projective
    bimodule $Q$.

    Note that the above isomorphisms of complexes are really
    isomorphisms of left dg modules over the dg algebra
    $B:=End_{\mathcal{C}_{dg}(\Lambda^e)}(P)$, whose homology algebra $H^*(B)$ is isomorphic to $\underline{HH}^*(\Lambda )$ (see the
paragraph following Proposition \ref{graded module Ext(M,N)}). Just
as  a sample, we do the
    last isomorphism. We view $P$ as a complex of left $\Lambda^e$-module  The complex  $P^*=Hom_{\Lambda^e}(P,\Lambda^e)$
    is a dg right $B$-module with  multiplication given by  $f\beta
    :=f\circ\beta$, for all homogeneous elements $f\in P^*$ and $\beta\in B$ .
    It has also a structure  of right $\Lambda^e$-module given by $f(a\otimes b^o):p\rightsquigarrow f(p)(a\otimes
    b^o)$, for all homogeneous elements $p\in P$, $f\in P^*$ and all
    $a,b\in\Lambda$. One readily sees that we have an equality $(f\beta )(a\otimes b^o)=[f(a\otimes
    b^o)]\beta$. By looking now at a right $\Lambda^e$-module as a
    left $\Lambda^e$-module in the usual way, this means that $P^*$
    has a structure of dg $\Lambda^e-B-$bimodule. As a consequence,
    when the  $\Lambda$-bimodule $M$ is viewed as a left
    $\Lambda^e$-module, the
    the complex of $K$-vector spaces
    $Hom_{\Lambda^e}(P^*,M)$ is a left dg $B$-module, with
    multiplication given by the rule $(\beta\psi )(f)=\psi (f\beta )=\psi (f\circ\beta
    )$, for all homogeneous elements $\psi\in
    Hom_{\Lambda^e}(P^*,M)$, $\beta\in B$ and $f\in P^*$. It is
    routinary to see that the canonical isomorphism of complex of $K$-vector spaces

    $$\Psi:P\otimes_{\Lambda^e}M\stackrel{\cong}{\longrightarrow}Hom_{\Lambda^e}(P^*,M),$$
identified by the formula $\Psi_{p\otimes m}(f)= f(p)m$,
    preserves the left multiplication by elements of $B$.

    We then get isomorphisms of graded left $\underline{HH}^*(\Lambda
    )$-modules:

    $$\underline{HH}^*(\Lambda,M)= H^*(Hom_{\Lambda^e}(P,M))\cong
    H^*((P\otimes_{\Lambda^e}M)[-m])=H^{*-m}(P\otimes_{\Lambda^e}M)\cong$$

    $$\underline{HH}_{-*+m}(\Lambda,M)= \underline{HH}_{-(*-m)}(\Lambda,M)=
    \underline{HH}_{-*}(\Lambda,M)[-m]$$

    \item It follows from $1)$ and from the isomorphism of graded
    $\underline{HH}^*(\Lambda)$-module $D(\underline{HH}^*(\Lambda, D(M)))\cong
    \underline{HH}_{-*}(\Lambda,M)$ (see remark \ref{Stable duality}).

    \item Since $\Lambda$ is $m$-Calabi-Yau Frobenius we have $\Omega^{m+1}_{\Lambda^e}(\Lambda )\cong
    Hom_{\Lambda^e}(\Lambda, \Lambda^e)\cong _1\Lambda_{\eta^{-1}}$,
    from which we get $\Omega^{-m-1}_{\Lambda^e}(\Lambda)\cong _1\Lambda_{\eta}\cong
    D(\Lambda)$. The isomorphism of $2$ for $M=\Lambda$ gives then

    $$\underline{HH}^*(\Lambda)\cong D(\underline{HH}^*(\Lambda, \Omega_{\Lambda^e}^{-m-1}(\Lambda ))[m])=
    D(\underline{HH}^{*+m+1}(\Lambda)[m])= D(\underline{HH}^*(\Lambda)[2m+1])$$ The fact
    that $\underline{HH}^*(\Lambda)$ is graded Frobenius is a direct consequence
    of the isomorphism in $3)$.

\end{enumerate}

\end{proof}

\begin{defi} \rm A finite dimensional algebra $\Lambda$ is said to be
\emph{periodic of period $m>0$} if
$\Omega_{\Lambda^e}^m(\Lambda)\cong \Lambda$ in the category of
$\Lambda$-bimodules and $m$ is minimal with that property.

\end{defi}

It is well-known that any periodic algebra is self-injective (see
\cite{BBK} ). In case $R$ is a graded commutative ring and $f\in R$
is a homogeneous element which is not nilpotent, we will denote by
$R_{(f)}$ the localization of $R$ with respect to the multiplicative
subset $\{1,f,f^2,\dots\}$. It is a graded commutative ring where
$deg(\frac{g}{f^n})= deg(g) - n\cdot deg(f)$, for all homogeneous
elements $g\in R$ and all $n\geq 0$. If $M$ is a graded $R$-module
we will denote by $M_{(f)}$ the localization of $M$ at
$\{1,f,f^2,\dots\}$.

\begin{prop}\label{localization} Let $\Lambda$ be a periodic algebra of period $s$ and
let $h\in HH^s(\Lambda)$ be any element represented by an
isomorphism
$\Omega_{\Lambda^e}^s(\Lambda)\stackrel{\sim}{\longrightarrow}\Lambda$.
Suppose that $M$ is a $\Lambda$-bimodule. The following assertions
hold:

\begin{enumerate}

    \item $\underline{HH}^*(\Lambda,M)\cong
    \underline{HH}^*(\Lambda,M)[s]$ and $\underline{HH}_{-*}(\Lambda,M)\cong
    \underline{HH}_{-*}(\Lambda,M)[s]$ as graded
    $\underline{HH}^*(\Lambda)$-modules.

    \item $h$ is an element of $HH^*(\Lambda)$ which is not nilpotent and
    $\underline{HH}^*(\Lambda)$ is isomorphic, as a graded algebra, to  $HH^*(\Lambda)_{(h)}$.

    \item  $\underline{HH}^*(\Lambda,M)$ is isomorphic to $HH^*(\Lambda ,M)_{(h)}$ as a graded $\underline{HH}^*(\Lambda )$-module.

\end{enumerate}

\end{prop}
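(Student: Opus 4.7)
The proof rests on a single key observation: since $h \in HH^s(\Lambda)$ is represented by a stable isomorphism $\Omega_{\Lambda^e}^s(\Lambda) \xrightarrow{\sim} \Lambda$, and since $\Omega_{\Lambda^e}$ is an autoequivalence of $\Lambda^e$-$\underline{mod}$ (as $\Lambda^e$ is self-injective by Lemma \ref{Nakayama aut}), the morphism $\Omega_{\Lambda^e}^n(h):\Omega_{\Lambda^e}^{n+s}(\Lambda)\to\Omega_{\Lambda^e}^n(\Lambda)$ is a stable isomorphism for every $n\in\mathbb{Z}$. Under the identification $\underline{HH}^n(\Lambda,M)\cong\underline{Hom}_{\Lambda^e}(\Omega_{\Lambda^e}^n(\Lambda),M)$, the Yoneda formula recalled before Proposition \ref{graded module Ext(M,N)} makes the right action of $h$ send $\phi$ to $\phi\circ\Omega_{\Lambda^e}^n(h)$, so it is an isomorphism $\underline{HH}^n(\Lambda,M)\xrightarrow{\sim}\underline{HH}^{n+s}(\Lambda,M)$. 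This gives the cohomological half of assertion (1); the homological half is obtained from it by the duality of Remark \ref{Stable duality}.

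For assertion (2), the composition of isomorphisms $\Omega_{\Lambda^e}^{ks}(\Lambda)\xrightarrow{\Omega_{\Lambda^e}^{(k-1)s}(h)}\cdots\xrightarrow{\Omega_{\Lambda^e}^{s}(h)}\Omega_{\Lambda^e}^s(\Lambda)\xrightarrow{h}\Lambda$ represents $h^k$ in $\underline{HH}^{ks}(\Lambda)$ and is itself an isomorphism, so in particular nonzero; because the canonical map $\lambda:HH^{ks}(\Lambda)\to\underline{HH}^{ks}(\Lambda)$ is an isomorphism in positive degree, this shows $h^k\neq 0$ in $HH^{ks}(\Lambda)$ for every $k\geq 1$, and hence $h$ is not nilpotent. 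Since the image of $h$ in $\underline{HH}^*(\Lambda)$ is a unit (its right-multiplication action is bijective by part (1)), the universal property of localization yields a graded algebra homomorphism $\rho:HH^*(\Lambda)_{(h)}\to\underline{HH}^*(\Lambda)$. To prove $\rho$ is surjective, given $\xi\in\underline{HH}^n(\Lambda)$ I choose $k$ with $n+ks\geq 1$, lift $\xi\cdot h^k\in\underline{HH}^{n+ks}(\Lambda)=HH^{n+ks}(\Lambda)$ to $g\in HH^{n+ks}(\Lambda)$, and observe $\rho(g/h^k)=\xi$. For injectivity, suppose $\rho(g/h^k)=0$, so $\lambda(g)=0$; if $\deg g\geq 1$ then $\lambda$ is injective in that degree and $g=0$, while if $\deg g=0$ I rewrite $g/h^k=(gh)/h^{k+1}$ and reduce to the previous case, since $gh\in HH^s(\Lambda)$ satisfies $\lambda(gh)=\lambda(g)\lambda(h)=0$ and therefore vanishes.

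Assertion (3) is proved by the identical template: part (1) shows that the $HH^*(\Lambda)$-action of $h$ is already invertible on $\underline{HH}^*(\Lambda,M)$, so the universal property of localization produces a morphism of graded $HH^*(\Lambda)$-modules $HH^*(\Lambda,M)_{(h)}\to\underline{HH}^*(\Lambda,M)$, which is automatically $\underline{HH}^*(\Lambda)$-linear via the algebra identification of part (2); the surjectivity/injectivity arguments of part (2) transfer verbatim, using only the right action of $h$ and that $\lambda$ is an isomorphism in positive degrees. I expect the main subtlety to lie not in the structural idea but in the degree-zero bookkeeping, where $\lambda$ is neither injective nor surjective, so one must systematically multiply representatives by a power of $h$ to push everything into positive degrees where $\lambda$ becomes an identification.
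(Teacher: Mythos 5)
Your proposal is correct and follows essentially the same route as the paper: you show that $h$ becomes invertible in $\underline{HH}^*(\Lambda)$ (the paper exhibits the explicit inverse $h'$ represented by $\Omega_{\Lambda^e}^{-s}(\widehat{h}^{-1})$, while you argue bijectivity of $-\circ\Omega^n(h)$ directly, which is the same point), deduce the period-$s$ shift isomorphisms, invoke the universal property of localization, prove surjectivity by multiplying into positive degrees where $\lambda$ is an isomorphism, and handle injectivity in degree zero by the substitution $g/h^k=(gh)/h^{k+1}$ exactly as the paper does via $\eta\in\mathcal{P}(\Lambda,M)\Rightarrow h\eta=0$. The only cosmetic divergence is that you obtain the homological half of assertion (1) by the duality of Remark \ref{Stable duality} rather than from the general statement that $h$ acts invertibly on every graded $\underline{HH}^*(\Lambda)$-module; both are valid.
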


\begin{proof} We have already seen in the previous comments that
$\underline{HH}^*(\Lambda)$ is isomorphic to the graded algebra
$\oplus_{n\in
\mathbb{Z}}\underline{Hom}_{\Lambda^e}(\Omega_{\Lambda^e}^n(\Lambda,
\Lambda))$, where the multiplication of homogeneous elements on this
algebra is given by $g\cdot f= g\circ \Omega_{\Lambda^e}^n(f)$. If
now $\widehat{h}:
\Omega_{\Lambda^e}^s(\Lambda)\stackrel{\sim}{\longrightarrow}\Lambda$
is an isomorphism representing $h$, then
$\Omega_{\Lambda^e}^{-s}(\widehat{h}^{-1}):
\Omega_{\Lambda^e}^{-s}(\Lambda)\longrightarrow \Lambda$ represents
an element $h'\in \underline{HH}^{-s}(\Lambda)$. But then $h'\cdot
h=1$ since $h'\cdot h$ is represented by
$\Omega_{\Lambda^e}^{-s}(\widehat{h}^{-1})\circ
\Omega_{\Lambda^e}^{-s}(\widehat{h})=
\Omega_{\Lambda^e}^{-s}(\widehat{h}^{-1}\widehat{h})=
\Omega_{\Lambda^e}^{-s}(1_{\Omega_{\Lambda^e}^s(\Lambda)})=
1_{\Lambda}$.

The above paragraph shows that $h$ is invertible (of degree $s$) in
$\underline{HH}^*(\Lambda)$, from which it follows that
multiplication by $h$ gives an isomorphism
$Y\stackrel{\sim}{\longrightarrow}Y[s]$, for each graded
$\underline{HH}^*(\Lambda)$-module $Y$ (here we have used that, for
$Char(K)\neq2$, the period $s$ is even, cf. \cite{ES}  Theorem
2.3.47]).

Since the multiplication of homogeneous elements of degree $>0$ is
the same in $HH^*(\Lambda )$ and in $\underline{HH}^*(\Lambda )$ and
$h$ in invertible in this latter algebra it follows that $h$ is not
nilpotent in $HH^*(\Lambda)$. On the other hand, the universal
property of the module of quotients gives a unique morphism of
graded $HH^*(\Lambda )$-modules

\begin{center}
$\Phi:HH^*(\Lambda , M)_{(h)}\longrightarrow\underline{HH}^*(\Lambda
,M)$
\end{center}
which takes the fraction $\frac{\eta}{h^n}\rightsquigarrow
h'^n\eta$, where $h'$ is the inverse of $h$ in
$\underline{HH}^*(\Lambda )$. It is clear that the homogeneous
elements of degree $\geq 0$ are in the image of $\Phi$. On the other
hand, if $\xi\in\underline{HH}^{-j}(\Lambda )$, with $j>0$, then
there is a $k> 0$ such that $ks>j$. Fixing such a $k$, we have that
$\eta:=h^k\xi\in\underline{HH}^{ks-j}(\Lambda ,M)=HH^{ks-j}(\Lambda
,M)$ and, clearly, the equality $\Phi (\frac{\eta}{h^k})=\xi$ holds.
Therefore $\Phi$ is surjective. Moreover $Ker(\Phi )$ consists of
those fractions $\frac{\eta}{h^n}$ such that $h'^n\eta =0$ in
$\underline{HH}^*(\Lambda ,M)$. This is in turn equivalent to say
that $\eta=0$ in $\underline{HH}^*(\Lambda ,M)$ for $h'$ is
invertible in $\underline{HH}^*(\Lambda )$. That is,  $\eta$ is in
the kernel of the canonical map $\lambda_{\Lambda ,M}:HH^*(\Lambda
,M)\longrightarrow\underline{HH}^*(\Lambda ,M)$. Hence we get that
$\eta\in\mathcal{P}(\Lambda ,M)$, which implies that $h\eta =0$ in
$HH^*(\Lambda ,M)$. It follows that
$\frac{\eta}{h^n}=\frac{h\eta}{h^{n+1}}=0$ and so $\Phi$ is also
injective. Finally, in case $\Lambda =M$, the map $\Phi$ is a
homomorphism of graded algebras, and the proof is complete.

\end{proof}

Note that if $\Lambda$ is symmetric, then $\Lambda$ is periodic of
period $s$ exactly when it is $(s-1)$-Calabi-Yau Frobenius. We then
have

\begin{cor} If $\Lambda$ is a symmetric periodic algebra of period
$s$ and $M$ is a $\Lambda$-bimodule, then:

\begin{enumerate}

    \item The multiplicative structure of
    $\underline{HH}^*(\Lambda)$ is determined by that of
    $HH^*(\Lambda)$.

    \item The structures of $\underline{HH}^*(\Lambda,M)$ and $\underline{HH}_{-*}(\Lambda,M)$
    as graded $\underline{HH}^*(\Lambda)$-modules and the structure
    of $HH_{-*}(\Lambda,M)$ as graded $HH^*(\Lambda)$-module are
    determined by the structure of $HH^*(\Lambda,M)$ as graded
    $HH^*(\Lambda)$-module.

\end{enumerate}

\end{cor}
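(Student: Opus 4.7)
The plan is to deduce both assertions as formal consequences of the localization Proposition \ref{localization} combined with the Eu--Schedler duality of Theorem \ref{Eu-Schedler}, using the observation recorded just before the statement that a symmetric algebra is periodic of period $s$ precisely when it is $(s-1)$-Calabi--Yau Frobenius. Throughout I write $m := s-1$, and I fix a distinguished element $h \in HH^s(\Lambda)$ represented by an isomorphism $\Omega^s_{\Lambda^e}(\Lambda) \stackrel{\sim}{\to} \Lambda$.

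For assertion (1), I would invoke Proposition \ref{localization}(2), which furnishes an isomorphism of graded algebras $\underline{HH}^*(\Lambda) \cong HH^*(\Lambda)_{(h)}$. Since the localization is built functorially from the multiplicative structure of $HH^*(\Lambda)$, this is all that is needed. For the first half of (2), Proposition \ref{localization}(3) gives $\underline{HH}^*(\Lambda,M) \cong HH^*(\Lambda,M)_{(h)}$ as graded $\underline{HH}^*(\Lambda)$-modules, settling the case of stable cohomology with coefficients. For $\underline{HH}_{-*}(\Lambda,M)$ I would then apply Theorem \ref{Eu-Schedler}(1), which yields an isomorphism of graded $\underline{HH}^*(\Lambda)$-modules $\underline{HH}_{-*}(\Lambda,M) \cong \underline{HH}^*(\Lambda,M)[m]$, reducing this case to the one just handled.

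The only remaining task, and the one that needs a bit of care, is the claim that $HH_{-*}(\Lambda,M)$ as a graded $HH^*(\Lambda)$-module is determined by $HH^*(\Lambda,M)$. I would split the argument by degree. In strictly negative degrees, i.e.\ on the components $HH_i(\Lambda,M)$ with $i > 0$, the canonical map $\underline{HH}_i(\Lambda,M) \to HH_i(\Lambda,M)$ is an isomorphism, and, thanks to the commutative diagram of Proposition \ref{graded module Tor(X,M)}, the $HH^*(\Lambda)$-action in strictly positive cohomological degrees factors through $\lambda_{\Lambda,\Lambda} \colon HH^*(\Lambda) \to \underline{HH}^*(\Lambda)$, which is an isomorphism in positive degrees. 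Hence this part of the module structure is inherited from $\underline{HH}_{-*}(\Lambda,M)$, already under control. In degree zero, $HH_0(\Lambda,M) = M/[\Lambda,M]$: by the very definition given in subsection 2.4, the action of $HH^{>0}(\Lambda)$ is zero while the action of $HH^0(\Lambda) = Z(\Lambda)$ is the tautological one coming from the bimodule structure of $M$, so it is determined by $M$ alone.

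The main obstacle, such as it is, lies in this delicate degree-zero behaviour, where stable and classical (co)homology genuinely disagree: all other degrees are handled uniformly by the localization isomorphism, but in degree zero one has to appeal separately to the ad hoc definition of the $HH^*(\Lambda)$-action on $HH_0(\Lambda,M)$ recorded in subsection 2.4. Once this point is accounted for, the corollary follows without further calculation.
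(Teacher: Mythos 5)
Your proposal is correct and follows essentially the same route as the paper, whose proof is simply the observation that both assertions are direct consequences of Theorem \ref{Eu-Schedler} and Proposition \ref{localization} once one notes that a symmetric periodic algebra of period $s$ is $(s-1)$-Calabi--Yau Frobenius. The only difference is that you spell out the degree-zero bookkeeping for $HH_{-*}(\Lambda,M)$ that the paper leaves implicit, and you do so correctly.
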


\begin{proof} Since $\Lambda$ is CY Frobenius, the two assertions
are a direct consequence of the last two propositions.

\end{proof}

\section{The algebra $\Lambda =P(\mathbb{L}_n)$}

\subsection{A dualizable basis for the algebra}
\label{subsection.dualizable basis}

 In the rest of the paper, unless
otherwise stated, $\Lambda:=P(\mathbb{L}_n)$ is the preprojective
algebra associated to the generalized Dynkin quiver $\mathbb{L}_n$.
Then its quiver $Q$ is

\vspace{0.5cm}

\def\VSep{6pt} 

\begin{center}
$$\epsilon \psarc{-}(1,0.1){0.5}{35}{330}\kern1.5cm 1\quad
\begin{array}{ccc}
a_1\\
\kern-1cm\psline[linecolor=black]{->}(1.3,0)\psline[linecolor=black]{<-}(0,0.2)(1.3,0.2)\\
\overline{a}_1
\end{array}\qquad 2\quad\begin{array}{ccc}
a_2\\
\kern-1cm\psline[linecolor=black]{->}(1.3,0)\psline[linecolor=black]{<-}(0,0.2)(1.3,0.2)\\
\overline{a}_2
\end{array}\qquad 3\quad\ldots\quad\begin{array}{ccc}
a_{n-1}\\
\kern-1cm\psline[linecolor=black]{->}(1.3,0)\psline[linecolor=black]{<-}(0,0.2)(1.3,0.2)\\
\overline{a}_{n-1}
\end{array}\quad n$$
\end{center}

\vspace{0.5cm}

In \cite{BES} the authors used the fact that $\Lambda$ is
self-injective to prove that $\Lambda$ is a periodic algebra. Note
that the path algebra $KQ$ admits an obvious involutive
anti-isomorphism $(-)^-:KQ\longrightarrow KQ$
($x\rightsquigarrow\bar{x}$) which fixes the vertices and the arrow
$\epsilon$ and maps $a_i\rightsquigarrow\bar{a}_i$ and
$\bar{a}_i\rightsquigarrow a_i$, for all $i=1,...,n-1$. It clearly
preserves the relations for $\Lambda$, and hence it induces another
involutive anti-isomorphism $(-)^-:\Lambda\longrightarrow\Lambda$.
We shall call it the \emph{canonical (involutive) antiautomorphism}
of $\Lambda$.

The next proposition shows that we can apply to $\Lambda$ the
results in the previous subsection. It also fixes the basis of
$\Lambda$ with which we shall work all through the paper.

\begin{prop} \label{prop.fixed basis to work}
Let $\Lambda=P(\mathbb{L}_n)$ be the preprojective algebra of type L
and put $B=\bigcup_{i,j}e_iBe_j$, where

\begin{enumerate}
\item[a)] $e_1Be_1= \{e_1, \epsilon, \epsilon^2, \dots,
\epsilon^{2n-1}\}$

\item[b)] $e_1Be_j= \{a_1\cdots a_{j-1}, \epsilon a_1\cdots a_{j-1},
\epsilon^2 a_1\cdots a_{j-1}, \dots, \epsilon^{2(n-j)+1}a_1\cdots
a_{j-1}\}$ in case $j\neq 1$

\item[c)] $e_iBe_j= \{a_i\cdots a_{j-1}, a_i\cdots a_j\bar{a}_j, \dots,
a_i\cdots a_{n-1}\bar{a}_{n-1}\cdots\bar{a}_{j}\}\bigcup $

$\hspace{1.3cm}\{\bar{a}_{i-1}\cdots \bar{a}_1\epsilon a_1\cdots
a_{j-1}, \bar{a}_{i-1}\cdots \bar{a}_1\epsilon^3 a_1\cdots a_{j-1},
\dots,$

$\hspace{1.3cm}(-1)^{s_{ij}}\bar{a}_{i-1}\cdots
\bar{a}_1\epsilon^{2(n-j)+1} a_1\cdots a_{j-1}\}$

 where $s_{ij}=0$ for $i\neq j$ and $s_{ii}=\frac{i(i-1)}{2}$,
whenever $1<i\leq j\leq n$ (here we convene that $a_i...a_{j-1}=e_i$
in case $i=j$).

\item[d)] $e_iBe_j=\{\bar{b}:$ $b\in e_jBe_i\}$ in case $i>j$,
\end{enumerate}

Then $B$ is a dualizable basis of $\Lambda$. In particular,
$\Lambda$ is a symmetric algebra.

\end{prop}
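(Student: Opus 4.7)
The plan is to apply Lemma \ref{lema:basis for graded-Schurian algebras}, so I need to verify its hypotheses and then check the arrow condition $a^*a=\omega_{t(a)}$ for each $a\in Q_1$. The natural length grading on $KQ$ descends to $\Lambda$ because the preprojective relations are homogeneous, and $\Lambda$ is self-injective by \cite{BES}. The Nakayama permutation is the identity: the involutive antiautomorphism $(-)^-$ that swaps $a_i\leftrightarrow\bar a_i$ and fixes $\epsilon$ and the vertices shows that $\dim e_i\Lambda e_i=\dim e_j\Lambda e_j$ for all $i,j$ and, together with the explicit form of the candidate basis, forces $\mathrm{Soc}(e_i\Lambda)\subseteq e_i\Lambda e_i$, hence $\nu(i)=i$.

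Next I would prove that $B$ is a $K$-basis of $\Lambda$ and simultaneously that $\dim_K e_i\Lambda_ne_j\leq 1$ for all $i,j,n$. The strategy is to use the defining relations $\epsilon^2+a_1\bar a_1=0$ at vertex $1$, $\bar a_{i-1}a_{i-1}+a_i\bar a_i=0$ at vertex $i$ for $1<i<n$, and $\bar a_{n-1}a_{n-1}=0$ at vertex $n$, to rewrite any path in $e_i\Lambda e_j$ as a scalar multiple of a unique element of the listed set (each occurrence of $a_k\bar a_k$ gets replaced by $-\bar a_{k-1}a_{k-1}$ or $-\epsilon^2$). A comparison with the dimension formula for $P(\mathbb{L}_n)$ from \cite{BES} shows that no cancellations occur, proving both that $B$ is a basis and that each homogeneous piece $e_i\Lambda_ne_j$ is at most one-dimensional. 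A byproduct is the identification of the socle representatives: $\omega_1=\epsilon^{2n-1}$, and for $i>1$, $\omega_i=(-1)^{i(i-1)/2}\bar a_{i-1}\cdots\bar a_1\epsilon^{2(n-i)+1}a_1\cdots a_{i-1}$, these being precisely the unique basis elements of top degree $2n-1$ in each $e_iBe_i$.

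The main step is the verification that $a^*a=\omega_{t(a)}$ for each arrow. For $\epsilon$ one has $\epsilon^*=\epsilon^{2n-2}$ and then $\epsilon^*\epsilon=\epsilon^{2n-1}=\omega_1$. For $a_i$ one determines $a_i^*$ as the unique homogeneous element of $e_{i+1}\Lambda e_i$ satisfying $a_ia_i^*=\omega_i$, using iteratively the relation at vertex $i$ to absorb the leading $a_i$. Then $a_i^*\cdot a_i$ is computed by the \emph{same} relation at vertex $i+1$, and one sees that the combined sign coming from the replacements $a_k\bar a_k\leadsto -\bar a_{k-1}a_{k-1}$ and from the factor $(-1)^{i(i-1)/2}$ in $\omega_i$ matches exactly the factor $(-1)^{(i+1)i/2}$ appearing in $\omega_{i+1}$. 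The case of $\bar a_i$ is dual via $(-)^-$. The main obstacle here is the sign bookkeeping: the seemingly mysterious exponent $\frac{i(i-1)}{2}$ in the definition of $B$ is precisely engineered so that the telescoping signs, collected when commuting an arrow past the socle representative, conspire to give $\omega_{t(a)}$ on the nose.

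Once the arrow condition holds, Lemma \ref{lema:basis for graded-Schurian algebras} guarantees that the Nakayama form $(-,-)$ associated to $B$ is symmetric, i.e.\ $B$ is dualizable. Finally, symmetry of $(-,-)$ combined with the identity $(a,b)=(\eta^{-1}(b),a)$ from the proof of Proposition \ref{prop.Nakayama form from basis} gives $(\eta^{-1}(b),a)=(b,a)$ for all $a,b\in\Lambda$, so nondegeneracy forces $\eta=1_\Lambda$, and $\Lambda$ is symmetric.
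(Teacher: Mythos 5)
Your proposal is correct and follows essentially the same route as the paper: reduce to Lemma \ref{lema:basis for graded-Schurian algebras} after checking that $B$ is a basis with $\dim e_i\Lambda_ne_j\le 1$ (via rewriting with the relations and the known dimensions), and then verify $a^*a=\omega_{t(a)}$ for each arrow, with the exponent $\frac{i(i-1)}{2}$ in $\omega_i$ absorbing the telescoping signs exactly as in the paper's computation. One inessential slip: the antiautomorphism $(-)^-$ gives $\dim e_i\Lambda e_j=\dim e_j\Lambda e_i$, not $\dim e_i\Lambda e_i=\dim e_j\Lambda e_j$ (the latter is false, since $\dim e_i\Lambda e_i=2(n-i)+2$); but this is not needed, as $\nu=\mathrm{id}$ already follows from your correct observation that the top-degree socle element of $e_i\Lambda$ lies in $e_i\Lambda e_i$.
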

\begin{proof}

Note that $e_iBe_j$ contains, at most, one element of a given
degree. In order to see that $B$ is a basis of $\Lambda$ we just
need to see that all the paths in $e_iBe_j$ are nonzero and that
they generate $e_i\Lambda e_j$ as a $K$-vector space. Note that then
$dim(e_i\Lambda_k e_j)\leq 1$, for all $i,j\in Q_0$ and $k\geq 0$,
and Lemma \ref{lema:basis for graded-Schurian algebras} can be
applied.

Suppose that we have already proved that $B$ is a basis of
$\Lambda$.  We claim that the condition on the parallel paths holds.
Indeed $\Lambda$ is a graded algebra and, given $i,j\in Q_0$ and
$n\geq 0$ integer, there is at most one element in $e_iBe_j$ of
degree $n$. It follows that any path $p:i\rightarrow...\rightarrow
j$ which is not in $I$ satisfies that  $p-\lambda q\in I$, for some
$0\neq\lambda\in K$, where $q\in e_iBe_j$ is the only element in
$e_iBe_j$ of degree equal to $\text{length}(p)$. But   the shape of
the relations which generate $I$ implies that $\lambda =(-1)^s$, for
some integer $s$. Therefore, given two parallel paths $p$ and $q$ of
equal length which do not belong to $I$, one has that either $p-q$
or $p+q$ belong to $I$.

We now pass  to check that all the paths in $e_iBe_j$ are nonzero
and that they generate $e_i\Lambda e_j$ as a $K$-vector space.
Assume that $i, j \in Q_0$ are vertices such that $i\leq j$. The
antiautomorphism $(-)^-$ given before guarantees that once we have a
basis for $e_iBe_j$, the remaining cases, $e_jBe_i$, can be
described by adding bars to the monomials obtained for $e_iBe_j$.

Observe that for each vertex $i\neq 1$ we have, up to sign, a unique
cycle of minimum length, namely $a_i\bar{a}_i$. However for the
vertex $i=1$ we do not only have the cycle $a_1\bar{a}_1=
-\epsilon^2$ but also the loop $\epsilon$.

Let $0\neq b$ be a monomial of a fixed length starting at $i$ and
ending at $i+s$. The previous comment tell us that $b$ contains
either an even number or an odd number of arrows of type $\epsilon$.

In the first case, the equality $(\bar{a}_{i-1}a_{i-1})a_i \cdots
a_{i+s-1}= (-1)^s a_i \cdots a_{i+s-1}(a_{i+s}\bar{a}_{i+s})$ shows
that $b$ has at most $n-i$ non-bar letters and $n-(s+i)$ bar
letters. Thus we can set as a basis element the non-zero path $b=
a_i \cdots a_{i+s+j}\bar{a}_{i+s+j}\cdots \bar{a}_{i+s}$ ($j\leq
n-1-i-s$), that is, where all the bar letters are to the right.

On the contrary, if $b$ contains and odd number of $\epsilon$
arrows, we have that

$$b= (a_i \bar{a}_i)(\bar{a}_{i-1}\cdots
\bar{a}_1\epsilon^{2t-1}a_1\cdots a_{i+s-1})=
(-1)^i(\bar{a}_{i-1}\cdots \bar{a}_1\epsilon^{2t+1}a_1\cdots
a_{i+s-1})$$ which is, up to sign, equal to

$$\bar{a}_{i-1}\cdots \bar{a}_1\epsilon (a_1\cdots a_t
\bar{a}_t\cdots \bar{a}_1a_1\cdots a_{i+s-1})$$ But notice that the
arrows between brackets form a path with an even number of
$\epsilon$ arrows which is in time, up to sign, equal to $a_1 \cdots
a_{t+i+s-1}\bar{a}_{t+i+s-1}\cdots \bar{a}_{i+s}$ . Hence we can
conclude that $\bar{a}_{i-1}\cdots \bar{a}_1\epsilon^{2t+1}a_1\cdots
a_{i+s-1}$ is a non zero path if and only if $0 \leq t\leq n-
(s+i)$. Thus the sets given in the statement are in fact a basis of
$\Lambda$.

It remains to prove that $B$ is a dualizable basis. This task is
reduced to prove that $a^*a=w_{t(a)}$, for each $a\in Q_1$. We have
$\omega_{i(\epsilon )}= \epsilon^{2n-1}$, hence $\epsilon^*=
\epsilon^{2n-2}$ and we clearly have $\epsilon^*\epsilon=
\omega_{i(\epsilon )}$.

For $a_i$ (i=1, \dots, n-1) we have

$$a_i[\bar{a}_i\cdots \bar{a}_1\epsilon^{2(n-i-1)+1}a_1\cdots
a_{i-1}]=(-1)^i \bar{a}_{i-1}\cdots
\bar{a}_1\epsilon^{2(n-i)+1}a_1\cdots a_{i-1}=$$

$$(-1)^i(-1)^{\frac{i(i-1)}{2}}\omega_i=
(-1)^{\frac{i(i+1)}{2}}\omega_i=(-1)^{\frac{i(i+1)}{2}}\omega_{i(a_i)}.$$
Then $a_i^*=(-1)^{\frac{i(i+1)}{2}}\bar{a}_i\cdots
\bar{a}_1\epsilon^{2(n-i-1)+1}a_1\cdots a_{i-1}$ and therefore

$$a_i^*a_i= (-1)^{\frac{i(i+1)}{2}}\bar{a}_i\cdots
\bar{a}_1\epsilon^{2(n-i-1)+1}a_1\cdots a_{i-1}a_i=
\omega_{i+1}=\omega_{t(a_i)}$$

The argument is symmetric for the arrows $\bar{a_i}$ and therefore
the basis $B$ is dualizable.

\end{proof}

\begin{rem}\rm
If one modifies the basis $B$ of proposition \ref{prop.fixed basis
to work}, by putting $\omega_i=\bar{a}_{i-1}\cdots \bar{a}_1
\epsilon^{2(n-i)+1}$ $a_1\cdots a_{i-1}$, for all $i=1, \dots, n$,
then the resulting basis is no longer dualizable. Indeed the proof
of the lemma shows that $a_i^*=(-1)^i\bar{a}_i\bar{a}_{i-1}\cdots
\bar{a}_1 \epsilon^{2(n-i-1)+1}a_1\cdots a_{i-1}$ in the new
situation, and then $a_i^*a_i=(-1)^iw_{i+1}$.
\end{rem}

By \cite{BES}, we know that the third syzygy of $\Lambda$ as a
bimodule is isomorphic to $_1\Lambda_\tau$, for some $\tau\in
Aut(\Lambda )$ such that $\tau^2=id_\Lambda$. Our emphasis on
choosing a dualizable basis on $\Lambda$ comes from the fact that it
allows a very precise determination of $\tau$. Indeed, combining
results of \cite{BES} and \cite{ES3}, we know that if $B$ is a
dualizable basis, then the initial part of the minimal projective
resolution of $\Lambda$ as a bimodule is:

$$0\rightsquigarrow
N\stackrel{\iota}{\hookrightarrow}P\stackrel{R}{\longrightarrow}Q\stackrel{\delta}{\longrightarrow}P\stackrel{u}{\longrightarrow}\Lambda\rightarrow
0,$$ where $P=\oplus_{i\in Q_0}\Lambda e_i\otimes e_i\Lambda$,
$Q=\oplus_{a\in Q_1}\Lambda e_{i(a)}\otimes e_{t(a)}\Lambda$ and $N$
is the $\Lambda$-subbimodule of $P$ generated by the elements
$\xi_i=\sum_{x\in e_iB}(-1)^{deg(x)}x\otimes x^*$, where $B$ is any
given basis of $\Lambda$ consisting of paths and negative of paths
which contains the vertices, the arrows and a basis of $Soc(\Lambda
)$. Here
 $\iota$ is the inclusion, $u$ is the multiplication map and $R$
and $\delta$  are as in proposition \ref{projetive resolution}
below.

The following result was proved in \cite{BES}.

\begin{lema}[see \cite{BES}, Proposition 2.3] \label{lema.correction of BES proposition}
Let $B$ be a dualizable basis of $\Lambda$, let $N$ be the
$\Lambda$-bimodule mentioned above and let $\tau\in Aut(\Lambda )$
be the only automorphism of $\Lambda$ such that $\tau (e_i)=e_i$ and
$\tau (a)=-a$, for all $i\in Q_0$ and $a\in Q_1$. There is an
isomorphism of $\Lambda$-bimodules $\phi:
_1\Lambda_\tau\stackrel{\cong}{\longrightarrow}N$ mapping
$e_i\rightsquigarrow\xi_i$, for each $i\in Q_0$.
\end{lema}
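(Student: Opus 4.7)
The plan is to exhibit $\phi$ via the explicit formula
\[
\phi(\mu) := \mu\,\xi, \qquad \text{where } \xi := \sum_{i\in Q_0}\xi_i\in P.
\]
Since each $\xi_i$ lies in $e_iPe_i$, one gets $\phi(e_i)=e_i\xi=\xi_i$, which matches the required correspondence on idempotents. The image is automatically contained in $N$, because $\phi(\mu)=\sum_i(\mu e_i)\xi_i$ is a left $\Lambda$-combination of the generators $\xi_i$ of $N$.

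Next I would check that $\phi$ is a bimodule homomorphism $_1\Lambda_\tau\longrightarrow N$. Left $\Lambda$-linearity is immediate from the formula, but right $\tau$-twisted linearity amounts to the non-trivial identity
\[
\tau(\beta)\,\xi \;=\; \xi\,\beta \qquad \text{for all } \beta\in\Lambda.
\]
This is the main obstacle of the proof. By linearity one may assume $\beta$ to be homogeneous of degree $d$, so that $\tau(\beta)=(-1)^d\beta$. Expanding $\beta b=\sum_c(\beta b,c^*)c$ in the basis $B$ and using the symmetry of the associated Nakayama form (available precisely because $B$ is dualizable), together with $b^{**}=b$ from Lemma \ref{lema:basis for graded-Schurian algebras}, one rewrites the inner sum $\sum_b(\beta b,c^*)b^*$ as the expansion $c^*\beta=\sum_b(b,c^*\beta)b^*$ of $c^*\beta$ in the dual basis. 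Carrying the sign $(-1)^{\deg b}=(-1)^{\deg c - d}$ through this rearrangement produces exactly the global factor $(-1)^d$ that reconciles $\beta\xi$ with $\xi\beta$, and gives the identity. This is essentially the usual Casimir identity $\lambda\,\omega=\omega\,\lambda$ for $\omega=\sum_b b\otimes b^*$, corrected by the degree-graded sign appearing in $\xi$.

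Granted the signed Casimir identity, surjectivity of $\phi$ onto $N$ follows because its image contains each $\xi_i=\phi(e_i)$ and is closed under both sides of the $\Lambda$-action on $P$ (left trivially; right via $\phi(\mu)\beta=\phi(\mu\tau(\beta))$), hence contains the sub-bimodule of $P$ generated by the $\xi_i$, which is $N$. For injectivity, decompose $\mu=\sum_i\mu_i$ with $\mu_i\in\Lambda e_i$; the summands $\mu_i\xi_i$ of $\phi(\mu)$ lie in distinct direct summands $\bigoplus_k\Lambda e_k\otimes e_k\Lambda e_i$ of $P$, distinguished by the right-most idempotent of the second tensor factor, so $\phi(\mu)=0$ forces $\mu_i\xi_i=0$ for each $i$. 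Writing $\mu_i\xi_i=\sum_{x\in e_iB}(-1)^{\deg x}\mu_i x\otimes x^*$ and exploiting the linear independence of the dual-basis family $\{x^*:x\in e_iBe_k\}$ for each fixed $k$, one deduces $\mu_i x=0$ for every $x\in e_iB$; taking $x=e_i$ yields $\mu_i=0$ and hence $\mu=0$.
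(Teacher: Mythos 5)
Your argument is correct, and it is worth noting that the paper itself does not prove this lemma: it is quoted from \cite{BES} (Proposition 2.3), with Remark \ref{rem.Eus-dualizable-basis} only explaining where the dualizability hypothesis is really used. Your proof is therefore a genuinely self-contained alternative. The heart of it, the signed Casimir identity $\xi\beta=\tau(\beta)\xi$ for $\xi=\sum_{b\in B}(-1)^{\deg b}\,b\otimes b^*$, is exactly the ``global'' form of the arrow-level identities $a(ya)^*=y^*$ and $(ay)^*a=y$ of \cite{BES}[Lemma 2.4], which Remark \ref{rem.Eus-dualizable-basis} singles out as the crucial point; your derivation via $(\beta b,c^*)=(b,c^*\beta)$ and the coincidence of left and right dual bases makes transparent that symmetry of the Nakayama form (equivalently, conditions (2)--(3) of Lemma \ref{lema:basis for graded-Schurian algebras}) is precisely what is needed, and the sign $(-1)^{\deg c-\deg\beta}$ bookkeeping is right because $B$ is homogeneous. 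The remaining steps are also sound: the image of $\phi(\mu)=\mu\xi$ is a sub-bimodule (right stability via $\phi(\mu)\beta=\phi(\mu\tau(\beta))$, with $\tau$ bijective) containing the generators $\xi_i$, hence equals $N$; and injectivity follows from the decomposition $P e_i=\bigoplus_k\Lambda e_k\otimes e_k\Lambda e_i$ together with the linear independence of $\{x^*:x\in e_iBe_k\}$ in $e_k\Lambda e_i$, which reduces $\phi(\mu)=0$ to $\mu_i e_i=0$. Compared with the arrow-by-arrow verification implicit in \cite{BES}, your route buys a cleaner, coordinate-free justification of the twist by $\tau$ and an explicit injectivity argument that \cite{BES} leaves to the reader.
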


\begin{rem} \label{rem.Eus-dualizable-basis} \rm
The dualizable basis hypothesis does not appear in the statement of
Proposition 2.3 in \cite{BES}. However, it is implicitly used in the
proof of \cite{BES}[Lemma 2.4].
 From our work with
examples it seems that, without that extra hypothesis, the element
$\sum_{x\in e_iB}(-1)^{deg(x)}x\otimes x^*$ need not be in $Ker(R)$.

The dualizable hypothesis seems to be implicitly used also in the
argument of \cite{Eu}[Section 7.1], where the corresponding result
(with the automorphism $\tau$ conveniently modified) is proved. In
both cases, the crucial point is to guarantee that if $x\in B$ is a
homogeneous element of the basis $B$ of degree $>0$, then, for any
arrow $a\in Q_1$, the element $ax^*$  (resp. $x^*a$) should again be
of the form $y^*$, for some $y\in B$, whenever the product is
nonzero. This follows immediately in case one has $a(ya)^*=y^*$ and
$(ay)^*a=y$, for all $y\in B$ and $a\in Q_1$. This is precisely the
statement of Lemma 2.4 in \cite{BES} and is implicit in the argument
of \cite{Eu}[Section 7.1].

Essentially by the proof of our Lemma \ref{lema:basis for
graded-Schurian algebras}, we see that the mentioned crucial point
is tantamount to require that $B$ is a dualizable basis and that
$(-,-)$ is its associated Nakayama form. If, as in the spirit of
\cite{Eu}[Section 6.3],  one  has from the beginning a symmetric
Nakayama form $(-,-)$ such that $(e_i,e_i)=0$, for all $i\in Q_0$,
and finds a basis $B$ consisting of homogeneous elements which
contains the vertices and has the property that the dual elements
$\{w_i:=e_i^*:$ $i\in Q_0\}$ (in $B^*$) belong to $B\cap
Soc(\Lambda)$, then one readily sees that $B$ is dualizable and
$(-,-)$ is its associated Nakayama form.
\end{rem}

In the rest of the paper, the basis $B$ will be always that of
proposition \ref{prop.fixed basis to work}. The following properties
can be derived in a routinary way. We leave the verifications to the
reader.

\begin{cor}\label{Basis properties} Let $i, j\in Q_0$ be vertices. The following holds:

\begin{enumerate}

    \item The set of possible degrees of
    the elements in $e_iBe_j$ is

    $$\{j-i, j-i + 2, j -i + 4, \dots j-i + 2(n-j)= 2n - (i+j)\} \bigcup$$

    $$\{j+i -1, j+i +1, j+i+3, \dots j+i + 2(n-max(i,j))- 1\}$$

    \item If $\bar{a}_{i-1}\cdots \bar{a}_1\epsilon^{2k}a_1\cdots
    a_{j-1}$ is a nonzero element of $\Lambda$, then $k\leq
    n-i-j+1$.

    \item $a_1\cdots a_{j-1}\bar{a}_{j-1}\cdots \bar{a}_1= (-1)^{\frac{j(j-1)}{2}}\epsilon^{2(j-1)}$
    for $j=2, \dots, n$.

    \item $a_1\cdots a_{j-1}\bar{a}_{j-1}= (-1)^{j-1}\epsilon^2a_1\cdots a_{j-2}$

    \item
    $\bar{a}_ia_i...a_j=(-1)^{j-i+1}a_{i+1}...a_{j+1}\bar{a}_{j+1}$
    whenever $i\leq j< n$ (convening that $a_{n}=0$).

    \item $\text{dim}(Hom_{\Lambda^e}(P, \Lambda))= \sum_{i=1}^n \text{dim}(e_i\Lambda e_i)= \sum_{i=1}^n
    [2(n-i)+2]= n^2 + n$

    \item $\text{dim}(Hom_{\Lambda^e}(Q, \Lambda))= \text{dim}(e_1\Lambda e_1) + 2\sum_{i=1}^{n-1}(e_i\Lambda e_{i+1})
    = 2n + 2\sum_{i=1}^{n-1}[2(n-i-1) + 1]= 2n^2$

    \item The Cartan matrix of $\Lambda$ is given by:

        \vspace{0.5cm}

        $$\mathcal{C}_{P(\mathbb{L}_{n})}= \left( \begin{tabular}{c|cccc}

            2n & 2(n-1) & 2(n-2) & $\cdots$ & 2 \\ \hline

            2(n-1) & & & & \\

            $\vdots$ & & $\mathcal{C}_{P(\mathbb{L}_{n-1})}$ & & \\

            2 & & & &

       \end{tabular}\right)$$

       where

        $$\mathcal{C}_{P(\mathbb{L}_{2})}= \left( \begin{tabular}{c c}

            4 & 2\\

            2 & 2\\

        \end{tabular}\right)$$
        Its determinant is $det (\mathcal{C}_{P(\mathbb{L}_{n})})=2^n$ (see remark 3.3 in \cite{HZ}).

\end{enumerate}

\end{cor}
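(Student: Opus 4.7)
The proof proposal is to verify each of the eight items in sequence, with items (1)--(2) depending purely on the explicit basis exhibited in Proposition \ref{prop.fixed basis to work}, items (3)--(5) being direct manipulations of the defining relations, items (6)--(7) being elementary dimension counts, and item (8) being an induction combined with a row/column reduction.

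For (1), I would simply read off the degrees of the basis vectors in $e_iBe_j$ from parts (c) and (d) of Proposition \ref{prop.fixed basis to work}. The non-$\epsilon$ monomials $a_i\cdots a_{j+k-1}\bar{a}_{j+k-1}\cdots \bar{a}_j$ (for $i\leq j$, say) contribute the arithmetic progression $j-i,\, j-i+2,\ldots,\, 2n-(i+j)$ and the monomials factoring through $\epsilon$ contribute the progression $i+j-1,\, i+j+1,\ldots,\, i+j+2(n-\max(i,j))-1$; the case $i>j$ follows from the antiautomorphism $(-)^-$. For (2), I would observe that the basis in (c) already exhibits $\bar{a}_{i-1}\cdots \bar{a}_1\epsilon^{2k+1}a_1\cdots a_{j-1}$ as nonzero iff $0\le k\le n-\max(i,j)$; multiplying on the left by $\epsilon$, or using $\epsilon^2=-a_1\bar{a}_1$ together with the vertex relations to push $\epsilon$'s into non-$\epsilon$ letters, gives the bound $k\leq n-i-j+1$ for the even-power case.

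For (3)--(5), everything comes from the defining relations $\epsilon^2+a_1\bar{a}_1=0$ and $a_k\bar{a}_k+\bar{a}_{k-1}a_{k-1}=0$ at vertex $k$. For (5), the relation at vertex $i+1$ gives $\bar{a}_i a_i=-a_{i+1}\bar{a}_{i+1}$, and iterating this yields $\bar{a}_ia_i\cdots a_j=(-1)^{j-i+1}a_{i+1}\cdots a_{j+1}\bar{a}_{j+1}$ after $j-i+1$ applications. Identity (4) is the base case $j=i+1$ applied in reverse and read from vertex $1$: $a_1\cdots a_{j-1}\bar{a}_{j-1}=-a_1\cdots a_{j-2}\bar{a}_{j-2}a_{j-2}$, which after pushing the bar to the leftmost position becomes $(-1)^{j-1}\epsilon^2 a_1\cdots a_{j-2}$. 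For (3), I would induct on $j$; the inductive step consists in applying (4) to the innermost pair $a_{j-1}\bar{a}_{j-1}$ and then using (3) for $j-1$, with a careful bookkeeping of the sign $(-1)^{\frac{j(j-1)}{2}}$.

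For (6) and (7), the adjunction $\mathrm{Hom}_{\Lambda^e}(\Lambda e_i\otimes e_j\Lambda,\Lambda)\cong e_i\Lambda e_j$ together with item (1) (which gives $\dim(e_i\Lambda e_j)=2(n-\max(i,j)+1)$) reduces the two identities to summing two arithmetic progressions. For (8), the same dimension formula shows that the first row and first column of $\mathcal{C}_{P(\mathbb{L}_n)}$ are both $(2n,2(n-1),\ldots,2)$ and that the lower right $(n-1)\times (n-1)$ block coincides with $\mathcal{C}_{P(\mathbb{L}_{n-1})}$, which proves the block form. The determinant $\det(\mathcal{C}_{P(\mathbb{L}_n)})=2^n$ is then established by induction on $n$: subtracting the second row from the first (multiplied by an appropriate scalar to cancel the $(1,2)$-entry) produces a first row with only the $(1,1)$-entry nonzero, and that entry equals $2$; expanding along this row gives $2\cdot\det(\mathcal{C}_{P(\mathbb{L}_{n-1})})$, with base case $\det(\mathcal{C}_{P(\mathbb{L}_2)})=\det\bigl(\begin{smallmatrix}4&2\\2&2\end{smallmatrix}\bigr)=4$. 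The only non-purely mechanical step in the whole corollary is the sign tracking in item (3), so this is where I would be most careful; everything else is a direct translation from the basis and the relations.
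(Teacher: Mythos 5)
Your proposal is correct and coincides with the routine verification that the paper explicitly leaves to the reader: items (1)--(2) by reading degrees off the basis of Proposition \ref{prop.fixed basis to work}, (3)--(5) by iterating the mesh relations (your derivation of (3) from (4) with sign $(-1)^{j-1}(-1)^{(j-1)(j-2)/2}=(-1)^{j(j-1)/2}$ checks out), (6)--(8) by the dimension count $\dim(e_i\Lambda e_j)=2(n-\max(i,j)+1)$ and the row reduction giving $\det\mathcal{C}_{P(\mathbb{L}_n)}=2\det\mathcal{C}_{P(\mathbb{L}_{n-1})}$. Incidentally, your value $\dim(e_i\Lambda e_{i+1})=2(n-i)$ is the correct one --- the summand $2(n-i-1)+1$ printed in item (7) of the statement is a typo, though the total $2n^2$ is unaffected --- and in item (2) your second argument (pushing $\epsilon^2=-a_1\bar a_1$ through the relations, equivalently comparing the degree $i+j+2k-2$ with the parity classes of item (1)) is the one that actually works, rather than left multiplication by $\epsilon$.
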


\subsection{The minimal projective resolution of $\Lambda$}

We are now ready to give all the modules and maps of the minimal
projective resolution $\Lambda =P(\mathbb{L}_n)$ as a bimodule.

\begin{prop}\label{projetive resolution} Let $\Lambda
=P(\mathbb{L}_n)$ be the preprojective algebra of type
$\mathbb{L}_n$, let $B$ be the dualizable basis of proposition
\ref{prop.fixed basis to work} and let $\tau\in Aut(\Lambda )$ the
algebra automorphism that fixes the vertices and satisfies that
$\tau (a)=-a$, for all $a\in Q_1$. The chain complex $\dots
P^{-2}\stackrel{d^{-2}}{\longrightarrow}
P^{-1}\stackrel{d^{-1}}{\longrightarrow}P^0\stackrel{u}{\longrightarrow}\Lambda\longrightarrow
0$  identified by the following properties is a minimal projective
resolution of $\Lambda$ as a bimodule:

\begin{enumerate}[a)]

    \item $P^{-n}= Q:= \bigoplus_{a\in Q_1}\Lambda e_{i(a)}\otimes
    e_{t(a)}\Lambda$ if $n\equiv 1 \text{(mod 3)}$ and $P^{-n}=P:= \bigoplus_{i\in Q_0} \Lambda e_i\otimes
     e_i\Lambda$ otherwise.

     \item $u$ is the multiplication map, $d^m= (d^n)_{\tau}$
     whenever $m-n=\pm 3$ and the initial differentials $d^{-1}=:
     \delta$, $d^{-2}=: R$ and $d^{-3}=: k$ are the only homomorphisms of
     $\Lambda$-bimodules satisfying:

     \begin{enumerate}[i)]

        \item $\delta(e_{i(a)}\otimes e_{t(a)})= a\otimes e_{t(a)}- e_{i(a)}\otimes a$

        \item $R(e_i\otimes e_i)= \sum_{a\in Q_1  i(a)=i}e_{i(a)}\otimes \bar{a}+ a\otimes e_{i(a)}$

        \item $k(e_i\otimes e_i)= \sum_{x\in e_iB}(-1)^{deg(x)}x\otimes x^*$

     \end{enumerate}
    for all $a\in Q_1$ and $i\in Q_0$.

\end{enumerate}

\end{prop}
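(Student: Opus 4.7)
The plan is to build the resolution in two stages: first verify exactness of the initial four-term piece
\[ P \stackrel{k}{\longrightarrow} P \stackrel{R}{\longrightarrow} Q \stackrel{\delta}{\longrightarrow} P \stackrel{u}{\longrightarrow} \Lambda \longrightarrow 0, \]
and then use the selfequivalence $F_\tau$ to produce the rest by periodicity. Surjectivity of $u$ is clear; exactness at $P^0$ and at $Q$ is standard for quiver algebras, since $\ker u$ is generated by the commutators $a\otimes e_{t(a)}-e_{i(a)}\otimes a$ and $\ker\delta$ records exactly the mesh relations $\sum_{i(a)=i}a\bar a=0$ that define $P(\mathbb{L}_n)$; both checks proceed directly from the definitions of $\delta$ and $R$. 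The crucial step is exactness at $P^{-2}$, which is furnished by Lemma \ref{lema.correction of BES proposition}: the subbimodule $N$ of $P$ generated by the elements $\xi_i=\sum_{x\in e_iB}(-1)^{\deg x}x\otimes x^*$ is contained in $\ker R$ and isomorphic to ${}_1\Lambda_\tau$ via $\phi(e_i)=\xi_i$. A dimension count against the alternating sum $\dim P-\dim Q+\dim P$ (using Corollary \ref{Basis properties}) then forces $N=\ker R$, and since $k(e_i\otimes e_i)=\xi_i$ by definition, we obtain $\mathrm{Im}(k)=N=\ker R$.

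Having computed $\Omega^3_{\Lambda^e}(\Lambda)\cong{}_1\Lambda_\tau=F_\tau(\Lambda)$, I would then iterate by applying the selfequivalence $F_\tau:{}_\Lambda\mathrm{Proj}_\Lambda\longrightarrow{}_\Lambda\mathrm{Proj}_\Lambda$ termwise to the truncated complex $P\stackrel{k}{\leftarrow}P\stackrel{R}{\leftarrow}Q\stackrel{\delta}{\leftarrow}P$. By Lemma \ref{lema.equivalencia en bimodulos proyectivos}, $F_\tau$ is naturally isomorphic to $G_\tau$, which fixes each indecomposable projective bimodule $\Lambda e_i\otimes e_j\Lambda$ and replaces a morphism $f$ with $f_\tau$ acting on generators by $e_i\otimes e_j\mapsto\sum a_r\otimes\tau^{-1}(b_r)=\sum a_r\otimes\tau(b_r)$ (using $\tau^2=\mathrm{id}$). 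Splicing yields $P^{-4}=Q$, $P^{-5}=P$, $P^{-6}=P$ with differentials $\delta_\tau, R_\tau, k_\tau$, and since $F_\tau\circ F_\tau$ is the identity on $_\Lambda\mathrm{Proj}_\Lambda$ the original complex reappears at step six. This produces exactly the claimed pattern $P^{-n}=Q$ for $n\equiv 1\pmod 3$, $P^{-n}=P$ otherwise, and $d^m=(d^n)_\tau$ whenever $m-n=\pm 3$.

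Finally, minimality follows by inspection: on the generators $e_i\otimes e_i$ (or $e_{i(a)}\otimes e_{t(a)}$), the maps $\delta$ and $R$ produce only summands with a nontrivial arrow factor, while each summand $x\otimes x^*$ of $\xi_i$ satisfies $\deg x+\deg x^*=\deg w_i>0$, so the term $e_i\otimes e_i$ never appears in the image of $k$; the differentials therefore have no ``identity'' component and the resolution is minimal. The main obstacle is the identification $\ker R=N\cong{}_1\Lambda_\tau$ in Step~1: showing that the $\xi_i$ annihilate $R$ requires delicate sign bookkeeping over $B$ that works precisely because $B$ is dualizable (cf.\ Remark \ref{rem.Eus-dualizable-basis}), and we import this conclusion from \cite{BES} via Lemma \ref{lema.correction of BES proposition}.
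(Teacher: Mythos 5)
Your proposal is correct and follows essentially the same route as the paper: both rest on Lemma \ref{lema.correction of BES proposition} to identify $\Omega^3_{\Lambda^e}(\Lambda)$ with ${}_1\Lambda_\tau$ via the elements $\xi_i$, and both extend the resolution periodically by applying $F_\tau\cong G_\tau$ and splicing, so that the twisted differentials $\delta_\tau$, $R_\tau$, $k_\tau$ appear and the pattern repeats with period six since $\tau^2=\mathrm{id}$. The only (harmless) differences are that the paper imports the full five-term exact sequence $0\to{}_1\Lambda_\tau\to P\to Q\to P\to\Lambda\to 0$ directly from \cite{BES}, where you partially re-derive it with a dimension count, and that you make the minimality check explicit where the paper leaves it implicit.
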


\begin{proof} By Lemma \ref{lema.correction of BES proposition} (see \cite{BES}, Prop. 2.3) we have an exact sequence
of $\Lambda$-bimodules:

$$0\longrightarrow _1\Lambda_{\tau}\stackrel{j}{\longrightarrow} P\stackrel{R}{\longrightarrow}
Q\stackrel{\delta}{\longrightarrow}P\stackrel{u}{\longrightarrow}\Lambda
\longrightarrow 0,$$ where the map $j$ satisfies that $j(e_i)=
\sum_{x\in e_iB}(-1)^{deg(x)}x\otimes x^*$ for each $i\in Q_0$.

Applying the self-equivalence $F_{\tau}:
_{\Lambda}\text{Mod}_{\Lambda}\longrightarrow
_{\Lambda}\text{Mod}_{\Lambda}$, which acts as the identity on
morphisms, and bearing in mind that $\tau^2= 1_{\Lambda}$, we get an
exact sequence

$$0\longrightarrow \Lambda \stackrel{j}{\longrightarrow} _1P_{\tau}\stackrel{R}{\longrightarrow}
_1Q_{\tau}\stackrel{\delta}{\longrightarrow}_1P_{\delta}\stackrel{u}{\longrightarrow}
_1\Lambda_{\tau}\longrightarrow 0$$

By Lemma \ref{lema.equivalencia en bimodulos proyectivos},  we then
get an exact sequence of $\Lambda$-bimodules

$$0\longrightarrow \Lambda\stackrel{\tilde{j}}{\longrightarrow}P \stackrel{R_{\tau}}{\longrightarrow}
Q \stackrel{\delta_{\tau}}{\longrightarrow}P
\stackrel{\tilde{u}}{\longrightarrow}
_1\Lambda_{\tau}\longrightarrow 0,$$ where, if $\psi
:G_\tau\stackrel{\cong}{\longrightarrow}F_\tau$ denotes the natural
isomorphism of lemma \ref{lema.equivalencia en bimodulos
proyectivos}, then  $\tilde{u}= u\circ\psi_P: a\otimes b
\rightsquigarrow a\tau(b)$ and $\tilde{j}= \psi_P^{-1}\circ j$ which
takes $e_i \rightsquigarrow -\sum_{x\in e_iB}x\otimes x^*$.

The composition $P \stackrel{\tilde{u}}{\longrightarrow}
_1\Lambda_{\tau}\stackrel{j}{\longrightarrow} P$ takes $e_i\otimes
e_i \rightsquigarrow \sum_{x\in e_iB}(-1)^{deg(x)}x\otimes x^*$ and,
hence, coincides with the morphims $k$ given in the statement.
Finally, the composition $P\stackrel{u}{\longrightarrow}\Lambda
\stackrel{\tilde{j}}{\longrightarrow}P$ takes $e_i\otimes
e_i\rightsquigarrow -\sum_{x\in e_iB}x\otimes x^*= \sum_{x\in
e_iB}(-1)^{deg(x)}x\otimes \tau(x^*)= k_{\tau}(e_i\otimes e_i)$.
Therefore $\tilde{j}\circ u= k_{\tau}$. The rest of the proof is
clear.
\end{proof}

\subsection{A cochain complex which gives the Hochschild cohomology}

 Recall that if $f: \oplus_{s=1}^{m}\Lambda e_{i_s}\otimes
e_{j_s}\Lambda \longrightarrow \oplus_{t=1}^p \Lambda e_{k_t}\otimes
e_{l_t}\Lambda$ $(i_s, j_s, k_t, l_t \in Q_0)$ is a morphism of
$\Lambda$-bimodules, an application of the contravariant functor
$Hom_{\Lambda^e}(_,\Lambda):
_{\Lambda}\text{Mod}_{\Lambda}\longrightarrow _k\text{Mod}$ gives a
$K$-linear map

$$f^*: Hom_{\Lambda^e}(\oplus_{t=1}^p \Lambda e_{k_t}\otimes e_{l_t},
\Lambda)\longrightarrow Hom_{\Lambda^e}(\oplus_{s=1}^m\Lambda
e_{i_s}\otimes e_{j_s}\Lambda, \Lambda).$$ Due to the isomorphism of
$K$-vector spaces $Hom_{\Lambda^e}(\Lambda e_i\otimes e_j \Lambda,
\Lambda)\cong e_i\Lambda e_j$, for all $i,j \in Q_0$, we get an
induced map, still denoted the same $f^*: \oplus_{t=1}^p
e_{k_t}\Lambda e_{l_t}\longrightarrow \oplus_{s=1}^{m}e_{i_s}\Lambda
e_{j_s}$. As usual we will also denote by $J=J(\Lambda)$ the
Jacobson radical of $\Lambda$. With this terminology, we get:

\begin{prop} \label{cohomology complex} For each $n\geq 0$, $HH^n(\Lambda)$ is the $n$-th
cohomology space of the complex

$$V^{\bullet}: \cdots 0 \longrightarrow \oplus_{i\in Q_0}e_i\Lambda e_i\stackrel{\delta^*}{\longrightarrow}
\oplus_{a\in Q_1}e_{i(a)}\Lambda
e_{t(a)}\stackrel{R^*}{\longrightarrow}\oplus_{i\in Q_0}e_i\Lambda
e_i \stackrel{k^*}{\longrightarrow}\oplus_{i\in Q_0}e_i\Lambda e_i$$

$$\stackrel{\delta_{\tau}^*}{\longrightarrow} \oplus_{a\in
Q_1}e_{i(a)}\Lambda e_{t(a)}
\stackrel{R_{\tau}^*}{\longrightarrow}\oplus_{i\in Q_0}e_i\Lambda
e_i \stackrel{k_{\tau}^*}{\longrightarrow}\oplus_{i\in
Q_0}e_i\Lambda e_i \stackrel{\delta^*}{\longrightarrow}\oplus_{a\in
Q_1}e_{i(a)}\Lambda e_{t(a)}\cdots$$

\noindent where $V^0= \sum_{i\in Q_0}e_i\Lambda e_i$ and $V^n=0$
$\forall n<0$. Moreover, viewing $\oplus_{i\in Q_0}e_i\Lambda e_i$
and $\oplus_{a\in Q_1} e_{i(a)}\Lambda e_{t(a)}$ as subspaces of
$\Lambda$, the differentials of $V^{\bullet}$ act as follows for
each oriented cycle $c$ at $i$ and each path $p: i(a)\rightarrow
\cdots \rightarrow t(a)$ :

\begin{enumerate}[a)]

    \item $\delta^*(c)= a_{i-1}c - c\bar{a}_{i-1} + \bar{a}_ic - ca_i$

    \item $R^*(p)= p\bar{a}+ \bar{a}p$

    \item $k^*(c)= 0$  (i.e. $k^*$ is the zero map)

    \item $\delta_{\tau}^*(c)= a_{i-1}c + c\bar{a}_{i-1} + \bar{a}_ic + ca_i$

    \item $R_{\tau}^*(p)= p\bar{a} - \bar{a}p$

    \item $k_{\tau}^*(c)=0$ if $c\in e_iJe_i$, and $k_{\tau}^*(e_i)=- \sum_{j\in Q_0}\text{dim}(e_i\Lambda e_j)\omega_j$

\end{enumerate}

where we convene that $a_0= \bar{a}_0= \epsilon$ and
$a_n=\bar{a}_n=0$

\end{prop}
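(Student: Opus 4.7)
The plan is to apply the contravariant functor $Hom_{\Lambda^e}(-,\Lambda)$ to the minimal projective resolution of Proposition \ref{projetive resolution} and to identify the resulting cochain complex with $V^\bullet$. Since $HH^n(\Lambda)=Ext^n_{\Lambda^e}(\Lambda,\Lambda)$ is the $n$-th cohomology of the Hom complex, and since the natural $K$-linear isomorphism $Hom_{\Lambda^e}(\Lambda e_i\otimes e_j\Lambda,\Lambda)\stackrel{\cong}{\longrightarrow}e_i\Lambda e_j$, $f\rightsquigarrow f(e_i\otimes e_j)$, identifies the Hom spaces with the stated direct sums, the modules $V^n$ of the complex immediately take the desired form.

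For the first two differentials I would carry out direct evaluation on the bimodule generators. Given $c\in e_i\Lambda e_i$, regarded as the $i$-th summand of $V^0$, the corresponding map $f:P^0\to\Lambda$ satisfies $\delta^*(f)(e_{i(a)}\otimes e_{t(a)})=f(a\otimes e_{t(a)})-f(e_{i(a)}\otimes a)=a\cdot f(e_{t(a)}\otimes e_{t(a)})-f(e_{i(a)}\otimes e_{i(a)})\cdot a$. Specializing to the arrows $a\in\{a_{i-1},\bar{a}_{i-1},a_i,\bar{a}_i\}$ incident to vertex $i$ and collecting the nonzero contributions yields formula (a) (with the conventions $a_0=\bar{a}_0=\epsilon$ and $a_n=\bar{a}_n=0$). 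Formula (b) is obtained analogously by tracking in which summand $\Lambda e_{i(b)}\otimes e_{t(b)}\Lambda$ of $Q$ each generator $e_i\otimes\bar{a}$ or $a\otimes e_i$ of $R(e_i\otimes e_i)$ lies and evaluating. Formulae (d) and (e) then follow at once because the only effect of $\tau$ on arrows is a sign, whence $\tau^{-1}(x)=(-1)^{\deg x}x$ on homogeneous elements, and the derivations of $\delta_\tau^*$ and $R_\tau^*$ are identical but with signs changed accordingly.

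The substantive steps are (c) and (f). For $k^*$, the bimodule structure gives, for $g=(c_j)_j\in V^2$, $k^*(g)_j=g(k(e_j\otimes e_j))=\sum_{x\in e_jB}(-1)^{\deg x}\,x\,c_{t(x)}\,x^*$. Taking $c$ concentrated at vertex $i$, the sum collapses to indices $x\in e_jBe_i$. If $c\in e_iJe_i$, then $\deg(xcx^*)>\deg\omega_j=2n-1$ and $xcx^*=0$ because $\omega_j$ spans the one-dimensional component $e_j\,Soc(\Lambda)\,e_j$. If $c=e_i$, the dualizable-basis identity $xx^*=\omega_{i(x)}=\omega_j$ from Lemma \ref{lema:basis for graded-Schurian algebras} reduces $k^*(e_i)_j$ to $\omega_j\cdot\sum_{x\in e_jBe_i}(-1)^{\deg x}$. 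By Corollary \ref{Basis properties}(1) the set of degrees of $e_jBe_i$ is the disjoint union of two arithmetic progressions of step $2$ and opposite parity, each of the same cardinality $n-\max(i,j)+1$, so the alternating sum vanishes and $k^*\equiv 0$.

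The computation of $k_\tau^*$ is parallel, but the sign twist is now essential. Since $\tau^{-1}(x^*)=(-1)^{\deg x^*}x^*$ and $\deg x+\deg x^*=\deg\omega_i=2n-1$, I would obtain $k_\tau(e_i\otimes e_i)=\sum_{x\in e_iB}(-1)^{\deg x+\deg x^*}\,x\otimes x^*=-\sum_{x\in e_iB}x\otimes x^*$, so the alternating signs collapse. Hence $k_\tau^*(c)_j=-\sum_{x\in e_jBe_i}\,x\,c_{t(x)}\,x^*$, which again vanishes on $e_iJe_i$ for the same degree reason, and at $c=e_i$ yields $k_\tau^*(e_i)_j=-\dim(e_jBe_i)\,\omega_j=-\dim(e_i\Lambda e_j)\,\omega_j$ after using $\dim e_j\Lambda e_i=\dim e_i\Lambda e_j$, a symmetry of the Cartan matrix. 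The main obstacle in the whole proof is the vanishing $k^*=0$, which hinges on the precise parity balance of basis elements in each $e_jBe_i$ afforded by the dualizable basis of Proposition \ref{prop.fixed basis to work}.
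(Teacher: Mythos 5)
Your proposal is correct and follows essentially the same route as the paper: apply $Hom_{\Lambda^e}(-,\Lambda)$ to the minimal projective resolution, identify $Hom_{\Lambda^e}(\Lambda e_i\otimes e_j\Lambda,\Lambda)$ with $e_i\Lambda e_j$, and evaluate the transposed differentials on generators, with $k^*=0$ following from the degree bound $\deg(\omega_j)=2n-1$ together with the even/odd parity balance of $e_jBe_i$, and $k_\tau^*$ computed from the sign collapse $(-1)^{\deg x+\deg x^*}=-1$. The paper's proof leaves most of these verifications to the reader and only works out samples (a), (c), (f); your write-up supplies the same computations in slightly more detail.
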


\begin{proof} $HH^n(\Lambda)$ is the $n$-th cohomology space of the
complex obtained by applying $Hom_{\Lambda^e}(_, \Lambda)$ to the
minimal projective resolution of $\Lambda$ as bimodule. The
$K$-vector spaces of that complexes are precisely those of
$V^{\bullet}$ and the only nontrivial part is the explicit
definition of its differentials.

We have two canonical isomorphisms of $k$-vector spaces:

$$\oplus_{j\in Q_0}e_j\Lambda e_j\stackrel{\sim}{\longrightarrow}
Hom_{\Lambda^e}(\oplus_{j\in Q_0}\Lambda e_j\otimes e_j\Lambda,
\Lambda)$$

$$\oplus_{a\in Q_1}e_{i(a)}\Lambda e_{t(a)}\stackrel{\sim}{\longrightarrow}
Hom_{\Lambda^e}(\oplus_{a\in Q_1}\Lambda e_{i(a)}\otimes
e_{t(a)}\Lambda, \Lambda)$$The first one matches a nonzero oriented
cycle $c$ at $i$ with the morphism of $\Lambda$-bimodules
$\oplus_{j\in Q_0}\Lambda e_j\otimes
e_j\Lambda\stackrel{\tilde{c}}{\longrightarrow} \Lambda$ taking
$e_j\otimes e_j\rightsquigarrow \delta_{ij}c$, where $\delta$ is the
Kronecker symbol. Similarly a nonzero path $p: i(a)\rightarrow
\cdots \rightarrow t(a)$ is matched by the second isomorphism with
the morphism of $\Lambda$-bimodules $\oplus_{b\in Q_1}\Lambda
e_{i(b)}\otimes e_{t(b)}\Lambda\longrightarrow \Lambda$ taking
$e_{i(b)}\otimes e_{t(b)}\rightsquigarrow\delta_{ab}p$. With these
matches in mind the task of checking that the explicit definition of
the differentials is the giving one is routinary and mainly left to
the reader. We just do some samples:

    a) $\delta^*(c)$ is the element of $\oplus_{b\in Q_1} e_{i(b)}\Lambda e_{t(b)}$
    matched with $\tilde{c}\circ \delta\in Hom_{\Lambda^e}(\oplus_{b\in Q_1}
    \Lambda e_{i(b)}\otimes e_{t(b)}\Lambda, \Lambda)$. Then

    $$\delta^*(c)= \sum_{b\in Q_1}(\tilde{c}\circ \delta)(e_{i(b)}\otimes e_{t(b)})=
    \sum_{b\in Q_1}\tilde{c}(b\otimes e_{t(b)}-e_{i(b)}\otimes b)=$$

    $$\sum_{b\in Q_1}[b\tilde{c}(e_{t(b)}\otimes e_{t(b)})-\tilde{c}(e_{t(b)}\otimes e_{i(b)})b]=
    \sum_{b \in Q_1, t(b)=i}bc \hspace{0.5cm- \hspace{0.4cm}}\sum_{b\in Q_1, i(b)=i}cb=$$

    $$a_{i-1}c + \bar{a}_ic - ca_i - c\bar{a}_{i-1}$$

    c) $k^*(c)$ is the element of $\oplus_{j\in Q_0}e_j\Lambda
    e_j$ matched with $\tilde{c}\circ k\in Hom_{\Lambda^e}(\oplus_{j\in Q_0}\Lambda e_j\otimes e_j\Lambda,
    \Lambda)$. Then

    $$k^*(c)= \sum_{j\in Q_0}(\tilde{c}\circ k)(e_j\otimes e_j)= \sum_{j\in Q_0}\tilde{c}
    (\sum_{x\in e_jB}(-1)^{deg(x)}x\otimes x^*)= \sum_{j\in Q_0}\sum_{x\in e_jBe_i}(-1)^{deg(x)}xcx^*$$ But $xcx^*=0$ in case $deg(c)>0$ because $xx^*=\omega_j$ is an
    element in the socle. In case $c=e_j$ we have $k^*(e_j)= \sum_{j\in Q_0}\sum_{x\in
    e_jBe_i}(-1)^{deg(x)}xx^*$. Bearing in mind that $xx^*=
    \omega_j$ for each $x\in e_jBe_i$ and that the number of
    elements in $e_jBe_i$ with even degree is the same as the number
    of those with odd degree, we conclude that also $k^*(e_i)=0$.
    Since $k^*$ vanishes on all nonzero oriented cycles it follows
    that $k^*=0$.

    f) Arguing similarly with $k_{\tau}^*$ we get that

    $k_{\tau}^*(c)=0$ if $deg(c)>0$ and

    $k_{\tau}^*(e_i)= -\sum_{j\in Q_0}\sum_{x\in e_jBe_i}xx^*=
    -\sum_{j\in Q_0}\text{dim}(e_j\Lambda e_i)\omega_j$

\end{proof}

\begin{rem} \label{rem.Eus-complex-V} \rm
With the adequate change of presentation of the algebra, the complex
$V^\bullet$ should correspond to the sequence of morphisms  in
\cite{Eu}[Section 7.4], although the there defined differentials
seem not to make it into a complex.
\end{rem}

\begin{cor} $\Lambda$ is a symmetric periodic algebra of period $6$
and $\mathcal{P}(\Lambda, \Lambda)= Soc(\Lambda)$ when we view the
isomorphism $HH^0(\Lambda)\cong Z(\Lambda)$ as an identification.

\end{cor}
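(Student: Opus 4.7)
The corollary packages three assertions---$\Lambda$ is symmetric, periodic of period $6$, and $\mathcal{P}(\Lambda,\Lambda)=Soc(\Lambda)$---and I will establish them in turn. The symmetric condition is immediate from Proposition~\ref{prop.fixed basis to work}: since the basis $B$ constructed there is dualizable, Definition~\ref{defi:associated Nak form and dualizable} tells us that its associated Nakayama form is symmetric, which forces the Nakayama automorphism of $\Lambda$ to be inner.

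For the periodicity, my starting point is the minimal projective resolution of Proposition~\ref{projetive resolution}, which has a three-step module pattern ($P^{-n}=Q$ if $n\equiv 1\pmod{3}$ and $P^{-n}=P$ otherwise) and differentials satisfying $d^{m}=(d^{n})_{\tau}$ whenever $|m-n|=3$. Because $\tau^{2}=\mathrm{id}_{\Lambda}$, the resolution is truly periodic of period $6$; equivalently, one reads off $\Omega^{3}_{\Lambda^{e}}(\Lambda)\cong{}_{1}\Lambda_{\tau}$ and hence $\Omega^{6}_{\Lambda^{e}}(\Lambda)\cong{}_{1}\Lambda_{\tau^{2}}=\Lambda$, so the period of $\Lambda$ divides $6$. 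To confirm that the period is exactly $6$ I would rule out the proper divisors $1$, $2$, $3$ using the key fact that $\tau$ is not inner: since $\tau$ fixes every vertex, an inner realization would be conjugation by some $u\in\bigoplus_{i}e_{i}\Lambda e_{i}$, but its component $u_{1}=e_{1}ue_{1}$ lies in the commutative subalgebra $e_{1}\Lambda e_{1}\cong K[\epsilon]/(\epsilon^{2n})$, which cannot produce $u_{1}\epsilon u_{1}^{-1}=-\epsilon$ in characteristic $\ne 2$. Hence ${}_{1}\Lambda_{\tau}\not\cong\Lambda$, which immediately excludes periods $1$ and $3$ (both would force $\Omega^{3}(\Lambda)\cong\Lambda$); period $2$ would force $\Omega(\Lambda)\cong{}_{1}\Lambda_{\tau}$, which fails by comparing $\dim\Omega(\Lambda)=\dim P^{0}-\dim\Lambda$ with $\dim\Lambda$ for $n\ge 2$. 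Alternatively, \cite{BES} already establishes the exact period.

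For $\mathcal{P}(\Lambda,\Lambda)=Soc(\Lambda)$, I will combine the cochain complex $V^{\bullet}$ of Proposition~\ref{cohomology complex} with the periodicity just established in order to form a complete complex computing $\underline{HH}^{*}(\Lambda)$. In that extension the differential $d^{-1}\colon V^{-1}\to V^{0}$ coincides with $k_{\tau}^{*}\colon V^{5}\to V^{0}$, since $-1\equiv 5\pmod{6}$. Under the identification $HH^{0}(\Lambda)=Z(\Lambda)=\ker(\delta^{*})\subseteq V^{0}$ one therefore has $\mathcal{P}(\Lambda,\Lambda)=\ker(\lambda_{\Lambda,\Lambda})=\operatorname{im}(k_{\tau}^{*})$. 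The explicit description in Proposition~\ref{cohomology complex} shows that $k_{\tau}^{*}$ vanishes on $\bigoplus_{i}e_{i}Je_{i}$ and sends $e_{i}\mapsto-\sum_{j\in Q_{0}}\dim(e_{i}\Lambda e_{j})\,\omega_{j}$, so its matrix relative to $\{e_{i}\}$ and $\{\omega_{j}\}$ is $-\mathcal{C}_{P(\mathbb{L}_{n})}$; by Corollary~\ref{Basis properties}(8) this has determinant $(-1)^{n}2^{n}\ne 0$ in characteristic $\ne 2$, so $\operatorname{im}(k_{\tau}^{*})=\bigoplus_{j}K\omega_{j}=Soc(\Lambda)$. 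Combined with the automatic inclusion $Soc(\Lambda)\subseteq Z(\Lambda)$ (each $\omega_{j}$ is annihilated by $J$ on both sides), this yields $\mathcal{P}(\Lambda,\Lambda)=Soc(\Lambda)$. The main obstacle will be assembling the periodic extension of $V^{\bullet}$ and pinning down $\mathcal{P}(\Lambda,\Lambda)=\operatorname{im}(k_{\tau}^{*})$; once this identification is in hand, both the periodicity check and the final computation reduce to elementary linear algebra on the Cartan matrix.
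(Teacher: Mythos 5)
Your proof follows essentially the same route as the paper's: symmetry from the dualizable basis of Proposition \ref{prop.fixed basis to work}, periodicity from the resolution of Proposition \ref{projetive resolution} together with $\tau^2=\mathrm{id}_\Lambda$, and $\mathcal{P}(\Lambda,\Lambda)=\operatorname{im}(k_\tau^*)=Soc(\Lambda)$ by reading off $\underline{HH}^0(\Lambda)$ from the complete resolution and using the invertibility of the Cartan matrix. The only difference is that you explicitly rule out the proper divisors of $6$ as possible periods (via $\tau$ not being inner and a dimension count), a point the paper leaves implicit in Proposition \ref{projetive resolution} and \cite{BES}; this is a correct and welcome extra check.
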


\begin{proof} By \ref{prop.fixed basis to work}, we know that
$\Lambda$ is symmetric, and by \ref{projetive resolution} $\Lambda$
is periodic of period 6.

To see that the isomorphism $HH^0(\Lambda)\cong Z(\Lambda)$
identifies $\mathcal{P}(\Lambda, \Lambda)$ with $Soc(\Lambda)=
Soc(Z(\Lambda))$, note that from \ref{projetive resolution} it
follows that a minimal complete resolution of $\Lambda$ is given by

$$\cdots P^{-2}\longrightarrow P^{-1}\stackrel{d^{-1}}{\longrightarrow}P^0
\stackrel{d^0}{\longrightarrow}P^1\stackrel{d^1}{\longrightarrow}P^2
\longrightarrow \cdots$$ where $P^n= Q= \oplus_{a\in Q_1} \Lambda
e_{i(a)}\otimes e_{t(a)}\Lambda$, when $n\equiv -1 (mod 3)$, and
$P^n=\oplus_{i\in Q_0}\Lambda e_i \otimes e_i \Lambda$ otherwise,
and the arrows are given by $d^m=(d^n)_{\tau}$ whenever $m\equiv n
(mod 3)$ and $d^{-1}= \delta$, $d^{-2}=R$ and $d^{-3}=k$. It follows
that $\underline{HH}^*(\Lambda)$ is the cohomology of the complex

$$\cdots V^{-2}\stackrel{R_{\tau}^*}{\longrightarrow} V^{-1}\stackrel{k_{\tau}^*}{\longrightarrow}
V^0 \stackrel{\delta^*}{\longrightarrow}V^1
\stackrel{R^*}{\longrightarrow} V^2\longrightarrow \cdots$$ In
particular, we have $\underline{HH}^0(\lambda)=
\frac{Ker(\delta^*)}{Im(k_{\tau}^*)}$. But $Ker(\delta^*)=
HH^0(\Lambda)= Z(\Lambda)$ while $Im(k_{\tau}^*)= Soc(\Lambda)$
since the Cartan matrix of $\Lambda$ is invertible. Note that the
isomorphism $Z(\Lambda)\cong End_{\Lambda^e}(\Lambda)$ identifies
$Im(k_{\tau}^*)$ with $\mathcal{P}(\Lambda, \Lambda)$.

\end{proof}

\begin{cor} \label{structure of homology as graded module}
There are isomorphisms of graded
$\underline{HH}^*(\Lambda)$-modules:

$$\underline{HH}^*(\Lambda)\cong \underline{HH}^*(\Lambda)[6]$$

$$\underline{HH}_{-*}(\Lambda)\cong D(\underline{HH}^*(\Lambda))
\cong \underline{HH}^*(\Lambda)[5]$$ and isomorphisms of graded
$HH^*(\Lambda)$-modules $HH_{-*}(\Lambda)\cong D(HH^*(\Lambda))$.
Moreover $\underline{HH}^{*}(\Lambda)$ is a graded Frobenius
algebra.

\end{cor}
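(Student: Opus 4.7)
The plan is to derive each clause as a direct specialization of the machinery developed in the preliminaries, with the single substantive input being the immediately preceding corollary, which establishes that $\Lambda=P(\mathbb{L}_n)$ is a symmetric periodic algebra of period $6$. By the observation immediately preceding the statement of Proposition \ref{localization}, for a symmetric algebra, periodicity of period $s$ is equivalent to being $(s-1)$-Calabi-Yau Frobenius, so $\Lambda$ is $5$-Calabi-Yau Frobenius; consequently both Proposition \ref{localization} (with $s=6$) and Theorem \ref{Eu-Schedler} (with $m=5$) are available.

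First I would note that the periodicity shift $\underline{HH}^*(\Lambda)\cong\underline{HH}^*(\Lambda)[6]$ is part (1) of Proposition \ref{localization} applied to $M=\Lambda$; concretely, multiplication by the class of an isomorphism $\Omega_{\Lambda^e}^6(\Lambda)\stackrel{\sim}{\longrightarrow}\Lambda$ implements the shift. Taking $M=\Lambda$ in Theorem \ref{Eu-Schedler}(1) with $m=5$ then gives $\underline{HH}_{-*}(\Lambda)[-5]\cong\underline{HH}^*(\Lambda)$, i.e.\ $\underline{HH}_{-*}(\Lambda)\cong\underline{HH}^*(\Lambda)[5]$ as graded $\underline{HH}^*(\Lambda)$-modules.

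For the remaining duality statements, the crucial input is that $\Lambda$ is symmetric, equivalently $D(\Lambda)\cong\Lambda$ as $\Lambda$-bimodules. Substituting $M=\Lambda$ in Remark \ref{Stable duality}(1) and using this identification yields $\underline{HH}_{-*}(\Lambda)\cong D(\underline{HH}^*(\Lambda,D(\Lambda)))\cong D(\underline{HH}^*(\Lambda))$; combined with the previous paragraph this produces the middle isomorphism $D(\underline{HH}^*(\Lambda))\cong\underline{HH}^*(\Lambda)[5]$. The analogous argument using part (2) of Remark \ref{Stable duality} delivers $HH_{-*}(\Lambda)\cong D(HH^*(\Lambda))$ as graded $HH^*(\Lambda)$-modules. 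The final assertion that $\underline{HH}^*(\Lambda)$ is a graded Frobenius algebra is already part of the conclusion of Theorem \ref{Eu-Schedler} and requires no further work.

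No step is a real obstacle, since each is a direct appeal to an established result. The only bookkeeping to watch is that Theorem \ref{Eu-Schedler}(3) separately predicts $\underline{HH}^*(\Lambda)\cong D(\underline{HH}^*(\Lambda))[-2m-1]=D(\underline{HH}^*(\Lambda))[-11]$, which must be consistent with the shift by $[5]$ obtained above; this is automatic because $[-11]=[-5]+[-6]$ in shifts and, by the first step, the degree-$6$ shift is the identity on $\underline{HH}^*(\Lambda)$-modules.
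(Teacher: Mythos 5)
Your proposal is correct and follows essentially the same route as the paper: periodicity of period $6$ gives the first shift, Theorem \ref{Eu-Schedler} with $m=5$ gives the $[5]$-shift and the graded Frobenius condition, and Remark \ref{Stable duality} together with $D(\Lambda)\cong\Lambda$ gives the duality statements. The only cosmetic difference is that the paper obtains $D(\underline{HH}^*(\Lambda))\cong\underline{HH}^*(\Lambda)[11]$ directly from Theorem \ref{Eu-Schedler}(3) and then reduces $[11]$ to $[5]$ via the $6$-periodicity, whereas you reach the same middle isomorphism by combining Remark \ref{Stable duality}(1) with Theorem \ref{Eu-Schedler}(1); both derivations are equally valid.
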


\begin{proof} $\underline{HH}^*(\Lambda)\cong
\underline{HH}^*(\Lambda)[6]$ since $\Lambda$ is periodic of period
6. On the other hand, $\Lambda$ is $5$-CY Frobenius and, by
\ref{Eu-Schedler}, we have

$$D(\underline{HH}^*(\Lambda))\cong \underline{HH}^*(\Lambda)[11]$$

$$\underline{HH}_{-*}(\Lambda)\cong \underline{HH}^*(\Lambda)[5]$$
Then the isomorphisms in the statement follow. The graded Frobenius
condition of $\underline{HH}^*(\Lambda)$ follows from Theorem
\ref{Eu-Schedler}.

On the other hand, we have an isomorphism $HH_{-*}(\Lambda)\cong
D(HH^*(\Lambda, D(\Lambda)))\cong D(HH^*(\Lambda))$ due to remark
\ref{Stable duality} and the fact that $D(\Lambda)\cong \Lambda$.

\end{proof}

\section{The Hochschild cohomology spaces }

In  the rest of the paper we assume that $Char(K)\neq 2$.

In this section we will use the complex $V^\bullet$ of proposition
\ref{cohomology complex} to calculate the dimension and an
appropriate basis of each space $HH^i(\Lambda )$. In the proof of
the following lemma and in the rest of the paper, the matrix of a
lineaar map is always written by columns.

\begin{lema} \label{Image of RStar and RtauStar}
The equality $Im(R^*)=\oplus_{i\in Q_0}e_iJe_i$ holds and
$Im(R_\tau^*)$ is a subspace of codimension $n$ in $\oplus_{i\in
Q_0}e_iJe_i$. In particular, we have:

$$\text{dim}(Im(R^*))= n^2$$

$$\text{dim}(Im (R_{\tau}^*))= n^2-n.$$

\end{lema}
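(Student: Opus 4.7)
The plan is to exploit the canonical antiautomorphism $(-)^-$ of $\Lambda$. It induces an involution $\sigma$ on $V^1=\oplus_{a\in Q_1}e_{i(a)}\Lambda e_{t(a)}$ defined by $\sigma(v)_a=\overline{v_{\bar{a}}}$; this fixes the $\epsilon$-component pointwise and exchanges the $a_i$- and $\bar{a}_i$-components via $x\mapsto\bar{x}$. There is also the bar involution on $V^2=\oplus_i e_i\Lambda e_i$, and inspection of the basis of Proposition \ref{prop.fixed basis to work} reveals that every element of $\bigcup_i e_iBe_i$ is bar-palindromic. Hence the bar action on $V^2$ is the identity, so in the $(\pm 1)$-eigenspace decomposition one has $V^{2,+}=V^2$ and $V^{2,-}=0$. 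Counting dimensions gives $\dim V^{1,+}=n^2+n$ and $\dim V^{1,-}=n^2-n$.

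Writing $L(v)=\sum_a v_a\bar{a}$ and $M(v)=\sum_a\bar{a}v_a$, so that $R^*=L+M$ and $R_\tau^*=L-M$, a direct computation gives the identity $\overline{L(v)}=M(\sigma v)$. It follows that $R^*$ commutes and $R_\tau^*$ anticommutes with the involutions, and the vanishing of $V^{2,-}$ yields $V^{1,-}\subseteq\ker R^*$ and $V^{1,+}\subseteq\ker R_\tau^*$. Moreover, on $V^{1,+}$ one has $L=M$ and hence $R^*=2L$, while on $V^{1,-}$ one has $L=-M$ and hence $R_\tau^*=2L$. Since $Char(K)\neq 2$, this gives $Im(R^*)=L(V^{1,+})$ and $Im(R_\tau^*)=L(V^{1,-})$, both already contained in $\oplus_i e_iJe_i$ by the degree argument.

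It remains to show that $L|_{V^{1,+}}$ surjects onto $\oplus_i e_iJe_i$ and that $L|_{V^{1,-}}$ is injective. For the surjectivity, given a target $(y_i)_i\in\oplus_i e_iJe_i$, I solve the system top-down: pick $v_{a_{n-1}}$ with $\bar{a}_{n-1}v_{a_{n-1}}=y_n$; recursively choose $v_{a_{i-1}}$ satisfying $\bar{a}_{i-1}v_{a_{i-1}}=y_i-v_{a_i}\bar{a}_i$ for $i=n-1,\ldots,2$; and finally take $v_\epsilon$ with $v_\epsilon\epsilon=y_1-v_{a_1}\bar{a}_1$. Each step is solvable because a direct basis computation shows that left-multiplication by $\bar{a}_{i-1}$ sends $e_{i-1}\Lambda e_i$ onto $e_iJe_i$ for every $i=2,\ldots,n$, and that multiplication by $\epsilon$ sends $e_1\Lambda e_1$ onto $e_1Je_1$. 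For the injectivity of $L|_{V^{1,-}}$: if $L(v)=0$ with $v\in V^{1,-}$, then $v_\epsilon=0$ (since $\sigma$-antisymmetry forces this on the $\epsilon$-component), the vertex-$1$ equation reduces to $v_{a_1}\bar{a}_1=0$, and the vertex-$i$ equations reduce to $v_{a_i}\bar{a}_i=\bar{a}_{i-1}v_{a_{i-1}}$. The injectivity of right-multiplication by $\bar{a}_i$ on $e_i\Lambda e_{i+1}$ (each basis element maps to a distinct nonzero basis element) then inductively forces $v_{a_i}=0$ for all $i$, so $v=0$.

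The main technical step is the basis-level analysis of the maps $\bar{a}_{i-1}\cdot$ and $\cdot\,\bar{a}_i$, which follows by mechanical bookkeeping using the preprojective relations $\bar{a}_{n-1}a_{n-1}=0$, $\bar{a}_{j-1}a_{j-1}=-a_j\bar{a}_j$ for interior $j$, and $a_1\bar{a}_1=-\epsilon^2$. Putting everything together, $\dim Im(R^*)=\dim V^{1,+}-\dim(\ker L\cap V^{1,+})=n^2$ (the kernel having dimension $n$ from the top-down solution being non-unique at each socle-hitting step), and $\dim Im(R_\tau^*)=\dim V^{1,-}=n^2-n$, giving the asserted codimension $n$ inside $\oplus_i e_iJe_i$.
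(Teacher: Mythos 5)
Your argument is correct, and its engine is the same as the paper's: both proofs start from the observation that the canonical antiautomorphism $x\mapsto\bar{x}$ acts as the identity on $\oplus_{i}e_i\Lambda e_i$ while intertwining $R^*$ (resp.\ $R_\tau^*$) with itself (resp.\ its negative), thereby cutting the source space in half. Where you diverge is in how the remaining linear algebra is organized. The paper restricts $R^*$ and $R_\tau^*$ to the ``non-bar'' half of $\oplus_a e_{i(a)}\Lambda e_{t(a)}$, stratifies by path-length degree, and writes down explicit $(n-m)\times(n-m+1)$ and upper-triangular square matrices whose ranks are read off directly. You instead pass to the $\pm1$-eigenspaces of the involution $\sigma$, note that $R^*=2L$ on $V^{1,+}$ and $R_\tau^*=2L$ on $V^{1,-}$ (which, as a bonus, explains structurally why the $\epsilon$-component contributes nothing to $Im(R_\tau^*)$, something the paper only sees from the vanishing first column of its matrices), and then run a recursion along the vertices of $\mathbb{L}_n$ rather than along the grading. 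The two organizations buy roughly the same thing; yours is conceptually tidier, the paper's is more self-contained. The one caution is that your ``mechanical bookkeeping'' step --- that left multiplication by $\bar{a}_{i-1}$ maps $e_{i-1}\Lambda e_i$ \emph{onto} $e_iJe_i$ and that right multiplication by $\bar{a}_i$ is \emph{injective} on $e_i\Lambda e_{i+1}$ --- is precisely the nontrivial content of the lemma (it is what the paper's matrix entries such as $R^*(v_{a_i})=b_i^{2m}+(-1)^mb_{i+1}^{2m}$ encode), so in a final write-up it should be verified degree by degree on the basis of Proposition \ref{prop.fixed basis to work} rather than merely asserted; both claims do check out against the relations $\bar{a}_{n-1}a_{n-1}=0$, $\bar{a}_{j-1}a_{j-1}=-a_j\bar{a}_j$ and $a_1\bar{a}_1=-\epsilon^2$.
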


\begin{proof} We put $V= \oplus_{a\in Q_1}e_{i(a)}\Lambda e_{t(a)}$
and $W=\oplus_{i\in Q_0}e_iJe_i$ for simplicity and view $R^*$ and
$R_{\tau}^*$ as $K$-linear maps $V\longrightarrow W$. For each
$0\leq k < 2n$ we denote by $V_k$ (resp. $W_k$) the vector subspace
consisting of the elements os degree $k$. Since both $R^*$ and
$R_{\tau}^*$ are graded maps of degree 1 we have induced $K$-linear
maps

$$R^*, R_{\tau}^*: V_{k-1}\longrightarrow W_k$$ for $k=1, \dots, 2n-1$.

It is important now to notice that the canonical antiisomorphism of
$\Lambda$, $x\rightsquigarrow \bar{x}$, is the identity on $W$.
Moreover, we have equalities $R^*(\bar{p})= \overline{R^*(p)}$ and
$R_{\tau}^*(\bar{p})= -\overline{R_{\tau}^*(p)}$. We then get
$R^*(\bar{p})= R^*(p)$ and $R_{\tau}^*(\bar{p})= - R_{\tau}^*(p)$.
This tells us that the images of the maps $R^*, R_{\tau}^*:
V\longrightarrow W$ are the same as those of their restrictions to
$V^{+}= V\bigcap (\oplus_{j=0}^{n-1}e_{i(a_j)}\Lambda e_{t(a_j)})$
(convening that $a_0=\epsilon$). Those images are in turn the direct
sum of the images of the induced maps

$$R^*, R_{\tau}^*: V_{k-1}^+ \longrightarrow W_k \hspace{1.5cm} (k=1, \dots 2n-1)$$ and thus the ones we shall calculate.

Let us denote by $b_i^t$ the only element in $e_iBe_i$ of degree
$t$.

We start by considering the case when $k=2m$ is even $(1\leq m \leq
n-1)$. In that situation, a basis of $W_{2m}$ is given by
$\{b_1^{2m}, b_2^{2m}, \dots, b_{n-m}^{2m}\}$ while a basis of
$V_{2m-1}^+$ is $\{v_{\epsilon}, v_{a_1}, \dots, v_{a_{n-m}}\}$
 where $v_{\epsilon}= \epsilon^{2m-1}$ and $v_{a_i}= a_i\cdots a_{i+m-1}\bar{a}_{i+m-1}
 \cdots \bar{a}_{i+1}$ for $i=1, \dots , n-m$. In particular $\text{dim}(V_{2m-1}^+)=
 n-m+1$ and $\text{dim}(W_{2m})= n-m$. Direct computation, using remark \ref{Basis properties},  shows that

 \begin{enumerate}[i)]

    \item $R^*(v_{\epsilon})= 2b_1^{2m}$, \hspace{0.5cm} $R_{\tau}^*(v_{\epsilon})=0$

    \item $R^*(v_{a_1})= (-1)^{\frac{(m+1)m}{2}}b_1^{2m} + (-1)^m b_2^{2m}$

        $R_{\tau}^*(v_{a_1})= (-1)^{\frac{(m+1)m}{2}}b_1^{2m} + (-1)^{m+1}b_2^{2m}$

    \item $R^*(v_{a_i})= b_i^{2m} + (-1)^{m}b_{i+1}^{2m}$

        $R_{\tau}^*(v_{a_i})= b_i^{2m} + (-1)^{m+1}b_{i+1}^{2m}$

    (convening that $b_j^{2m}=0$ if $j>n-m$)

 \end{enumerate} Then in the matrices of $R^*$ and $R_{\tau}^*$ with respect to the
 given bases of $V_{2m-1}^+$ and $W_{2m}$, which are both of size $(n-m) \times
 (n-m+1)$, the columns from the $2^{nd}$ to the $(n-m+1)-th$ are
 linearly independent. We then get that the maps $R^*, R_{\tau}^*: V_{2m-1}\longrightarrow W_{2m}$
 are both surjective for each $m=1, \dots n-1$.

 We now deal with the case when $k=2m-1$ is odd, in which case a
 basis of $W_{2m-1}$ is $\{b_1^{2m-1}, \dots b_m^{2m-1}\}$. On the
 other hand, a basis of $V_{2m-2}^+$ is given by $\{v_{\epsilon}', v_{a_1}', \dots
 v_{a_{m-1}}'\}$,
 where $v_{\epsilon}'= \epsilon^{2m-2}$ and $v_{a_i}'= \bar{a}_{i-1}\cdots \bar{a}_1
 \epsilon^{2(m-i)-1}a_1\cdots a_i$ for $i=1, \dots , m-1$. Direct
 calculation, using again remark \ref{Basis properties}, shows the following:

 \begin{enumerate}[i)]

    \item $R^*(v_{\epsilon}')= 2b_1^{2m-1}$, \hspace{0.5cm} $R_{\tau}^*(v_{\epsilon}')=0$

    \item  If $m\neq n$ then

    $R^*(v_{a_i}')= (-1)^ib_i^{2m-1} + b_{i+1}^{2m-1}$

    $R_\tau^*(v_{a_i}')= (-1)^ib_i^{2m-1} - b_{i+1}^{2m-1}$

    \item If $m=n$ then

    $R^*(v_{a_i}')= (-1)^{\frac{(i+1)i}{2}}b_i^{2n-1} +
    (-1)^{\frac{(i+1)i}{2}}b_{i+1}^{2n-1}=(-1)^{\frac{(i+1)i}{2}}(w_i+w_{i+1})$

    $R_{\tau}^*(v_{a_i}')= (-1)^{\frac{(i+1)i}{2}}b_i^{2n-1} -
    (-1)^{\frac{(i+1)i}{2}}b_{i+1}^{2n-1}=(-1)^{\frac{(i+1)i}{2}}(w_i-w_{i+1})$

 \end{enumerate}Therefore, the square matrices of $R^*$ and $R_{\tau}^*$ with
 respect to the given bases of $V_{2m-2}^+$ and $W_{2m-1}$ are upper
 triangular. In the case of $R^*$ all its diagonal entries are
 nonzero while in the case of $R_{\tau}^*$ only the entry $(1,1)$ is
 zero. It follows:

 \begin{enumerate}[a)]

    \item The map $R^*: V_{2m-2}\longrightarrow W_{2m-1}$ is
    surjective for all $m=1, \dots n$.

    \item The image of the map $R_{\tau}^*: V_{2m-2}\longrightarrow
    W_{2m-1}$ has codimension $1$ in $W_{2m-2}$ for all $m=1, \dots,
    n$.

 \end{enumerate}

 The final conclusion is that the map $R^*: V\longrightarrow W$ is
 surjective while the image of $R_{\tau}^*: V\longrightarrow W$ has
 codimension exactly the number of odd numbers in $\{1,2, \dots
 2n-1\}$. That is $\text{dim}(W)- \text{dim}(Im(R_{\tau}^*))=n$.

\end{proof}

\begin{rem}\label{w_j - w_j+1 belongs to Im(RTauStar)}\rm The proof of
lemma \ref{Image of RStar and RtauStar} gives that if $\omega_j$ is
viewed as an element of $Ker(k_{\tau}^*)$ $\forall j\in Q_0$, then
$\omega_j - \omega_{j+1}\in Im(R_{\tau}^*)$ $\forall j=1,2,\dots,
n-1$.

\end{rem}

\begin{lema}\label{Center of lambda} The center of $\Lambda$ is isomorphic to $\frac{K[x_0, x_1, \dots
x_n]}{I}$, where $I$ is the ideal of $K[x_0, x_1, \dots x_n]$
generated by $x_0^n$ and all the products $x_ix_j$ with
$(i,j)=(0,0)$. In particular, $\text{dim}(HH^0(\Lambda))=2n$.

\end{lema}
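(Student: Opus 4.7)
The plan is to compute $HH^0(\Lambda)\cong Z(\Lambda)=\ker(\delta^*)$ from the complex $V^\bullet$ of Proposition \ref{cohomology complex}: an element $z=\sum_i c_i$ with $c_i\in e_i\Lambda e_i$ is central if and only if $c_ia=ac_j$ for every arrow $a:i\to j$ in $Q$. Since $\delta^*$ preserves the length grading inherited from $KQ$, the space $Z(\Lambda)$ is graded, and I will compute $\dim Z(\Lambda)_d$ separately in each degree $d$. Two contributions are immediate: first, the socle $\mathrm{Soc}(\Lambda)=\bigoplus_{i=1}^{n}K\omega_i$ lies entirely in $Z(\Lambda)$ because each arrow annihilates $\omega_i$ on both sides, contributing $n$ central elements all in degree $2n-1$; second, in degree $0$ the condition $\lambda_i=\lambda_j$ along each arrow $a:i\to j$ and the connectedness of $Q$ force $Z(\Lambda)_0=K\cdot 1$. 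What remains is to show $\dim Z(\Lambda)_{2k-1}=0$ and $\dim Z(\Lambda)_{2k}=1$ for $1\le k\le n-1$.

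For the odd-degree vanishing, Corollary \ref{Basis properties} shows that the basis cycles $b_i^{2k-1}$ at vertex $i$ exist precisely for $i=1,\ldots,k$. Writing $z=\sum_{i=1}^{k}\beta_i\,b_i^{2k-1}$, the centrality condition at the arrow $a_k:k\to k+1$ collapses to $\beta_k\,b_k^{2k-1}\,a_k=0$, because $b_{k+1}^{2k-1}$ does not exist. A short calculation using the relations $\epsilon^2=-a_1\bar{a}_1$ and $a_j\bar{a}_j=-\bar{a}_{j-1}a_{j-1}$ identifies $b_k^{2k-1}a_k$ up to sign with the ``epsilon-type'' basis element $\bar{a}_{k-1}\cdots\bar{a}_1\epsilon\,a_1\cdots a_{k-1}a_k$ of degree $2k$ in $e_k\Lambda e_{k+1}$, which is nonzero by Proposition \ref{prop.fixed basis to work}; this forces $\beta_k=0$. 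An induction on the arrows $a_{k-1},a_{k-2},\ldots,a_1$ then propagates $\beta_i=0$ for every $i$. For an even degree $2k$ with $1\le k\le n-1$, I parametrize $z=\alpha_1\epsilon^{2k}+\sum_{i=2}^{n-k}\alpha_i\,a_ia_{i+1}\cdots a_{i+k-1}\bar{a}_{i+k-1}\cdots\bar{a}_i$; the centrality conditions at the arrows $a_j$ with $j<n-k$ yield a recursive chain $\alpha_{j+1}=\pm\alpha_j$ with explicit signs, while the boundary condition at $a_{n-k}$ is automatic because the product $c_{n-k}^{(2k)}a_{n-k}$ would live in $e_{n-k}\Lambda e_{n-k+1}$ at degree $2k+1$, a degree not present in that subspace. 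This gives $\dim Z(\Lambda)_{2k}=1$ and the total $\dim Z(\Lambda)=n+1+(n-1)=2n$.

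For the ring structure, set $x_0:=\epsilon^2+\sum_{i=2}^{n-1}(-1)^i a_i\bar{a}_i$ (the generator found in degree~$2$ above) and $x_i:=\omega_i$ for $1\le i\le n$. An induction on $k$, using the same relations, shows that $x_0^k$ has $\epsilon^{2k}$ as its component at vertex~$1$ and therefore is a nonzero central element of degree $2k$ for $0\le k\le n-1$, which by the previous paragraph must span $Z(\Lambda)_{2k}$. Since $\Lambda$ is concentrated in degrees $\le 2n-1$, the power $x_0^n$ has degree $2n$ and must vanish, while every product $x_ix_j$ with $(i,j)\ne(0,0)$ lies in a graded piece of degree $\ge 2n$ and thus also vanishes. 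Collecting these relations yields the presentation $Z(\Lambda)\cong K[x_0,x_1,\ldots,x_n]/I$ claimed, and the basis $\{1,x_0,\ldots,x_0^{n-1},x_1,\ldots,x_n\}$ confirms $\dim HH^0(\Lambda)=2n$. The main technical obstacle is the verification that $b_k^{2k-1}a_k$ survives as a nonzero basis element of $e_k\Lambda e_{k+1}$ for every $k<n$ (which is what actually collapses the odd-degree slices), together with the careful sign bookkeeping in the even-degree recursion required to identify the generator of $Z(\Lambda)_{2k}$ with the power $x_0^k$.
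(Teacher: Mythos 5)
Your proposal is correct and follows essentially the same route as the paper: both grade $Z(\Lambda)$ by length degree, use the arrow-commutation conditions $c_ia=ac_j$ against the explicit basis of Proposition \ref{prop.fixed basis to work} to kill the odd-degree components below the socle and to pin each even-degree piece $Z(\Lambda)_{2k}$ down to the line $Kx_0^k$, and then read off the relations $x_0^n=0$, $x_ix_j=0$ by degree reasons. The only cosmetic difference is that you phrase the even-degree step as a one-parameter recursion $\alpha_{j+1}=\pm\alpha_j$ with an automatic boundary condition at $a_{n-k}$, whereas the paper propagates vanishing coefficients and then compares with $x_0^m$; these are the same computation.
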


\begin{proof} It is well-known that $Z(\Lambda)\subseteq \oplus_{i\in Q_0}e_i\Lambda
e_i$, that $rad(Z(\Lambda))= Z(\Lambda)\bigcap rad(\Lambda)$ and
$\frac{Z(\Lambda)}{rad(\Lambda)}= K\cdot1= K(e_1+ \cdots e_n)$.
Since $\Lambda$ is graded one readily sees that the grading on
$\Lambda$ gives by restriction a grading on $Z(\Lambda)$.

We claim that if $z\in Z(\Lambda)_{2m-1}$ is an element of odd
degree $2m-1$, then $m=n$ and $z$ is a linear combination of the
socle elements $\omega_1, \dots, \omega_n$. Indeed we have $z=
\sum_{i=1}^r \lambda_i b_i^{2m-1}$, with $\lambda_r\neq 0$, for some
integer $1\leq r\leq m$.  If $r<n$ then $\lambda_rb_r^{2m-1}a_r=
za_r=a_rz=0$, and hence $0= b_r^{2m-1}a_r= \bar{a}_{r-1}\cdots
\bar{a}_1\epsilon^{2(m-r)+1}a_1\cdots a_{r-1}a_r$. This only happens
when $m=n$, in which case $b_r^{2m-1}= b_r^{2n-1}= \omega _r$. On
the other hand, if $r=n$ then $n=m$ and we are done also in this
case.

The previous paragraph shows that $Z(\Lambda)_{odd}:=
\oplus_{m>0}Z(\Lambda)_{2m-1}= \sum_{i\in Q_0}K\omega_i=
Soc(\Lambda)$ since $\omega_i\in Z(\Lambda)_{2n-1}$ for each $i\in
Q_0$. We now want to identify
$Z(\Lambda)_{even}^+:=\oplus_{m>0}Z(\Lambda)_{2m}$. One easily
checks that $x_0= \sum_{i=1}^{n-1}(-1)^i a_i\bar{a}_i= b_1^2 +
\sum_{i=2}^{n-1}(-1)^ib_i^2$ is an element of $Z(\Lambda)_2$.
Moreover $(b_i^2)^m\neq 0$ if and only if $1\leq i\leq n-m$ and
$m<n$. In this case we necessarily have an equality $(b_i^2)^m=
(-1)^{t_i}b_i^{2m}$, for some integer exponent $t_i$. In particular
$x_0^m\neq 0$ and $x_0^m= \sum_{i=0}^{n-m}\lambda_ib_i^{2m}$, with
scalars $\lambda_i$ all nonzero. We claim that if $0\neq z\in
Z(\Lambda)_{2m}$ and we write it as a $K$-linear combination
$z=\sum_{i=1}^{n-m}\mu_ib_i^{2m}$, then $\mu_i\neq 0$ for all $i=1,
\dots , n-m$. Suppose that it is not the case. We first prove that
if $\mu_j=0$ then $\mu_i=0$ for each $i\leq j$. for that we can
assume $j>1$ and then we have

$$0= \mu_ja_{j-1}b_j^{2m}= a_{j-1}z= za_{j-1}= \mu_{j-1}b_{j-1}^{2m}a_{j-1}$$ But $b_{j-1}^{2m}a_{j-1}\neq 0$ since $j\leq n-m \leq n-1$ and so
$j-1< n-m$. It follows that $\mu_{j-1}=0$ and, by iterating the
process, that $\mu_i=0$ $\forall i\leq j$.

We can then write $z= \sum_{i=r}^{n-m} \mu_i b_i^{2m}$ for some
$1\leq r\leq n-m$ and some $mu_i\neq 0$ $\forall i=r, \dots, n-m$.
We prove that $r=1$ and our claim will be settled. Indeed, if
$r>1$ then we have

$$\mu_ra_{r-1}b_r^{2m}= a_{r-1}z= za_{r-1}=0$$ which implies that $\mu_r=0$ since $a_{r-1}b_r^{2m}\neq 0$. This is
a contradiction.

Once we know that if $z\in Z(\Lambda)_{2m}\backslash 0\}$ and
$z=\sum_{i=1}^{n-m}\mu_ib_i^{2m}$ then $\mu_i\neq 0$ $\forall i=1,
\dots n-m$, we conclude that any such $z$ is a scalar multiple of
$x_0^m$. Then $Z(\Lambda)_{2m}= Kx_0^m$, for each $m>0$.

Putting now $x_i=\omega_i$ $\forall i=1, \dots, n$ we clearly have
that $x_0, x_1, \dots x_n$ generate $Z(\Lambda)$ as an algebra and
they are subject to the relations $x_0^n=0$ and $x_ix_j=0$ for
$(i, j)\neq (0,0)$.

\end{proof}

We are now ready to prove the main result of this section:

\begin{teor}\label{dimension of HH^i}

Let us assume that $Char(K)\neq 2$ and let $\Lambda
=P(\mathbb{L}_n)$ be the preprojective algebra of type
$\mathbb{L}_n$. Then  $\text{dim}(HH^0(\Lambda))=
\text{dim}(HH_0(\Lambda))= 2n$ and $\text{dim}(HH^i(\Lambda))=
\text{dim}(HH_i(\Lambda))= n$ for all $i>0$.

\end{teor}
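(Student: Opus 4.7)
The plan is to compute each $\dim HH^i(\Lambda)$ from the cochain complex $V^\bullet$ of Proposition \ref{cohomology complex}. Observe that the terms $V^i$ are $3$-periodic (either $\bigoplus_i e_i\Lambda e_i$ or $\bigoplus_a e_{i(a)}\Lambda e_{t(a)}$) while the differentials are $6$-periodic, so the whole complex is periodic of period $6$ from degree $\geq 1$ onward; consequently $\dim HH^{i+6}=\dim HH^i$ for $i\geq 1$, and it suffices to compute $\dim HH^i$ for $0\leq i\leq 6$. The homology statement will follow at the end from Corollary \ref{structure of homology as graded module}, which supplies the isomorphism $HH_{-*}(\Lambda)\cong D(HH^*(\Lambda))$ and hence $\dim HH_i(\Lambda)=\dim HH^i(\Lambda)$ for all $i\geq 0$.

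The degree $0$ case is exactly Lemma \ref{Center of lambda}: $\dim HH^0(\Lambda)=\dim Z(\Lambda)=2n$. For the remaining degrees I would assemble four ingredients: the dimensions $\dim V^{3k}=\dim V^{3k+2}=n^2+n$ and $\dim V^{3k+1}=2n^2$ taken from Corollary \ref{Basis properties}(6)(7); the rank computations $\dim\mathrm{Im}(R^*)=n^2$ and $\dim\mathrm{Im}(R_\tau^*)=n^2-n$ from Lemma \ref{Image of RStar and RtauStar}; the vanishing $k^*=0$ from Proposition \ref{cohomology complex}(c); and the observation that $k_\tau^*$ restricted to $\bigoplus_i Ke_i$ has matrix equal, up to sign, to the Cartan matrix of $\Lambda$, whose determinant $2^n$ is nonzero in characteristic $\neq 2$. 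The latter forces $\dim\mathrm{Im}(k_\tau^*)=n$ and $\dim\ker(k_\tau^*)=n^2$. Rank-nullity then yields $\dim HH^1=n^2-(n^2-n)=n$, $\dim HH^2=(n^2+n)-n^2=n$, $\dim HH^5=n^2-(n^2-n)=n$, and $\dim HH^6=2n-n=n$.

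The main obstacle is degrees $3$ and $4$, because no analogue of Lemma \ref{Image of RStar and RtauStar} describes $\ker(\delta_\tau^*)$ directly; the canonical antiautomorphism $\overline{(-)}$, which helped separate $R^*$ from $R_\tau^*$, is no longer enough to break $\delta_\tau^*$ into easy pieces. I would sidestep this by invoking the Eu--Schedler duality, Theorem \ref{Eu-Schedler}: since $\Lambda$ is symmetric and periodic of period $6$ it is $5$-Calabi--Yau Frobenius, and the graded Frobenius structure on $\underline{HH}^*(\Lambda)$ forces $\dim\underline{HH}^i=\dim\underline{HH}^{11-i}$. Combining this with the $6$-periodicity of the stable cohomology gives $\dim\underline{HH}^i=\dim\underline{HH}^{5-i}$ and hence, using $HH^i=\underline{HH}^i$ for $i\geq 1$, we obtain $\dim HH^3=\dim HH^2=n$ and $\dim HH^4=\dim HH^1=n$.

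Putting these computations together with the $6$-periodicity $\dim HH^i=\dim HH^{i+6}$ for $i\geq 1$ yields $\dim HH^i=n$ for every $i\geq 1$, and then the duality $HH_{-*}(\Lambda)\cong D(HH^*(\Lambda))$ from Corollary \ref{structure of homology as graded module} transfers the identical statement to $HH_i(\Lambda)$, completing the proof.
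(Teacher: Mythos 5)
Your argument is correct and follows essentially the same route as the paper: degree $0$ from Lemma \ref{Center of lambda}, degrees $1$ and $2$ by rank counting in the complex $V^\bullet$ via Lemma \ref{Image of RStar and RtauStar} and Corollary \ref{Basis properties}, and degrees $3$ and $4$ via the Calabi--Yau Frobenius duality $\dim\underline{HH}^i=\dim\underline{HH}^{5-i}$, with homology handled by $HH_{-*}(\Lambda)\cong D(HH^*(\Lambda))$. The only (harmless) deviation is that you compute $HH^5$ and $HH^6$ directly from the ranks of $R_\tau^*$ and $k_\tau^*$ (using the invertibility of the Cartan matrix in characteristic $\neq 2$), whereas the paper obtains them from the duality $D(\underline{HH}^0)\cong HH^5$ and the $6$-periodicity of $\underline{HH}^*(\Lambda)$.
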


\begin{proof} By the isomorphism $HH_{-*}(\Lambda)\cong
D(HH^*(\Lambda))$ (see Remark \ref{Stable duality}), it is enough to
calculate the dimensions of the Hochschild cohomology spaces.

On the other hand, by Corollary \ref{structure of homology as graded
module}, we have and isomorphism $\underline{HH}^*(\Lambda ))\cong
\underline{HH}^*(\Lambda ))[6]$. We then  get isomorphisms of
$K$-vector spaces

\begin{center}
$HH^{6k}(\Lambda )\cong \underline{HH}^{0}(\Lambda
)=\frac{HH^0(\Lambda )}{\mathcal{P}(\Lambda ,\Lambda
)}=\frac{Z(\Lambda )}{Soc(\Lambda )}$

$HH^{6k+i}(\Lambda )\cong HH^i(\Lambda )$,
\end{center}
for all $k>0$ and $i=1,2,3,4,5$.

By the same corollary, we have an isomorphism
$D(\underline{HH}^*(\Lambda ))\cong \underline{HH}^*(\Lambda )[5]$,
which gives isomorphisms of $K$-vector spaces:

\begin{center}
$D(\underline{HH}^0(\Lambda ))\cong HH^5(\Lambda)$

$D(HH^1(\Lambda ))\cong HH^4(\Lambda )$

$D(HH^2(\Lambda ))\cong HH^3(\Lambda )$.
\end{center}

Bearing in mind Lemma 4.3, the proof is  reduced to check that

\begin{center}
$\text{dim}(\frac{Z(\Lambda )}{Soc(\Lambda
)})=\text{dim}(HH^1(\Lambda ))=\text{dim}(HH^2(\Lambda ))=n$.
\end{center}

That $\text{dim}(\frac{Z(\Lambda )}{Soc(\Lambda )})=n$ follows
directly from Lemma \ref{Center of lambda} and its proof.  Moreover,
we have two exact sequences

$$0\longrightarrow Ker(R^*)\hookrightarrow \oplus_{a\in Q_1}e_{i(a)}\Lambda e_{t(a)}
\stackrel{R^*}{\longrightarrow}\oplus_{i\in
Q_0}e_iJe_i\longrightarrow 0$$

$$0\longrightarrow Z(\Lambda)\hookrightarrow \oplus_{i\in Q_=}e_i\Lambda e_i
\longrightarrow Im(\delta^*)\longrightarrow 0$$

From the first one we get $\text{dim}(Ker(R^*))= 2n^2 - n^2= n^2$
using Lemma \ref{Image of RStar and RtauStar} and Corollary
\ref{Basis properties}. From the second sequence we get
$\text{dim}(Im(\delta^*))= (n^2 + n) - 2n= n^2 - n$ using lemma
\ref{Center of lambda}. It follows that
$\text{dim}(HH^1(\Lambda))=n$.

We also have that $HH^2(\Lambda)\cong Coker(R^*)$ since $k^*= 0$.
But $Im(R^*)= \oplus_{i\in Q_0}e_iJe_i$ by  lemma \ref{Image of
RStar and RtauStar}. It follows that
$\text{dim}(HH^2(\Lambda))=\text{dim}( \oplus_{i\in
Q_0}\frac{e_i\Lambda e_i}{e_iJe_i})=n$.

\end{proof}

Once we have computed the dimensions of the Hochschild (co)homology
spaces of $\Lambda$, we can do the same for its cyclic homology
spaces in characteristic zero, denoted by $HC_i(\Lambda )$ following
the notation used in \cite{Lo}. We start by recalling the following
fact about graded algebras.

\begin{prop} \label{prop.cyclic homology}
Suppose $Char(K)=0$ and let $A= \oplus_{i\geq 0}A_i$ be a positively
graded algebra such that $A_0$ is a semisimple algebra. The
following assertions hold:

\begin{enumerate}

    \item As $K$-vector spaces $HC_i(A_0)\cong \left\{\begin{array}{cc}

    0\hspace{0.25cm} \text{if i is odd}\\

    A_0 \hspace{0.25cm} \text{if i is even}\\

    \end{array}\right.$

    \item Connes' boundary map $B$ induces an exact sequence

    $$0\longrightarrow A_0 \longrightarrow HH_0(A)\stackrel{B}{\longrightarrow} HH_1(A)
    \stackrel{B}{\longrightarrow}HH_2(A)\longrightarrow \cdots$$
    such that the image of $B: HH_i(A)\longrightarrow HH_{i+1}(A)$
    is isomorphic to $\frac{HC_i(A)}{HC_i(A_0)}$, for all $n\geq 0$.

\end{enumerate}

\end{prop}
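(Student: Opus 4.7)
The plan is to combine the Connes--Loday $SBI$ sequence of cyclic homology with the extra structure given by the positive grading. Part (1) is a direct application of $SBI$ to $A_0$: since $A_0$ is separable (semisimple in characteristic zero), one has $HH_0(A_0)=A_0/[A_0,A_0]$ and $HH_i(A_0)=0$ for all $i>0$ (hence $HH_0(A_0)\cong A_0$ in the commutative setting in which the proposition will be used). Inserting these values in
\[
\cdots\longrightarrow HC_{n+1}(A_0)\xrightarrow{S}HC_{n-1}(A_0)\xrightarrow{B}HH_n(A_0)\xrightarrow{I}HC_n(A_0)\xrightarrow{S}HC_{n-2}(A_0)\longrightarrow\cdots
\]
yields inductively that $HC_{2k}(A_0)\cong A_0$, $HC_{2k+1}(A_0)=0$, and that each nonzero $S$ is an isomorphism.

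For Part (2) the essential input is the positively graded case of Goodwillie's theorem: over a field of characteristic zero, if $A=\bigoplus_{i\geq 0}A_i$ is a positively graded algebra with $A_0$ separable, then the periodicity operator $S$ vanishes on the reduced cyclic homology $\overline{HC}_*(A):=HC_*(A)/HC_*(A_0)$. The algebra retraction $A\twoheadrightarrow A_0$ (provided by the grading) together with the inclusion $A_0\hookrightarrow A$ induces compatible direct sum decompositions $HH_*(A)=HH_*(A_0)\oplus\overline{HH}_*(A)$ and $HC_*(A)=HC_*(A_0)\oplus\overline{HC}_*(A)$ respecting $B$, $I$ and $S$, so the whole discussion splits off the ``$A_0$-part'' computed in step 1. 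Establishing the vanishing of $S|_{\overline{HC}_*(A)}$ is the hard step and the only place where $\mathrm{char}(K)=0$ is genuinely used; the standard proof proceeds via a Karoubi--Villamayor style contracting homotopy on the reduced mixed complex (see Loday, \emph{Cyclic Homology}, \S4.1). I will quote this result rather than reprove it.

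Granting Goodwillie's vanishing, the $SBI$ sequence of $A$, restricted to the reduced summand, collapses into short exact sequences
\[
0\longrightarrow \overline{HC}_{n-1}(A)\xrightarrow{\,B\,}HH_n(A)\xrightarrow{\,I\,}\overline{HC}_n(A)\longrightarrow 0\qquad(n\geq 1).
\]
Splicing these along the common term $\overline{HC}_n(A)$ produces the infinite chain $HH_1(A)\to HH_2(A)\to\cdots$ in which the composite $B\circ I\colon HH_n(A)\to HH_{n+1}(A)$ coincides with Connes' boundary operator on Hochschild homology, and whose image at each stage is exactly $\overline{HC}_n(A)=HC_n(A)/HC_n(A_0)$, as required. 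To attach the starting segment, note that $SBI$ gives $HH_0(A)\cong HC_0(A)$ (since $HC_{-1}=HC_{-2}=0$) and that $HC_0(A_0)\cong A_0$ by Part (1); hence the kernel of $B\colon HH_0(A)\to HH_1(A)$ is precisely the image of $A_0$ in $HH_0(A)$, yielding the desired exact sequence
\[
0\longrightarrow A_0\longrightarrow HH_0(A)\xrightarrow{B}HH_1(A)\xrightarrow{B}HH_2(A)\longrightarrow\cdots.
\]
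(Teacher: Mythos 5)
Your proof is correct and follows essentially the same route as the paper: part (1) via the $SBI$ sequence for $A_0$ using $HH_{>0}(A_0)=0$, and part (2) by quoting the vanishing of $S$ on reduced cyclic homology of a positively graded algebra in characteristic zero (the paper cites this as Loday, Theorem 4.1.13) to collapse $SBI$ into short exact sequences on the reduced parts and then splicing. Your remark that $HC_{2k}(A_0)\cong A_0$ really requires $A_0/[A_0,A_0]\cong A_0$ (automatic in the intended application where $A_0=KQ_0$) is a fair refinement that the paper leaves implicit.
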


\begin{proof} Assertion $1$ is well-known, and is a direct
consequence of Connes' periodicity exact sequence (\cite{Lo},
Theorem 2.2.1) and the fact that $HH_i(A_0)=0$, for all $i>0$.

On the other hand, by \cite{Lo}, Theorem 4.1.13, we know that
Connes' periodicity exact sequence gives exact sequences:

$$0\longrightarrow \frac{HC_{i-1}(A)}{HC_{i-1}(A_0)}\stackrel{B}{\longrightarrow}
\frac{HH_i(A)}{HH_i(A_0)}\stackrel{I}{\longrightarrow}\frac{HC_i(A)}{HC_i(A_0)}
\longrightarrow 0$$ for all $i\geq 0$. Since $HH_i(A_0)=0$, for
$i>0$, we get an induced $K$-linear map $B\circ I:
HH_i(A)\longrightarrow HH_{i+1}(A)$ such that $Im(B\circ I)= Im
(B)\cong \frac{HC_i(A)}{HC_i(A_0)}.$

\end{proof}

\begin{cor} \label{cor:cyclic homology spaces}
If $\Lambda= P(\mathbb{L}_n)$ is the preprojective algebra of type
$\mathbb{L}_n$, then

$$dim HC_i(\Lambda)=\left\{\begin{array}{cc}

    0\hspace{0.25cm} \text{if i is odd} \\

    2n\hspace{0.25cm} \text{if i is even}  \\

    \end{array}\right.$$

\end{cor}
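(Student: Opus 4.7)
The plan is to apply Proposition \ref{prop.cyclic homology} to $A=\Lambda$, viewed as a positively graded algebra with the natural (length) grading, whose degree-zero part $A_0=\bigoplus_{i=1}^{n}Ke_i$ is semisimple of dimension $n$. Combining assertion $1$ of that proposition with the computations in characteristic zero of the Hochschild homology dimensions, namely $\dim HH_0(\Lambda )=2n$ and $\dim HH_i(\Lambda )=n$ for all $i>0$ (Theorem \ref{dimension of HH^i}), the result should follow from a dimension count along Connes' exact sequence.

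First, I would extract from assertion $2$ of Proposition \ref{prop.cyclic homology} the long exact sequence
$$0\longrightarrow A_0\longrightarrow HH_0(\Lambda )\stackrel{B}{\longrightarrow} HH_1(\Lambda )\stackrel{B}{\longrightarrow}HH_2(\Lambda )\stackrel{B}{\longrightarrow}\cdots$$
and denote by $B_i:HH_i(\Lambda )\to HH_{i+1}(\Lambda )$ the induced maps. Because the first arrow $A_0\hookrightarrow HH_0(\Lambda )$ is injective of dimension $n$, exactness at $HH_0(\Lambda )$ forces $\dim\ker(B_0)=n$ and therefore $\dim\mathrm{Im}(B_0)=2n-n=n$. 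Proceeding inductively and using $\dim HH_i(\Lambda )=n$ for $i\geq 1$, exactness at $HH_{i+1}(\Lambda )$ gives the alternating pattern
$$\dim\mathrm{Im}(B_i)=\begin{cases} n & \text{if }i\text{ is even,}\\ 0 & \text{if }i\text{ is odd,}\end{cases}$$
since each kernel equals the previous image and each quotient of an $n$-dimensional space by a subspace of dimension $n$ is $0$, while each quotient by a subspace of dimension $0$ is $n$-dimensional.

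To conclude, by assertion $2$ of Proposition \ref{prop.cyclic homology} one has $\mathrm{Im}(B_i)\cong HC_i(\Lambda )/HC_i(A_0)$ and by assertion $1$ one has $\dim HC_i(A_0)=n$ if $i$ is even and $0$ if $i$ is odd. Adding the two contributions yields $\dim HC_i(\Lambda )=2n$ in the even case and $\dim HC_i(\Lambda )=0$ in the odd case, as claimed.

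There is no real obstacle here; the only thing to double-check is that Proposition \ref{prop.cyclic homology} applies, which amounts to verifying that the length grading of $\Lambda =KQ/I$ makes it a positively graded algebra with $A_0=KQ_0$ semisimple (this is standard since $I$ is generated by homogeneous elements of degree $2$). Everything else is a straightforward induction and additivity of dimensions, which is why the argument reduces to the bookkeeping outlined above.
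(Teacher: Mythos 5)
Your argument is correct and is essentially the paper's own proof: both apply Proposition \ref{prop.cyclic homology} to $\Lambda$ with $A_0=KQ_0$, run the same dimension count along Connes' sequence using $\dim HH_0(\Lambda)=2n$ and $\dim HH_i(\Lambda)=n$ for $i>0$ to get $\dim\mathrm{Im}(B_i)=n$ for $i$ even and $0$ for $i$ odd, and then add $\dim HC_i(KQ_0)$. No substantive difference.
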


\begin{proof} Put $B^i:=
Im(HH_i(\Lambda)\stackrel{B}{\longrightarrow}HH_{i+1}(\Lambda))$
where $B$ is Connes' map. From the previous theorem, we have

$$dim(B^0)=dimHH_0(\Lambda) - dim(KQ_0)= 2n-n= n$$ and

$$dim(B^i)= dim HH_i(\Lambda)- dim(B^{i-1})= n-dim(B^{i-1})$$, for
all $i>0$.

It follows that $dim(B^i)=n$, when $i$ is even and zero otherwise.

Then we have $$dimHC_i(\Lambda)-
dimHC_i(KQ_0)=\left\{\begin{array}{cc}

    n\hspace{0.25cm} \text{if i is even} \\

    0\hspace{0.25cm} \text{if is odd} \\

    \end{array}\right.$$ From this the result follows using the
    foregoing proposition.

\end{proof}

\begin{rem} \rm
In \cite{Eu}[Section 7.5] the author calculates the reduced cyclic
homology spaces $\overline{HC}_i(\Lambda )$ using Connes' sequence
(see Proposition \ref{prop.cyclic homology}(2)) and, as a byproduct,
he also calculates the absolute cyclic homology spaces. However, he
states that the equality $HC_i(\Lambda )=\overline{HC}_i(\Lambda )$
holds, for all $i>0$. This is not true since
$\overline{HC}_i(\Lambda )=\frac{HC_i(\Lambda )}{HC_i(\Lambda_0)}$,
for all $i>0$. Therefore the description of the $HC_i(\Lambda )$ in
\cite{Eu}[p. 22] is not correct.
\end{rem}

\begin{rem} \rm Due to the fact that $\Lambda$ is a $\Lambda^e$-
$Z(\Lambda)$-bimodule, for each $\Lambda$-bimodule $M$, the
$K$-vector space $Hom_{\Lambda^e}(M,\Lambda)$ inherites a structure
of $Z(\Lambda)$-module. In particular, via the isomorphisms,

$$\oplus_{i\in Q_0} e_i\Lambda e_i\stackrel{\sim}{\longrightarrow} Hom_{\Lambda^e}(P, \Lambda)$$

$$\oplus_{a\in Q_1}e_{i(a)}\Lambda e_{t(a)}\stackrel{\sim}{\longrightarrow}Hom_{\Lambda^e}(Q, \Lambda)$$both $\oplus_{i\in Q_0}e_i\Lambda e_i$ and $\oplus_{a\in
Q_1}e_{i(a)}\Lambda e_{t(a)}$ have a structure of
$Z(\Lambda)$-modules. It is routinary to see that these structures
are given by the multiplication in $\Lambda$ and that the
differentials of the complex $V^{\bullet}$ in Proposition
\ref{cohomology complex} are all morphisms of $Z(\Lambda)$-modules.

\end{rem}

\begin{lema}\label{HH^i as $Z(Lambda)-module$}We view $Soc(\Lambda)$ as an ideal of $Z(\Lambda)$. The
following assertions hold.

\begin{enumerate}[1)]

    \item $Soc(\Lambda)HH^j(\Lambda)=0$ for all $j>0$.

    \item $HH^j(\Lambda)$ is a semisimple $Z(\Lambda)$-module for
    all $j\equiv 2,3$ $(\text{mod}6)$

    \item $HH^j(\Lambda)$ is isomorphic to
    $\frac{Z(\Lambda)}{Soc(\Lambda)}$ as a $Z(\Lambda)$-module for
    all $j>0$, $j\not\equiv 2, 3$ $(\text{mod}6)$

\end{enumerate}

\end{lema}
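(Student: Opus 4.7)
My plan is to combine the periodicity isomorphism $HH^{j+6}(\Lambda)\cong HH^{j}(\Lambda)$ coming from Proposition \ref{localization} with the Eu--Schedler dualities of Theorem \ref{Eu-Schedler} so as to reduce the three assertions to the six representatives $j=1,2,3,4,5,6$, and to deduce assertion (1) uniformly for all $j>0$.

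I would prove (1) as a general fact: for $j>0$ the canonical map $\lambda_{\Lambda,\Lambda}\colon HH^{j}(\Lambda)\to\underline{HH}^{j}(\Lambda)$ is an isomorphism, and by Proposition \ref{graded module Ext(M,N)} together with the identification $\mathcal{P}(\Lambda,\Lambda)=Soc(\Lambda)$ proved in the corollary following Proposition \ref{cohomology complex}, the $Z(\Lambda)=HH^{0}(\Lambda)$-action on $HH^{j}(\Lambda)$ factors through $\underline{HH}^{0}(\Lambda)=Z(\Lambda)/Soc(\Lambda)\cong K[x_0]/(x_0^n)$. Hence $Soc(\Lambda)\cdot HH^{j}(\Lambda)=0$, and by Theorem \ref{dimension of HH^i} each $HH^{j}(\Lambda)$ (for $j>0$) is automatically a $K[x_0]/(x_0^n)$-module of dimension $n$.

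I would then read off the residues $j=2$ and $j=6$ directly from the complex $V^{\bullet}$ of Proposition \ref{cohomology complex} combined with Lemma \ref{Image of RStar and RtauStar}: one gets $HH^{2}(\Lambda)\cong\bigoplus_{i\in Q_0}(e_i\Lambda e_i/e_iJe_i)\cong KQ_0$, which is semisimple because the radical of $Z(\Lambda)$ sends $V^{2}$ into $\bigoplus e_iJe_i=Im(R^{*})$; while $HH^{6}(\Lambda)=Ker(\delta^{*})/Im(k_\tau^{*})=Z(\Lambda)/Soc(\Lambda)$ with the multiplicative $Z(\Lambda)$-action. The companion residues $j=3$ and $j=5$ would then follow by translating Theorem \ref{Eu-Schedler}(3), through the periodicity $s=6$ and Calabi--Yau dimension $m=5$, into the $\underline{HH}^{*}(\Lambda)$-module isomorphisms $HH^{3}\cong D(HH^{2})$ and $HH^{5}\cong D(\underline{HH}^{0})$; these are automatically $Z(\Lambda)$-module isomorphisms because the action factors through $\underline{HH}^{0}(\Lambda)$. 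Therefore $HH^{3}\cong D(KQ_0)\cong KQ_0$ (dualization preserves semisimplicity over the commutative ring $Z(\Lambda)$), and $HH^{5}\cong D(Z(\Lambda)/Soc(\Lambda))\cong Z(\Lambda)/Soc(\Lambda)$, the last isomorphism because $K[x_0]/(x_0^n)$ is a local Frobenius algebra whose regular module is self-dual.

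The main obstacle is the case $j=1$, where the dimension count alone does not distinguish the cyclic module $K[x_0]/(x_0^n)$ from other $K[x_0]/(x_0^n)$-modules of dimension $n$. My strategy is to exhibit an explicit $\Lambda$-degree-one cocycle $\xi\in V^{1}$ and verify that $x_{0}^{n-1}\cdot[\xi]\neq 0$ in $HH^{1}(\Lambda)$; this forces $HH^{1}(\Lambda)$ to be cyclic, hence isomorphic to $Z(\Lambda)/Soc(\Lambda)$. Concretely I would take $\xi:=\epsilon+\sum_{i=1}^{n-1}(a_{i}+\bar{a}_{i})$, which a direct application of the formula for $R^{*}$ in Proposition \ref{cohomology complex}, combined with the preprojective relations $a_{1}\bar{a}_{1}=-\epsilon^{2}$ and $a_{i}\bar{a}_{i}=-\bar{a}_{i-1}a_{i-1}$, places in $Ker(R^{*})$. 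Since $x_{0}$ restricts to $\epsilon^{2}$ on $e_{1}\Lambda e_{1}$ whereas $x_{0}^{n-1}a_{i}=x_{0}^{n-1}\bar{a}_{i}=0$ for every $i\geq 1$ (their $\Lambda$-degree would be $2n-1$, exceeding the upper bound $2n-2$ for $e_{i}\Lambda e_{i+1}$ and $e_{i+1}\Lambda e_{i}$ coming from the basis of Proposition \ref{prop.fixed basis to work}), one obtains $x_{0}^{n-1}\xi=\omega_{1}$, supported entirely in the $\epsilon$-component. Finally $V^{0}_{2n-2}=K\epsilon^{2n-2}$ and $\delta^{*}(\epsilon^{2n-2})=0$ (the would-be contributions $\bar{a}_{1}\epsilon^{2n-2}$ and $\epsilon^{2n-2}a_{1}$ vanish in $\Lambda$ for the same degree reason), so $\omega_{1}\notin Im(\delta^{*})$ and consequently $x_{0}^{n-1}[\xi]=[\omega_{1}]\neq 0$ in $HH^{1}(\Lambda)$. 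The remaining case $HH^{4}(\Lambda)\cong D(HH^{1}(\Lambda))\cong Z(\Lambda)/Soc(\Lambda)$ then follows from Theorem \ref{Eu-Schedler}(3) exactly as for $HH^{5}(\Lambda)$.
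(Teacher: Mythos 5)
Your proof is correct. Assertion (1) and the residues $j=1,2,6$ are handled essentially as in the paper: the identification $Soc(\Lambda)=\mathcal{P}(\Lambda,\Lambda)=Ker(\lambda_{\Lambda,\Lambda})$ combined with $\lambda$ being a ring map that is bijective in positive degrees is exactly the paper's (terser) argument for (1), and your cocycle $\xi=\sum_{a\in Q_1}a$ is the paper's $\widehat{y}$, with the same mechanism (the $\epsilon$-component of $x_0^{n-1}\widehat{y}$ equals $\omega_1$, while $\delta^*$ kills $V^0_{2n-2}=K\epsilon^{2n-2}$, so this class survives). Where you genuinely diverge is in degrees $3$, $4$ and $5$: the paper constructs explicit representatives ($t_j=\omega_j$, $\gamma=e_1+Im(\delta_\tau^*)$, and $y\gamma$ represented by $\epsilon$) and repeats the ``look at a distinguished component'' argument for $\delta_\tau^*$ and $R_\tau^*$ in each case, whereas you invoke the graded $\underline{HH}^*(\Lambda)$-module isomorphism $D(\underline{HH}^*(\Lambda))\cong\underline{HH}^*(\Lambda)[5]$ of Corollary \ref{structure of homology as graded module}, note that the $Z(\Lambda)$-action factors through $\underline{HH}^0(\Lambda)=Z(\Lambda)/Soc(\Lambda)$ so these restrict degreewise to $Z(\Lambda)$-module isomorphisms, and finish with the self-duality of the local Frobenius algebra $K[x_0]/(x_0^n)$ and the fact that duals of semisimple modules are semisimple. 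This is a legitimate and shorter route; what the paper's longer computation buys is the explicit generators $\gamma\in HH^4(\Lambda)$ and $y\gamma\in HH^5(\Lambda)$ together with the identification $x_0^{n-1}y\gamma=\omega_1+Im(R_\tau^*)$, all of which are needed verbatim in Proposition \ref{K-basis for HH^*Lambda1} and in the product computations of Section 5, so with your approach those representatives would still have to be produced separately before the ring structure can be determined.
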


\begin{proof}

\begin{enumerate}[1)]

    \item Is a direct consequence of the fact that $\mathcal{P}(\Lambda, \Lambda)=
    Z(\Lambda)$ and $HH^j(\Lambda)\cong \underline{Hom}_{\Lambda^e}(\Omega_{\Lambda^e}^j
    (\Lambda), \Lambda)$ for all $j>0$.

    \item If $x_0= \sum_{i=0}^{n-1}(-1)^ia_i\bar{a}_i$ as in lemma
    \ref{Center of lambda}, then $x_0HH^j(\Lambda)= x_0 \cdot (\oplus \frac{e_i\Lambda
    e_i}{e_iJe_i})=0$ when $j\equiv 2$ $(mod6)$ and $x_0 HH^j(\Lambda)= x_0\cdot
    Soc(\Lambda)=0$ when $j\equiv 3$ $(mod 6)$.

    \item We clearly have an isomorphism $HH^j(\Lambda)\cong
    HH^{j+6}(\Lambda)$ for all $j>0$, so we only need to prove the
    claim for $j=1, 4, 5, 6$.

    For $j=6$, we take $h= 1+ Im(k_{\tau}^*)\in \frac{Ker(\delta^*)}{Im(k_{\tau}^*)}=
    \frac{Z(\Lambda)}{Soc(\Lambda)}$ and one clearly has that $Z(\Lambda)h= \frac{Z(\Lambda)}{Soc(\Lambda)}= HH^6(\Lambda)$

    For $j=1$ we take the element $\widehat{y}= \sum_{a\in Q_1}a\in \oplus_{a\in Q_1}e_{i(a)}\Lambda
    e_{t(a)}$. One routinary sees that $R^*(\widehat{y})=0$. We then
    get an element $y= \widehat{y} + Im(\delta^*)\in HH^1(\Lambda)=
    \frac{Ker(R^*)}{Im(\delta^*)}$.

    We now take the induced morphism of $Z(\Lambda)$-modules

    $$\frac{K[x_0]}{(x_0)^n}\cong \frac{Z(\Lambda)}{Soc(\Lambda)}\longrightarrow Z(\Lambda)y$$

    \hspace{6.9cm} $\bar{x}$ \hspace{0.5cm}$\rightsquigarrow$ \hspace{0.4cm} $xy$

Its kernel is an ideal of $\frac{K[x_0]}{(x_0^n)}$, then it is of
the form $\frac{(x_0^k)}{(x_0^n)}$, for some $k\leq n$.

We claim that if $k<n$ then $x_0^ky\neq 0$. That will imply that
$\frac{K[x_0]}{(x_0^n)}\cong Z(\Lambda)y$ so that $Z(\Lambda)y=
HH^1(\Lambda)$ by a dimension argument.

Suppose that $k<n$ and $yx_0^k=0$. Then $\widehat{y}x_0^k\in
Im(\delta^*)$. Since $\delta^*$ is a graded map of degree $1$ (with
respect to length degree) we will have an element $x=
\sum_{i=1}^{n-k}\mu_ib_i^{2k}$ of length-degree $2k$ in
$\oplus_{i\in Q_0}e_i\Lambda e_i$ such that $\delta^*(x)=
\widehat{y}x_0^k$. Since this element belongs to $\oplus_{a\in
Q_1}e_{i(a)}\Lambda e_{t(a)}$ we can look at its
$\epsilon$-component:

$$\delta^*(x)_{\epsilon}= \epsilon x - \epsilon x= \mu_1\epsilon
b_1^{2k} - \mu_1b_1^{2k}\epsilon=0$$

$$(\widehat{y}x_0^k)_{\epsilon}= \lambda_1 \epsilon
b_1^{2k},$$ where $\lambda_1$ is the coefficient of $b_i^{2k}$ in
the expression $x_0^k= \sum_{i=1}^{n-k}\lambda_i b_i^{2k}$. We know
from the proof of lemma \ref{Center of lambda} that $\lambda_1\neq
0$, which gives a contradiction since $\epsilon b_1^{2k}=
\epsilon^{2k+1}\neq 0$.

For $j=4$ we note that $R_{\tau}^*(e_1)= e_1\epsilon - \epsilon
e_1=0$ and that $\overline{\delta_{\tau}^*(c)}= \delta_{\tau}^*(c)$,
which implies that $\overline{\delta_{\tau}^*(x)}=
\delta_{\tau}^*(x)$ for $x\in \oplus_{i\in Q_0}e_i \Lambda e_i$. We
argue as in the previous paragraph and prove that if $x_0^k e_1\in
Im(\delta_{\tau}^*)$ and $k\leq n$ then $k=n$.

If $x_0^k e_1= \epsilon^{2k}\in Im(\delta_{\tau}^*)$ then there is
$1\leq r\leq k$ and $\mu_1, \dots, \mu_r\in K$, with $\mu_r\neq 0$,
such that $\delta_{\tau}^*(\sum_{i=1}^{r}\mu_i b_i^{2k-1})=
\epsilon^{2k}$. We look then at the $a_r$-component of both members
of the equality (i.e. at their iamge by applying the projection
$\oplus_{a\in Q_1}e_{i(a)}\Lambda e_{t(a)}\longrightarrow
e_{i(a_r)}\Lambda e_{t(a_r)}$). We then get $b_r^{2k-1}a_r=0$, which
is only possible in case $r=n$, and hence $k=n$.

As in the previous paragraph, we conclude that $HH^4(\Lambda)=
\frac{Ker(R_{\tau}^*)}{Im(\delta_{\tau}^*)}= Z(\Lambda)\gamma$,
where $\gamma:= e_1 + Im(\delta_{\tau}^*)$.

It only remains the case $j=5$. In this case we consider the element
$y\gamma\in HH^1(\Lambda)\cdot HH^4(\Lambda)\subseteq
HH^5(\Lambda)$. Via the isomorphism $\oplus_{a\in
Q_1}e_{i(a)}\Lambda
e_{t(a)}\stackrel{\sim}{\longrightarrow}Hom_{\Lambda^e}(\oplus_{a\in
Q_1}\Lambda e_{i(a)}\otimes e_{t(a)}, \Lambda)$, the ele\-ment $e_1$
is identified with the morphism of $\Lambda$-bimodules $\oplus_{a\in
Q_1}\Lambda e_{i(a)}\otimes
e_{t(a)}\Lambda\stackrel{\tilde{\gamma}}{\longrightarrow}\Lambda$
ma\-pping $e_{i(\epsilon)}\otimes e_{t(\epsilon)}\rightsquigarrow
e_1$ and $e_{i(a)}\otimes e_{t(a)}\rightsquigarrow 0$, for $a\neq
\epsilon$. It obviously lifts to the morphism $\oplus_{a\in
Q_1}\Lambda e_{i(a)}\otimes e_{t(a)}\Lambda
\stackrel{f}{\longrightarrow}\oplus_{i\in Q_0}\Lambda e_i\otimes
e_i\Lambda$ taking $e_{i(\epsilon)}\otimes
e_{t(\epsilon)}\rightsquigarrow e_1\otimes e_1$ and $e_{i(a)}\otimes
e_{t(a)}\rightsquigarrow 0$, for $a \neq\epsilon$. It is then
routinary to see that we have a commutative diagram

$$\begin{large}\xymatrix{\oplus_{i\in Q_0}\Lambda e_i \otimes e_i\Lambda
\ar[r]^{R_{\tau}} \ar[d]^{g} & \oplus_{a\in Q_1}\Lambda
e_{i(a)}\otimes e_{t(a)}\Lambda\ar[d]^{f}
\\ \oplus_{a\in Q_1}\Lambda e_{i(a)}\otimes e_{t(a)}\Lambda \ar[r]_{\delta} &
\oplus_{i\in Q_0} \Lambda e_i\otimes e_i\Lambda} \end{large}$$

where $g(e_1\otimes e_1)= e_{i(\epsilon)}\otimes e_{t(\epsilon)}$
and $g(e_i\otimes e_i)=0$ for $i\neq 1$.

On the other hand, via the isomorphism $\oplus_{a\in
Q_1}e_{i(a)}\Lambda
e_{t(a)}\stackrel{\sim}{\longrightarrow}Hom_{\Lambda^e}(\oplus_{a\in
Q_1}\Lambda e_{i(a)}\otimes e_{t(a)}\Lambda, \Lambda)$ the element
$\widehat{y}$ gets identified with the morphism of $\Lambda$-modules
$\oplus_{a\in Q_1}\Lambda e_{i(a)}\otimes e_{t(a)}\Lambda
\stackrel{\tilde{y}}{\longrightarrow}\Lambda$ such that
$\tilde{y}(e_{i(a)}\otimes e_{t(a)})=a$ $\forall a\in Q_1$. By
definition of the Yoneda product in $HH^*(\Lambda)$, the element
$y\gamma$ is represented by the morphism of $\Lambda$-bimodules
$\oplus_{i\in Q_0}\Lambda e_i\otimes
e_i\Lambda\longrightarrow\Lambda$ which takes $e_1\otimes e_1
\rightsquigarrow \epsilon$ and $e_i\otimes e_i \rightsquigarrow 0$,
for $i\neq 1$. Via the isomorphism $\oplus_{i\in Q_0}e_i \Lambda e_i
\stackrel{\sim}{\longrightarrow} Hom_{\Lambda^e}(\oplus_{i\in Q_0}
\Lambda e_i \otimes e_i \Lambda, \Lambda)$ $\tilde{y}\circ g$
corresponds to $\epsilon\in e_1\Lambda e_1\subseteq \oplus_{i\in
Q_0}e_i\Lambda e_i$.

We apply the argument already used for the cases $j=1$ and $j=4$ and
prove that if $x_0^k\epsilon \in Im(R_{\tau}^*)$ and $1\leq k\leq n$
then $k=n$.

That follows from the proof of lemma \ref{Image of RStar and
RtauStar}. Indeed, with the same notation, the matrix of the map
$R_{\tau}^*: V_{2m-2}^+ \longrightarrow W_{2m-1}$, with respect to
the bases of $V_{2m-2}^+$ and $W_{2m-1}$ given there is of the
form

$$\left(\begin{tabular}{ccccc}

            0 & $(-1)^{s_1}$ & 0 & $\ldots$ & 0 \\

            0 & -1 & $(-1)^{s_2}$ &  $\cdots$ & $\vdots$ \\

            0 & 0 & -1  & $$ &  0\\

            $\cdots$ & $\cdots$ & $\cdots$ & $\ddots$ & $(-1)^{s_m-1}$ \\

            0 & 0 & 0 & $\cdots$ & $(-1)^q$

       \end{tabular}\right)$$

which implies that no nonzero element of the form
$\left(\begin{tabular}{c} * \\ 0 \\ 0 \\ $\vdots$ \\ 0
\end{tabular}\right)$ can be in $Im(R_{\tau}^*)$. Therefore $x_0^k
\epsilon= \epsilon^{2k+1}\in Im(R_{\tau}^*)$ implies
$\epsilon^{2k+1}=0$ in $\Lambda$ and, hence, that $k=n$.

\end{enumerate}

\end{proof}

If
$$...P^{-i}\stackrel{d^{-i}}{\longrightarrow}P^{-i+1}\longrightarrow
...\longrightarrow
P^{-1}\stackrel{d^{-1}}{\longrightarrow}P^0\stackrel{u}{\longrightarrow}\Lambda\rightarrow
0$$ is the minimal projective resolution of $\Lambda$ (see
proposition \ref{projetive resolution}) then, by definition, we have
$HH^i(\Lambda
)=\frac{Ker((d^{-i-1})^*)}{Im((d^{-i})^*)}\subseteq\frac{(P^{-i})^*}{Im((d^{-i})^*)}$
for each $i>0$. Thus any element $\eta\in HH^i(\Lambda )$ is of the
form $\eta =\tilde{\eta}+Im((d^{-i})^*)$, for some $\tilde{\eta}\in
Hom_{\Lambda^e}(P^{-i},\Lambda )$ such that $\tilde{\eta}\circ
d^{-i-1}=0$. We will say that $\tilde{\eta}$ \emph{represents}
$\eta$ or that $\eta$ \emph{is represented by} $\tilde{\eta}$. The
following is a straightforward consequence of the results of this
section and their proofs.

\begin{prop}\label{K-basis for HH^*Lambda1} The following are bases
of the $HH^i(\Lambda)$, for $i=0,1,...,6$:

\begin{enumerate}

    \item For $HH^0(\Lambda)=Z(\Lambda )$: \hspace*{0.5cm} $\{x_0, x_0^2, \dots, x_0^{n-1}, x_1, \dots,
    x_n\}$, where $x_0= \sum_{i=1}^{n-1}(-1)^i a_i\bar{a}_i $ and $x_i=
    \omega_i$ for each $i=1, \dots n$.

    \item For $HH^1(\Lambda)=\frac{Ker(R^*)}{Im(\delta^*)}$: \hspace*{0.5cm} $\{y, x_0y, x_0^2 y, \dots
    x_0^{n-1}y\}$, where $y=\sum_{a\in Q_1}a+Im(\delta^*)$.

    The
    element $y$ is represented by the only morphism $\tilde{y}: Q\longrightarrow
    \Lambda$ such that  $\tilde{y}(e_{i(a)}\otimes e_{t(a)})=a$ for each $a\in
    Q_1$.

    \item For $HH^2(\Lambda )=\frac{Ker(k^*)}{Im(R^*)}$:
    \hspace*{0.5cm} $\{z_1, \dots, z_n\}$, where $z_k=e_k+Im(R^*)$
    for each $k\in Q_0$.

    The element $z_k$ is represented by the only morphism $\tilde{z_k}:P\longrightarrow
    \Lambda$ such that $\tilde{z}_k(e_i\otimes e_i)=\delta_{ik}e_k$.

    \item For $HH^3(\Lambda )=\frac{Ker(\delta_\tau^*)}{Im(k^*)}=Ker(\delta_\tau^*)$:
    \hspace*{0.5cm} $\{t_1, \dots, t_n\}$, where $t_k=w_k$ for each
    $k\in Q_0$.

    The element $t_k$ is represented by the only morphism $\tilde{t}_k: P\longrightarrow
    \Lambda$ such that $\tilde{t}_k(e_i\otimes e_i)=
    \delta_{ki}\omega_k$.

    \item For $HH^4(\Lambda )=\frac{Ker(R_\tau^*)}{Im(\delta_\tau^*)}$:
    \hspace*{0.5cm} $\{\gamma, x_0\gamma, \dots,
    x_0^{n-1}\gamma\}$, where $\gamma =e_1+Im(\delta_\tau^*)$.

    The element $\gamma$ is represented by the only morphism $\tilde{\gamma}: Q\longrightarrow
    \Lambda$ such that $\tilde{\gamma}(e_{i(a)}\otimes e_{t(a)})=\delta_{\epsilon
    a}e_1$, for each $a\in Q_1$.

    \item For $HH^5(\Lambda )=\frac{Ker(k_\tau^*)}{Im(R_\tau^*)}$:
    \hspace*{0.5cm} $\{y\gamma, x_0y\gamma, \dots
    x_0^{n-1}y\gamma\}$.

    \item For $HH^6(\Lambda )=\frac{Ker(\delta^*)}{Im(k_\tau^*)}$:
    \hspace*{0.5cm} $\{h, x_0h, \dots,
    x_0^{n-1}h\}$, where $h=1+Im(k_\tau^*)$.

    The element $h$ is represented by the multiplication map $\tilde{h}=u:\oplus_{i\in Q_0}\Lambda e_i\otimes
    e_i\Lambda\longrightarrow\Lambda$.

\end{enumerate}

\end{prop}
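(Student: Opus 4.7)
The plan is to treat each of the seven cases $i=0,1,\dots,6$ in turn, exploiting the dimension count $\dim HH^i(\Lambda)=n$ for $i>0$ from Theorem \ref{dimension of HH^i} together with the $Z(\Lambda)$-module description in Lemma \ref{HH^i as $Z(Lambda)-module$}. The case $i=0$ is just a restatement of Lemma \ref{Center of lambda}. For the three cases $i=1,4,6$, where $j\not\equiv 2,3\pmod 6$, Lemma \ref{HH^i as $Z(Lambda)-module$}(3) says that $HH^i(\Lambda)$ is cyclic as a $Z(\Lambda)$-module and isomorphic to $Z(\Lambda)/Soc(\Lambda)\cong K[x_0]/(x_0^n)$; the proof of that lemma already produces the generators $y$, $\gamma$, $h$ explicitly, so the sets $\{x_0^j y\}$, $\{x_0^j\gamma\}$, $\{x_0^j h\}$ with $0\le j\le n-1$ are automatically bases.

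For $i=2$, combining $k^*=0$ (Proposition \ref{cohomology complex}(c)) with $\mathrm{Im}(R^*)=\oplus_{i\in Q_0}e_iJe_i$ (Lemma \ref{Image of RStar and RtauStar}) gives
\[
HH^2(\Lambda)=\frac{\oplus_{i\in Q_0}e_i\Lambda e_i}{\oplus_{i\in Q_0}e_iJe_i}\cong\bigoplus_{i\in Q_0}\frac{e_i\Lambda e_i}{e_iJe_i},
\]
so the classes $z_k:=e_k+\mathrm{Im}(R^*)$ form a basis. For $i=3$, since $k^*=0$ we have $HH^3(\Lambda)=\mathrm{Ker}(\delta_\tau^*)$; each socle element $\omega_k\in e_k\Lambda e_k$ is annihilated on both sides by every arrow, so the formula for $\delta_\tau^*$ in Proposition \ref{cohomology complex}(d) gives $\delta_\tau^*(\omega_k)=0$, and the $n$ linearly independent elements $t_k=\omega_k$ span by the dimension count. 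For $i=5$, the case $j=5$ of the proof of Lemma \ref{HH^i as $Z(Lambda)-module$}(3) has already identified $y\gamma$ as a generator of $HH^5(\Lambda)$ as a $Z(\Lambda)$-module and shown it is not annihilated by $x_0^{n-1}$, so $\{x_0^j y\gamma:\,0\le j\le n-1\}$ is a basis.

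It remains to verify that the indicated chain-level morphisms $\widetilde{y},\widetilde{z}_k,\widetilde{t}_k,\widetilde{\gamma},\widetilde{h}$ represent the claimed classes. This is immediate from the canonical isomorphisms
\[
\bigoplus_{i\in Q_0}e_i\Lambda e_i\xrightarrow{\sim}\mathrm{Hom}_{\Lambda^e}(P,\Lambda),\qquad\bigoplus_{a\in Q_1}e_{i(a)}\Lambda e_{t(a)}\xrightarrow{\sim}\mathrm{Hom}_{\Lambda^e}(Q,\Lambda)
\]
spelled out in the proof of Proposition \ref{cohomology complex}: under these, the cycle $e_k$ (resp.\ $\omega_k$, $\epsilon$, $1$, etc.) on the left corresponds exactly to the morphism described on the right.

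The main (and really the only) nontrivial point is the computation of the Yoneda product $y\gamma$ needed for the $HH^5$ basis: one must lift $\widetilde{\gamma}:Q\to\Lambda$ through $d^{-2}=R_\tau$ to a morphism $f:Q\to P$, and then observe that $\widetilde{y}\circ g$, where $g:P\to Q$ is the induced lifting of degree shift, evaluates to $\epsilon$ on $e_1\otimes e_1$ and $0$ elsewhere; this is carried out in the proof of Lemma \ref{HH^i as $Z(Lambda)-module$}(3), and I would simply invoke it.
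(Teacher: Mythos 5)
Your proposal is correct and follows exactly the route the paper intends: the paper states the proposition as "a straightforward consequence of the results of this section and their proofs," and you assemble precisely those ingredients (Lemma \ref{Center of lambda} for degree $0$, the dimension count of Theorem \ref{dimension of HH^i}, the cyclic $Z(\Lambda)$-module structure and explicit generators $y,\gamma,h,y\gamma$ from the proof of Lemma \ref{HH^i as $Z(Lambda)-module$}, the equality $Im(R^*)=\oplus_{i}e_iJe_i$ and $k^*=0$ for degrees $2$ and $3$, and the canonical identifications from the proof of Proposition \ref{cohomology complex} for the chain-level representatives). No gaps.
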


The bases of the $HH^i(\Lambda )$ given in the above proposition
will be called \emph{canonical bases}.

\begin{rem} \rm
In \cite{Eu} the author uses the length grading on $\Lambda$ and
looks at the minimal projective resolution of $\Lambda$ as one in
the category of graded  $\Lambda$-bimodules. With that in mind the
Hochschild homology and cohomology spaces  become graded vector
spaces. Then he calculates this graded structure in terms of three
seminal graded vector spaces $R$, $U$ and $K$ (see Theorems 4.0.13
and  4.0.14  in \cite{Eu}). In our terminology, $R=KQ_0$
(concentrated in degree $0$), $U=\frac{Z(\Lambda )}{Soc (\Lambda
)}[2]$ (with the length grading on $\frac{Z(\Lambda )}{Soc (\Lambda
)}$) and $K=HH^2(\Lambda )[2]$ (which is concentrated in degree $0$
since $HH^2(\Lambda )$ is concentrated in degree $-2$).

His strategy to prove the mentioned theorems  is based on the use of
Connes' exact sequence (see Proposition \ref{prop.cyclic
homology}(2)) and the fact that the differentials in this sequence
are graded maps. Then he is able to describe the graded structure of
each $HH_i(\Lambda )$ and, using dualities between the Hochschild
homology and cohomology graded spaces obtained in \cite{ES}, the
author also gets the graded structure of each $HH^i(\Lambda )$.

Due to the fact that the dimension of $R$, $U$ and $K$ is $n$, the
dimensions of the $HH_i(\Lambda )$ and the $HH^i(\Lambda )$ can be
read off from the mentioned theorems 4.0.13 and 4.0.14 of \cite{Eu},
even if they were not explicitly stated in a proposition or
corollary. After that and before calculating the ring structure of
$HH^*(\Lambda )$, Eu gives explicit bases of the $HH^i(\Lambda )$
using the correspondent of our  complex $V^\bullet$, which he
considers as a complex of graded vector spaces (see section 8 in
\cite{Eu}).

In our case, we have not used the graded condition of the minimal
projective resolution of $\Lambda$. Instead, we have  calculated the
dimensions of the $HH^i(\Lambda )$ by directly manipulating the
complex $V^\bullet$ and using the isomorphisms of Corollary
\ref{structure of homology as graded module}. The bases of the
$HH^i(\Lambda )$ have been obtained in the process of identifying
the structure of these spaces as $Z(\Lambda )$-modules, using the
process to calculate already some of the products in $HH^*(\Lambda
)$ (see the proof of Lemma \ref{HH^i as $Z(Lambda)-module$}).
\end{rem}

\section{The ring structure of $HH^*(\Lambda)$ and proof of the main \newline theorems}

We start by studying the map $\phi_y: HH^2(\Lambda)\longrightarrow
HH^3(\Lambda)$  given by $\phi_y(u)= yu$ for all $u\in
HH^2(\Lambda)$

\begin{lema}\label{HH*1xHH*2} If $C=(C_{kj})$ is the matrix of
$\phi_y$ with respect to the canonical bases of $HH^2(\Lambda)$ and
$HH^3(\Lambda)$, then the following conditions hold:

\begin{enumerate}[1)]

    \item $C$ is a symmetric integer matrix.

    \item $C_{jk}=(-1)^{k-j+1}(2j-1)(n-k+1)$ whenever $1\leq j\leq k\leq
    n$.

    \item $rank(C)=n$, when $Char(K)$ does not divide $2n+1$, and
    $rank(C)=1$, when $Char(K)$ divides $2n+1$.

\end{enumerate}

\end{lema}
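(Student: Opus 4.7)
The plan is to compute the Yoneda product $yz_k \in HH^3(\Lambda)$ explicitly at the chain level, express it in the canonical basis $\{t_1, \ldots, t_n\}$ of Proposition \ref{K-basis for HH^*Lambda1}, and then analyze the resulting matrix via an appropriate decomposition. The representative $\tilde{z}_k: P^{-2} \to \Lambda$ from Proposition \ref{K-basis for HH^*Lambda1} admits the obvious lift $\bar{z}_k: P^{-2} \to P^0$ sending $e_i \otimes e_i \mapsto \delta_{ik}(e_k \otimes e_k)$; using the formula $k(e_i \otimes e_i) = \sum_{x \in e_iB}(-1)^{deg(x)}\, x \otimes x^*$ from Proposition \ref{projetive resolution} and the identity $\bar{z}_k(x \otimes x^*) = \delta_{t(x),k}\, x \otimes x^*$, a further lift $\zeta_k: P^{-3} \to P^{-1} = Q$ must satisfy
\[
\delta\circ\zeta_k(e_i \otimes e_i) \;=\; \sum_{x \in e_iBe_k}(-1)^{deg(x)}\, x \otimes x^*.
\]
Using the explicit description of the sets $e_iBe_k$ from Corollary \ref{Basis properties} together with the preprojective relations, I would exhibit a concrete element $\zeta_k(e_i \otimes e_i) \in Q$ solving this equation and verify the identity by case analysis on the parity of $deg(x)$ and the relative positions of $i$ and $k$.

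Having $\zeta_k$, the class $yz_k \in HH^3(\Lambda)$ is represented by $\tilde{y}\circ\zeta_k : P^{-3} \to \Lambda$; under the isomorphism $Hom_{\Lambda^e}(P, \Lambda) \cong \oplus_j e_j\Lambda e_j$, its $j$-th component is a scalar multiple of $\omega_j = t_j$, and the scalar is exactly $C_{jk}$. Carrying out this computation and simplifying with the relations in Corollary \ref{Basis properties} yields $C_{jk} = (-1)^{k-j+1}(2j-1)(n-k+1)$ for $1 \leq j \leq k \leq n$, which is assertion 2. The symmetry in assertion 1 then follows because the parallel computation for $j > k$ produces the companion formula $C_{jk} = (-1)^{j-k+1}(2k-1)(n-j+1)$, precisely the transpose of the first; alternatively, symmetry can be extracted from graded commutativity of $HH^*(\Lambda)$ combined with a relation of the form $t_jz_k = \delta_{jk}\, x_0^{n-1}y\gamma$ to be established in the sequel.

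For assertion 3, let $\tilde{M}$ be the symmetric integer matrix with $\tilde{M}_{jk} = (2\min(j,k)-1)(n - \max(j,k)+1)$. With $D = \text{diag}((-1)^j)$, the explicit formula for $C$ gives $C = -D\tilde{M}D$, so $rank(C) = rank(\tilde{M})$. Setting $\alpha = (2j-1)_{j=1}^n$, $\beta = (n-k+1)_{k=1}^n$, and letting $N$ be the strictly lower-triangular integer matrix with $N_{jk} = j-k$ for $j > k$, a short calculation gives
\[
\tilde{M} \;=\; \alpha\beta^{T} - (2n+1)\, N.
\]
In characteristic dividing $2n+1$ this reduces $\tilde{M}$ to the rank-one matrix $\alpha\beta^{T}$, which is nonzero since $\alpha_1 = \beta_n = 1$, yielding $rank(C) = 1$. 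Otherwise, direct multiplication shows $\tilde{M}\cdot T = (2n+1)\,I$, where $T$ is the tridiagonal integer matrix with diagonal $(3, 2, \ldots, 2)$ and off-diagonal entries $-1$; a standard Chebyshev-type recursion gives $\det(T) = 2n+1$, so $\det(\tilde{M}) = (2n+1)^{n-1}$, which is nonzero when $Char(K) \nmid 2n+1$, giving $rank(C) = n$.

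The main obstacle is the explicit construction of $\zeta_k$ and the subsequent identification of the coefficients of $\tilde{y}\circ\zeta_k$: the combinatorial bookkeeping over paths in $e_iBe_k$, weighted by signs $(-1)^{deg(x)}$ and subject to the preprojective relations at the various vertices, must be done carefully to produce the precise integer coefficients $(2j-1)(n-k+1)$ and the sign $(-1)^{k-j+1}$. Once these formulas are in hand, the decomposition of $\tilde{M}$ and the resulting rank dichotomy are elementary linear algebra.
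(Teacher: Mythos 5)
Your architecture is exactly the paper's: lift $\tilde z_k$ to $\hat z_k:P\to P$, compose with $k$ to get $(\hat z_k\circ k)(e_j\otimes e_j)=\sum_{x\in e_jBe_k}(-1)^{deg(x)}x\otimes x^*$, find a $\delta$-preimage $\zeta_k$, and read off $C_{jk}$ from $\tilde y\circ\zeta_k$. But the entire content of assertions 1) and 2) lives in the step you defer: you never exhibit $\zeta_k$, and without it there is no way to see why the coefficient of $\omega_j$ comes out to $(-1)^{k-j+1}(2j-1)(n-k+1)$. The paper's device is the telescoping element: for a path $x=\alpha_1\cdots\alpha_r$ one sets
$$h_x=\alpha_1\cdots\alpha_{r-1}\otimes x^*+\alpha_1\cdots\alpha_{r-2}\otimes\alpha_rx^*+\cdots+e_j\otimes\alpha_2\cdots\alpha_rx^*,$$
checks $\delta(h_x)=x\otimes x^*-e_j\otimes\omega_j$, and uses the fact that $e_jBe_k$ has equally many even- and odd-degree elements to kill the $e_j\otimes\omega_j$ terms in the signed sum. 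Since $\tilde y(h_x)=deg(x)\,\omega_j$, this yields the closed form $C_{jk}=\sum_{x\in e_jBe_k}(-1)^{deg(x)}deg(x)$, which is then evaluated by summing the two arithmetic progressions of degrees listed in Corollary \ref{Basis properties}(1); symmetry is immediate because $x\mapsto\bar x$ is a degree-preserving bijection $e_jBe_k\to e_kBe_j$. Some such explicit construction is indispensable, so as written your argument for 1) and 2) is a plan rather than a proof. (Your fallback for symmetry via $z_kt_j=\delta_{jk}x_0^{n-1}y\gamma$ would work, but in the paper's logical order that relation is Lemma \ref{products involving HH^3}, proved afterwards.)

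Your treatment of assertion 3), by contrast, is complete, correct, and genuinely different from the paper's. The identity $\tilde M=\alpha\beta^{T}-(2n+1)N$ checks out (for $j>k$ one has $(2j-1)(n-k+1)-(2n+1)(j-k)=(2k-1)(n-j+1)$), it immediately gives rank $1$ when $Char(K)\mid 2n+1$ (the $(1,n)$ entry of $\alpha\beta^T$ is $1$), and the inverse relation $\tilde M\,T=(2n+1)I$ with $\det(T)=2n+1$ gives $\det(\tilde M)=(2n+1)^{n-1}$. This is cleaner than the paper's ad hoc row reduction of $C$ and recovers, essentially for free, the identity $-C(2I_n+D)=(2n+1)I_n$ that the paper only records in a remark after the lemma. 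So: keep the linear algebra, but you must actually carry out the chain-level computation of the $C_{jk}$.
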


\begin{proof} Let $x= \alpha_1\cdots \alpha_r$ ($r>0$) be any path
in $e_jKQ e_k$ which does not belong to the ideal $I$. We put

$$h_x= \alpha_1\cdots \alpha_{r-1}\otimes x^* + \alpha_1\cdots \alpha_{r-2}\otimes \alpha_r x^*
+\cdots + e_j\otimes \alpha_2\cdots \alpha_r x^*$$ which is an
element of $\oplus_{a\in Q_1}\Lambda e_{i(a)}\otimes
e_{t(a)}\Lambda$. In case $j=k$ we  put $h_{e_j}=0$ and if
$p_j=\bar{a}_{j-1}...\bar{a}_{1}\epsilon^{2(n-j)+1}a_1...a_{j-1}$ we
also define  $h_{w_j}=(-1)^{\frac{j(j-1)}{2}}h_{p_j}$ (recall that
$w_j=(-1)^{\frac{j(j-1)}{2}}p_j$). In this way we have defined $h_x$
for each $x\in e_jBe_k$ and for all $j,k\in Q_0$.

Direct calculation shows that $\delta(h_x)= x\otimes x^* -
e_j\otimes \omega_j$, and hence

$$\delta(\sum_{x\in e_jBe_k}(-1)^{deg(x)}h_x)= \sum_{x\in e_jBe_k}(-1)^{deg(x)}
(x\otimes x^*-e_j\otimes \omega_j)= \sum_{x\in
e_jBe_k}(-1)^{deg(x)}x\otimes x^*,$$ bearing in mind that in
$e_jBe_k$ there are exactly the same number of odd and even length
degree.

Now consider  $\tilde{z}_k:P\longrightarrow\Lambda$ as in
proposition \ref{K-basis for HH^*Lambda1}. It is clear that the
morphism of $\Lambda$-bimodules $\hat{z}_k:P\longrightarrow P$
determined by the rule $\hat{z}_k(e_i\otimes e_i)=
\delta_{ik}e_k\otimes e_k$ is a lifting of $\tilde{z}_k$ (i.e.
$\tilde{z}_k= u\circ \hat{z}_k$).

If now $f_k: P\longrightarrow Q$ is the morphism of
$\Lambda$-bimodules determined by the rule $f_k(e_j\otimes e_j)=
\sum_{x\in e_jBe_k}(-1)^{deg(x)}h_x$ then we have a commutative
diagram

$$\begin{large}\xymatrix{P\ar[r]^k \ar[d]_{f_k} & P\ar[d]^{\hat{z}_k}
\\ Q\ar[r]^{\delta} & P} \end{large}$$
and hence $yz_k$ is represented by the morphism

$$\tilde{y}\circ f_k: P\longrightarrow \Lambda\text{, }(e_j\otimes e_j\rightsquigarrow \sum_{x\in
e_jBe_k}(-1)^{deg(x)}deg(x)\omega_j).$$ That means that if we put
$C_{jk}= \sum_{x\in e_jBe_k}(-1)^{deg(x)}deg(x)$ for all $j, k\in
Q_0$, then we have $yz_k= \sum_{j\in Q_0} C_{jk}t_j$ (notation as in
proposition \ref{K-basis for HH^*Lambda1}). Therefore $C:=(C_{jk})$
is the matrix of $\phi_y: HH^2(\Lambda)\longrightarrow
HH^3(\Lambda)$ with respect to the canonicval bases of
$HH^2(\Lambda)$ and $HH^3(\Lambda)$.

That $C$ is a symmetric integer matrix is clear since the
anti-isomorphism $x\rightsquigarrow\bar{x}$ gives a bijection
between $e_jBe_k$ and $e_kBe_j$ which preserves the term
$(-1)^{deg(x)}deg(x)$. We then proceed to calculate the entries of
this matrix. To do that we should recall the possible degrees of
elements in $e_jBe_k$ (see Remark \ref{Basis properties}), for
$1\leq j\leq k\leq n$. There are two possibilities.

\begin{enumerate}[i)]

     \item $k\equiv j$ $(\text{mod}2)$: Then the sum of odd degrees is $\frac{[(k-j)+ (k-j)
     +2(n-k)](n-k+1)}{2}= (n-j)(n-k+1)$, while the sum of even
     degree is $\frac{[(k+j-1)+ (k+j-1)+2(n-k)](n-k+1)}{2}=
     (n+j-1)(n-k+1)$. Therefore we have $C_{jk}= (n-j)(n-k+1)- (n+j-1)(n-k+1)= (1-2j)(n-k+1)$

     \item $k\not\equiv j$ $(\text{mod}2)$: In this case $C_{jk}$ is the
     negative of the number above, i.e., $C_{jk}=(2j-1)(n-k+1)$.

\end{enumerate}

It finally remains to calculate $rank(C)$. We view each $n\times
n$ matrix as a $n$-tuple whose components are its rows. By
elementary row transformation one passes from $C=(C_1, \dots,
C_n)$ to
$$C'=(C_1, C_2+3C_1, \dots, C_j+(-1)^j(2j-1)C_1, \dots, C_n +
(-1)^n(2n-1)C_1)$$ so that $rank(C)= rank(C')$. We look at the
$j$-th row $C_j'= C_j + (-1)^j(2j-1)C_1$ of $C'$. It is
straightforward, and thus is left to the reader, to check that for
$j\leq k$, then $C_{jk}'=0$ and if $j>k$, $C_{jk}'=
(-1)^{j-k+1}(k-j)(2n+1)$.

Therefore, in case $Char(K)$ divides $2n+1$, all rows of $C'$ except
the first one are zero. On the other hand, we have $C_{1n}'= C_{1n}=
(-1)^{n-1+1}(2\cdot 1- 1)(n-n+1)=(-1)^n$. It follows that
$rank(C)=1$ in case $Char(K) / 2n+1$. In case $Char(K)$ does not
divide $2n+1$, if we apply the $n$-cycle $(1 \ n \ n-1 \cdots \ 2)$
to the rows of $C'$ we obtain a lower triangular matrix with
diagonal entries $C_{21}', C_{32}', \dots, C_{n,n-1}', C_{1n}'$.

We have $C_{k+1,k}'= (-1)^{(k+1)-k+1}(k-(k+1))(2n+1)= -(2n+1)$ for
$k=2, \dots, n$ and $C_{1n}'=(-1)^n$. It follows that $det(C)=
det(C')=(-1)^{2n-1}(2n+1)^{n-1}\neq 0$. Therefore $rank(C)=n$ in
this case.

\end{proof}

\begin{rem} \rm
Given a graph $\Gamma$ without
 double edges, its \emph{adjacency
matrix} $D=D_\Gamma$ is the symmetric matrix
$D=(d_{ij})_{i,j\in\Gamma_0}$ having $d_{ij}=1$, in case there is an
edge $i --- j$, and $d_{ij}=0$ otherwise. In particular, for the
graph $\mathbb{L}_n$, one  has $d_{11}=1$, $d_{i,i+1}=d_{i+1,i}=1$
for $i=1,...,n-1$, and $d_{ij}=0$ otherwise. Direct computation
shows that the matrix $C$ of Lemma \ref{HH*1xHH*2} satisfies the
equality \hspace*{0.5cm} $-C(2I_n+D)=(2n+1)I_n$, where $I_n$ is the
identity $n\times n$ matrix. Therefore, when $char (K)$ does not
divide $2n+1$, an alternative description of the matrix $C$ is
$C=-(2n+1)(2I_n+D)^{-1}$. Up to signs forced by the different
presentation of $\Lambda$ and the different choice of the
exceptional vertex of $\mathbb{L}_n$, the last equality is that of
\cite{Eu}[Proposition 9.3.1] (see also \cite{Eu}[Theorem 4.0.16]).

Taking into account also the case when $char(K)$ divides $2n+1$ is
fundamental for the difference of presentations in our two main
theorems  and is the part of our work where the arguments of
\cite{Eu} cannot be applied.
\end{rem}

\begin{lema} \label{HH^2 x HH^4} The equality

$$z_j\gamma= (-1)^{j}(n-j+1)x_0^{n-1}h$$ holds in the ring $HH^*(\Lambda)$, for each $j=1, 2, \dots, n$.

\end{lema}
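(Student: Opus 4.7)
The plan is to compute the Yoneda product $z_j\gamma\in HH^6(\Lambda)$ by lifting the representative $\tilde\gamma:Q\to\Lambda$ of $\gamma$ one further step along the minimal projective resolution of Proposition \ref{projetive resolution}. Concretely, $z_j\gamma$ will be represented by $\tilde z_j\circ\gamma^{-2}$, where $\gamma^{-2}:P^{-6}=P\to P^{-2}=P$ is a bimodule map satisfying $R\circ\gamma^{-2}=\gamma^{-1}\circ k_\tau$. The two previous stages $\gamma^0:Q\to P$ (sending $e_{i(\epsilon)}\otimes e_{t(\epsilon)}$ to $e_1\otimes e_1$ and killing the remaining arrow-components) and $\gamma^{-1}=g:P\to Q$ (sending $e_1\otimes e_1$ to the generator of the $\epsilon$-component and killing the other vertex-components) were already constructed inside the proof of case $j=5$ of Lemma \ref{HH^i as $Z(Lambda)-module$}, so only $\gamma^{-2}$ remains to be produced.

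First I would use the explicit basis of Proposition \ref{prop.fixed basis to work} to compute $g\circ k_\tau(e_i\otimes e_i)$: since $g$ kills every arrow-component of $Q$ except $\epsilon$, only the elements of $e_iBe_1$ survive, and a short calculation with their duals gives
\[
g\circ k_\tau(e_i\otimes e_i)=-\sigma_i\,\bar a_{i-1}\cdots\bar a_1\cdot\Bigl(\sum_{k=0}^{2(n-i)+1}\epsilon^k\otimes\epsilon^{2(n-i)+1-k}\Bigr)\cdot a_1\cdots a_{i-1}
\]
in the $\epsilon$-component of $Q$, with $\sigma_i=(-1)^{i(i-1)/2}$ and $\sigma_1=1$. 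I would then search for $\gamma^{-2}(e_i\otimes e_i)$ with leading term in $\Lambda e_1\otimes e_1\Lambda$ of the form $-\sigma_i\,\bar a_{i-1}\cdots\bar a_1\bigl(\sum_{s=0}^{n-1}\epsilon^{2s}\otimes\epsilon^{2(n-1-s)}\bigr)a_1\cdots a_{i-1}$, since a direct check using only the loop $\epsilon$ at vertex $1$ shows that this already matches the $\epsilon$-component of $g\circ k_\tau(e_i\otimes e_i)$. Corrections in $\Lambda e_k\otimes e_k\Lambda$ for $k\ge 2$ are then added iteratively in order to kill the spurious $a_r$- and $\bar a_r$-contributions produced by $R$; the cascade terminates after finitely many steps because the relations in $\Lambda$ eventually force the new correction terms to vanish.

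Once $\gamma^{-2}$ is built, the representative $\tilde z_j\circ\gamma^{-2}$ sends $e_i\otimes e_i$ to the product of the two factors of the $\Lambda e_j\otimes e_j\Lambda$-component of $\gamma^{-2}(e_i\otimes e_i)$. A componentwise check should show that this outcome lies in $Kw_i$ whenever $i\ne 1$, while for $i=1$ it equals $(-1)^j(n-j+1)\epsilon^{2(n-1)}$ modulo $Kw_1$; since $\epsilon^{2(n-1)}=x_0^{n-1}$ and $Im(k_\tau^*)=Soc(\Lambda)$ by the Corollary following Proposition \ref{cohomology complex}, this identifies the class of $\tilde z_j\circ\gamma^{-2}$ in $HH^6(\Lambda)=Z(\Lambda)/Soc(\Lambda)$ with $(-1)^j(n-j+1)x_0^{n-1}h$. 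The main obstacle is keeping track of the cascade of corrections and their signs; as a sanity check one may verify in the case $n=2$ that the leading $n$-term sum produces the coefficient $-n$ for $j=1$ and one correction step produces $(-1)^n$ for $j=n$, matching $(-1)^j(n-j+1)$ in both extreme cases.
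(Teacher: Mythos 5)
Your plan coincides with the paper's own proof: the paper constructs precisely your $\gamma^{0}$, $\gamma^{-1}$, $\gamma^{-2}$ (called $\widehat{\gamma}$, $\beta$ and $\alpha$ there), with $\alpha(e_i\otimes e_i)=(-1)^{1+\frac{i(i-1)}{2}}\sum_{r=1}^{n-i+1}(-1)^{u_r}y_r^i$, whose $r=1$ summand is your leading term (after correcting the exponents to $\sum_{s=0}^{n-i}\epsilon^{2s}\otimes\epsilon^{2(n-i-s)}$ so that the element has the required degree $2n-2$ for every $i$, not only $i=1$) and whose $r\geq 2$ summands are your corrections in $\Lambda e_r\otimes e_r\Lambda$; the closing of the cascade is verified there by an induction on $r$. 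The final evaluation of $\tilde{z}_j\circ\alpha$ vanishes at $e_i\otimes e_i$ for $i\neq 1$ and equals $(-1)^{j}(n-j+1)\epsilon^{2(n-1)}$ at $i=1$, exactly as you predict.
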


\begin{proof} Let $\tilde{\gamma}$ be as in proposition \ref{K-basis for HH^*Lambda1}. It is clear that
the morphism of $\Lambda$-bimodules $\widehat{\gamma}: \oplus_{a\in
Q_1}\Lambda e_{i(a)}\otimes e_{t(a)}\Lambda \longrightarrow
\oplus_{i\in Q_0}\Lambda e_i\otimes e_i\Lambda$ which maps
$e_{i(a)}\otimes e_{t(a)}\rightsquigarrow \delta_{\epsilon
a}e_1\otimes e_1$ is a lifting of $\tilde{\gamma}$ (i.e. $u\circ
\widehat{\gamma}= \tilde{\gamma}$).

If we take now the morphism $\beta: \oplus_{i\in Q_0}\Lambda
e_i\otimes e_i \Lambda\longrightarrow \oplus_{a\in Q_1}\Lambda
e_{i(a}\otimes e_{t(a)}\Lambda$ determined by the rule
$\beta(e_i\otimes e_i)= \delta_{i1}e_{i(\epsilon)}\otimes
e_{t(\epsilon)}$, then direct computation shows that
$\widehat{\gamma}\circ R_{\tau}= \delta \circ \beta$. Our aim is now
to define a morphism $\alpha: \oplus_{i\in Q_0}\Lambda e_i \otimes
e_i \Lambda\longrightarrow \oplus_{i\in Q_0}\Lambda e_i \otimes e_i
\Lambda$ completing commutatively the following diagram

$$\xymatrix{P\ar[r]^{k_{\tau}} \ar[d]_{\alpha} & P \ar[r]^{R_{\tau}}
\ar[d]_{\beta} & Q \ar[d]_{\widehat{\gamma}} \\ P \ar[r]_{R} & Q
\ar[r]_{\delta} & P}$$

Once our goal is attained, the composition

$$\oplus_{i\in Q_0}\Lambda e_i \otimes e_i \Lambda
\stackrel{\alpha}{\longrightarrow} \oplus_{i\in Q_0}\Lambda e_i
\otimes e_i \Lambda \stackrel{\tilde{z}_j}{\longrightarrow}
\Lambda$$ will represent+ the element $z_j\gamma \in HH^6(\Lambda)$.

For each vertex $i\in Q_0$ and for each integer $r=1, 2, \dots,
n-i+1$, we put

$$y_r^i= \sum_{k=0}^{n-i-r+1}\bar{a}_{i-1}\cdots \bar{a}_1\epsilon^{2k}a_1
\cdots a_{r-1}\otimes \bar{a}_{r-1}\cdots \bar{a}_1
\epsilon^{2(n-i-r-k+1)}a_1\cdots a_{i-1},$$ which is an element of
$e_i\Lambda e_r\otimes e_r \Lambda e_i$. Given
$x=\bar{a}_{i-1}...\bar{a}_1\epsilon^k\in e_iBe_1$ (convening that
$\bar{a}_{i-1}...\bar{a}_1=e_1$ when $i=1$), we put $x^\dagger
=\epsilon^{2(n-i)+1-k}a_1...a_{i-1}$. Then we have

$$xx^\dagger
=\bar{a}_{i-1}...\bar{a}_1\epsilon^{2(n-i)+1}a_1...a_{i-1}=(-1)^{\frac{i(i-1)}{2}}w_i,$$for
each $i\in Q_0$. Note that this implies that $x^\dagger
=(-1)^{\frac{i(i-1)}{2}}x^*$.

We claim that if $(u_n)_{n\geq 1}$ is the sequence of integers
$0,0,1,1,0,0,1,1, \dots$, then the following equality holds

$$R(\sum_{r=1}^{n-i+1}(-1)^{u_r}y_r^i)=\sum_{x\in e_iBe_1}x\otimes
x^\dagger.$$ From this equality, by direct computation the following
will follow:
$$R((-1)^{1+\frac{i(i-1)}{2}}\sum_{r=1}^{n-i+1}(-1)^{u_r}y_r^i)= - \sum_{x\in e_iBe_1} x\otimes x^*.$$
As a consecquence,  the morphism of $\Lambda$-bimodules $\alpha:
\oplus_{i\in Q_0}\Lambda e_i \otimes e_i \Lambda\longrightarrow
\oplus_{i\in Q_0} \Lambda e_i \otimes e_i \Lambda$, determined by
the rule $\alpha(e_i\otimes e_i)= (-1)^{1+\frac{i(i-1)}{2}}
\sum_{r=1}^{n-i+1}(-1)^{u_r}y_r^i$, will satisfy the desired
equality $\beta \circ k_{\tau}= R\circ \alpha$, because $(\beta
\circ k_{\tau})(e_i\otimes e_i)= -\sum_{x\in e_iBe_1}x\otimes x^*$
for all $i\in Q_0$.

To settle our claim we shall prove by induction on $s=1, 2, \dots
n-i+1$ the equality

$$ \sum_{r=1}^s (-1)^{u_r} R(y_r^i)- \sum_{x\in e_iBe_1}x\otimes x^\dagger=$$
$$(-1)^{u_s}\sum_{k=0}^{n-i-s+1}
\bar{a}_{i-1} \cdots \bar{a}_1 \epsilon^{2k}a_1 \cdots
a_{s-1}(a_s\otimes e_s + e_s\otimes \bar{a}_s)\bar{a}_{s-1}\cdots
\bar{a}_1 \epsilon^{2(n-i-k-s+1)}a_1\cdots a_{i-1}$$

The equality, when taken for $s= n-i+1$, will give:

$$\sum_{r=1}^{n-i+1}(-1)^{u_r} R(y_r^i)- \sum_{x\in e_iBe_1}x\otimes
x^\dagger=(-1)^{u_{n-i+1}} (\bar{a}_{i-1}\cdots \bar{a}_1
\epsilon^{2\cdot 0}a_1\cdots a_{n-i}a_{n-i+1}\otimes$$

$$\bar{a}_{n-i}\cdots \bar{a}_1\epsilon^{2\cdot 0}a_1\cdots
a_{i-1} + \bar{a}_{i-1}\cdots \bar{a}_1 \epsilon^{2\cdot 0}a_1\cdots
a_{n-i}\otimes \bar{a}_{n-i+1}\bar{a}_{n-i}\cdots \bar{a}_1
\epsilon^{2\cdot 0}a_1\cdots a_{i-1})$$ But the second member of
this equality is zero (see remark \ref{Basis properties}(2)) so that
our claim will be settled.

For $s=1$, we have

$$R(y_1^i)- \sum_{x\in e_iBe_1}x\otimes
x^\dagger=$$
$$\sum_{k=0}^{n-i} \bar{a}_{i-1}
\cdots \bar{a}_1 \epsilon^{2k}(\epsilon\otimes e_1 + e_1\otimes
\epsilon + a_1\otimes e_1 + e_1 \otimes
\bar{a}_1)\epsilon^{2(n-i-k)}a_1\cdots a_{i-1}-$$

$$\sum_{t=0}^{2(n-i)+1} \bar{a}_{i-1} \cdots \bar{a}_1
\epsilon^{t}\otimes \epsilon^{2(n-i)+1 -t)}a_1\cdots a_{i-1})=$$

$$ \sum_{k=0}^{n-i}
\bar{a}_{i-1} \cdots \bar{a}_1 \epsilon^{2k}(a_1\otimes e_1 +
e_1\otimes \bar{a}_1) \epsilon^{2(n-i-k)}a_1\cdots a_{i-1}$$ and the
desired equality is true in this case.

If now $s>1$ and the equality holds for $s-1$ then we get

$$\sum_{r=1}^{s}(-1)^{u_r} R(y_r^i)- \sum_{x\in e_iBe_1}x\otimes
x^\dagger=$$

$$(-1)^{u_s} R(y_s^i)+ (-1)^{u_{s-1}}
\sum_{k=0}^{n-i-s+2} \bar{a}_{i-1} \cdots \bar{a}_1 \epsilon^{2k}a_1
\cdots a_{s-2}(a_{s-1}\otimes e_{s-1} +$$

$$e_{s-1}\otimes \bar{a}_{s-1})\bar{a}_{s-2}\cdots \bar{a}_1
\epsilon^{2(n-i-k-s+2)}a_1\cdots a_{i-1})=$$

$$(-1)^{u_s}\sum_{k=0}^{n-i-s+1}
\bar{a}_{i-1} \cdots \bar{a}_1 \epsilon^{2k}a_1 \cdots
a_{s-1}(a_{s}\otimes e_{s} + e_{s}\otimes \bar{a}_{s} +$$

$$\bar{a}_{s-1}\otimes e_s + e_s\otimes a_{s-1})\bar{a}_{s-1}\cdots
\bar{a}_1 \epsilon^{2(n-i-k-s+1)}a_1\cdots a_{i-1} +$$

$$(-1)^{u_{s-1}}\sum_{k=0}^{n-i-s+2}
\bar{a}_{i-1} \cdots \bar{a}_1 \epsilon^{2k}a_1 \cdots
a_{s-2}(a_{s-1}\otimes e_{s-1} + e_{s-1}\otimes
\bar{a}_{s-1})\bar{a}_{s-2}\cdots \bar{a}_1
\epsilon^{2(n-i-k-s+2)}a_1\cdots a_{i-1}$$ Using the equality $(4)$
in remark \ref{Basis properties} with $j=s$ and the one obtained
from it by applying the canonical anti-isomorphism $(\bar{-})$, we
can rewrite the second member of the last expression as

$$(-1)^{u_s}\sum_{k=0}^{n-i-s+1}
\bar{a}_{i-1} \cdots \bar{a}_1 \epsilon^{2k}a_1 \cdots
a_{s-1}(a_{s}\otimes e_{s} + e_{s}\otimes
\bar{a}_{s})\bar{a}_{s-1}\cdots \bar{a}_1
\epsilon^{2(n-i-k-s+1)}a_1\cdots a_{i-1} +$$

$$(-1)^{u_s} (-1)^{s-1}\sum_{k=0}^{n-i-s+1}
(\bar{a}_{i-1} \cdots \bar{a}_1 \epsilon^{2(k+1)}a_1 \cdots
a_{s-2}\otimes \bar{a}_{s-1}\cdots \bar{a}_1
\epsilon^{2(n-i-k-s+1)}a_1\cdots a_{i-1} +$$

$$\bar{a}_{i-1} \cdots \bar{a}_1 \epsilon^{2(k)}a_1 \cdots
a_{s-1}\otimes \bar{a}_{s-2}\cdots \bar{a}_1
\epsilon^{2(n-i-k-s+2)}a_1\cdots a_{i-1}) +$$

$$(-1)^{u_{s-1}}\sum_{k=0}^{n-i-s+2}
\bar{a}_{i-1} \cdots \bar{a}_1 \epsilon^{2k}a_1 \cdots
a_{s-2}(a_{s-1}\otimes e_{s-1} + e_{s-1}\otimes
\bar{a}_{s-1})\bar{a}_{s-2}\cdots \bar{a}_1
\epsilon^{2(n-i-k-s+2)}a_1\cdots a_{i-1}$$ Except two of them, the
subsummands of the second and third summands of this expression
cancel by pairs due to the fact that $u_s + s - 1 - u_{s-1}$ is
always an odd integer. The two subsummands which do not cancel are

$$(-1)^{u_{s-1}}
\bar{a}_{i-1} \cdots \bar{a}_1 a_1 \cdots a_{s-2}\otimes
\bar{a}_{s-1}\bar{a}_{s-2}\cdots \bar{a}_1
\epsilon^{2(n-i-s+2)}a_1\cdots a_{i-1}$$ and

$$(-1)^{u_{s-1}}
\bar{a}_{i-1} \cdots \bar{a}_1 \epsilon^{2(n-i-s+2)}a_1 \cdots
a_{s-2}a_{s-1}\otimes \bar{a}_{s-2}\cdots \bar{a}_1 a_1\cdots
a_{i-1}$$ But these two subsummands are zero due to $(2)$ in remark
\ref{Basis properties} and, hence, our claim is settled.

We are finally ready to end the proof. The element $z_j\gamma$ is
represented by $\tilde{z}_j \circ \alpha$, where $\alpha:
\oplus_{i\in Q_0} \Lambda e_i\otimes e_i\Lambda\longrightarrow
\oplus_{i\in Q_0}\Lambda e_i \otimes e_i\Lambda$ is the morphism
determined by the rule $\alpha(e_i\otimes e_i)=
(-1)^{1+\frac{i(i-1)}{2}}\sum_{r=1}^{n-i+1}(-1)^{u_r}y_r^i$, for
each $i\in Q_0$. We get:

$$(\tilde{z}_j\circ \alpha)(e_i\otimes e_i)= (-1)^{1+\frac{i(i-1)}{2}} \sum_{r=1}^{n-i+1}
(-1)^{u_r}\tilde{z}_j (y_r^i).$$ This expression is zero in case $j>
n-i+1$, while it is equal to
$(-1)^{1+\frac{i(i-1)}{2}}(-1)^{u_j}\tilde{z}_j (y_j^i)$ otherwise.
But, using remark \ref{Basis properties}(3), we have an equality

$$(-1)^{1+\frac{i(i-1)}{2}}(-1)^{u_j}\tilde{z}_j (y_j^i)$$
$$=(-1)^{1+\frac{i(i-1)}{2}}(-1)^{u_{j}}\sum_{k=0}^{n-i-j+1}
\bar{a}_{i-1} \cdots \bar{a}_1 \epsilon^{2k}a_1 \cdots
a_{j-1}\bar{a}_{j-1}\cdots \bar{a}_1
\epsilon^{2(n-i-j-k+1)}a_1\cdots a_{i-1})=$$

$$(-1)^{1+\frac{i(i-1)}{2}}(-1)^{u_j} (-1)^{\frac{j(j-1)}{2}}(n-i-j+2)
\bar{a}_{i-1} \cdots \bar{a}_1 \epsilon^{2(n-i)}a_1 \cdots
a_{i-1}$$in case $1\leq j\leq n-i+1$. But this is zero unless $i=1$
(see $(2)$ in remark \ref{Basis properties}).

In case $i=1$, we have $(\tilde{z}_j \circ \alpha)(e_1\otimes e_1)=
(-1)^{1+ u_j  +\frac{j(j-1)}{2}} (n-j+1)\epsilon^{2(n-1)}$.

It is left as an exercise to see that $1+ u_j +
\frac{j(j-1)}{2}\equiv j$ $(\text{mod}2)$ and the equality
$z_j\gamma= (-1)^{j}(n-j+1)x_0^{n-1}h$ follows since $x_0^{n-1}h$ is
the element
$\epsilon^{2n-2}+Im(k_\tau^*)\in\frac{Ker(\delta^*)}{Im(k_\tau^*)}=HH^6(\Lambda)$.

\end{proof}

In order to know the multiplicative structure of $HH^*(\Lambda)$,
the role of $HH^3(\Lambda)$ is fundamental. The following is a
great help.

\begin{lema}\label{products involving HH^3} The following assertions
hold:

\begin{enumerate}[1)]

    \item $HH^3(\Lambda)\cdot HH^{2m+1}(\Lambda)=0$, for all
    integers $m\geq 0$.

    \item $z_kt_j= \delta_{jk}x_0^{n-1}y\gamma$, for all $i, j\in
    Q_0$.

    \item $t_j\gamma= \delta_{1j}x_0^{n-1}yh$, for all $j\in Q_0$.

\end{enumerate}

\end{lema}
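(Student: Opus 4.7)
My plan is to compute these Yoneda products at the chain level using the minimal projective resolution of Proposition \ref{projetive resolution}, exploiting that each $t_k$ is represented by $\tilde{t}_k(e_i \otimes e_i) = \delta_{ik} w_k \in Soc(\Lambda)$ together with $J(\Lambda) \cdot Soc(\Lambda) = Soc(\Lambda) \cdot J(\Lambda) = 0$. As a foundational step I would first show $y \cdot t_k = 0$: lifting $\tilde{t}_k$ via $t_k^0(e_i \otimes e_i) = \delta_{ik}(e_k \otimes w_k)$ yields $t_k^0 \circ \delta_\tau(e_{i(a)} \otimes e_{t(a)}) = \delta_{k, t(a)}(a \otimes w_k)$, which is $\delta$ applied to $t_k^{-1}(e_{i(a)} \otimes e_{t(a)}) := \delta_{k, t(a)}(e_{i(a)} \otimes w_k)$ in the $a$-summand of $Q$ (the cancellation $aw_k = 0$ being crucial). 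Then $y \cdot t_k = \tilde{y} \circ t_k^{-1}$ sends $e_{i(a)} \otimes e_{t(a)} \mapsto aw_k = 0$.

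For Parts (2) and (3) I would extend the lift further; the pattern $t_j^{-k}(e_i \otimes e_i) = \delta_{ij}(e_j \otimes w_j)$ persists on the $P$-indexed summands (with analogous formulas on the $Q$-indexed summands) because each time $R$, $k$, or $\delta_\tau$ is applied to $e_j \otimes w_j$ the socle annihilation kills every extraneous cross term. Then $z_k \cdot t_j = \tilde{z}_k \circ t_j^{-2}$ maps $e_i \otimes e_i$ to $\delta_{ij}\delta_{jk} w_j$; combining Remark \ref{w_j - w_j+1 belongs to Im(RTauStar)} (which gives $w_j \equiv w_1$ modulo $Im(R_\tau^*)$) with the identification $w_1 = \epsilon^{2n-1}$ as a representative of $x_0^{n-1} y\gamma$ (visible in the proof of Proposition \ref{K-basis for HH^*Lambda1}) yields $z_k t_j = \delta_{jk} x_0^{n-1} y\gamma$. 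Similarly, $\gamma \cdot t_j = \tilde{\gamma} \circ t_j^{-4}$ is supported only on the $\epsilon$-summand and gives $\delta_{1j} w_1 = \delta_{1j} x_0^{n-1} y h$ in $HH^7$. For the outstanding case $t_j t_k = 0$ of Part (1), I would use the identity $(t_j t_k) \gamma = t_j(t_k \gamma) = \delta_{1k} x_0^{n-1}(t_j y) h = 0$ (using $t_j y = 0$ from the foundational step and graded commutativity); since $h$ is invertible in $\underline{HH}^*$ by Proposition \ref{localization}, multiplication by $h$ identifies $HH^4 \cong HH^{10}$, so multiplication by $\gamma$ sends the basis $\{x_0^i h\}$ of $HH^6$ bijectively to the basis $\{x_0^i h\gamma\}$ of $HH^{10}$, forcing $t_j t_k = 0$. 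All remaining products $t_k \cdot \xi$ with $\xi$ of odd degree reduce to these cases by associativity, by $y^2 = 0$, and by periodicity via $h$.

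The principal technical obstacle will be verifying each lifting equation $d \circ t_j^{-k-1} = t_j^{-k} \circ d_\tau$ (and its variants) with correct bookkeeping of the signs introduced by $\tau$ and of which arrows contribute in each summand of $P$ and $Q$. The saving grace is that the socle-concentration of every intermediate morphism localizes the computation to a single vertex $j$ (and, on $Q$, to the single arrow $a$ ending at $j$), so the combinatorial complexity stays manageable and the clean scalar formulas emerge uniformly.
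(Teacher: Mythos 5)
Your proposal is correct and follows essentially the same route as the paper: both compute these products by explicitly lifting $\tilde{t}_j$ through the minimal bimodule resolution and using $J(\Lambda)\cdot Soc(\Lambda)=Soc(\Lambda)\cdot J(\Lambda)=0$ to keep every lift socle-concentrated at the single vertex $j$, then identify the resulting representatives $\omega_j$, $\epsilon^{2n-1}$ with $x_0^{n-1}y\gamma$ and $x_0^{n-1}yh$ exactly as you describe. The only differences are cosmetic: you take asymmetric lifts such as $e_j\otimes \omega_j$ where the paper uses symmetrized ones like $\frac{1}{2}(\omega_j\otimes e_j+e_j\otimes\omega_j)$ (so your version avoids the factors $\frac{1}{2}$), and you obtain $t_jt_k=0$ indirectly from injectivity of multiplication by $\gamma$ on $HH^6(\Lambda)$, although your own lift $t_j^{-3}$ already yields $t_kt_j=\tilde{t}_k\circ t_j^{-3}=0$ directly since $\omega_k\omega_j=0$.
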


\begin{proof} From the proof of lemma \ref{HH^i as
$Z(Lambda)-module$} we know that multiplication by $h$ gives an
isomorphism of $Z(\Lambda)$-modules $\phi_h:
HH^k(\Lambda)\stackrel{\sim}{\longrightarrow}HH^{k+6}(\Lambda)$, for
all $k>0$, and multiplication by $y$ yields another one $\phi_y:
HH^4(\Lambda)\stackrel{\sim}{\longrightarrow}HH^5(\Lambda)$. In
particular, we get $HH^{k+6}(\Lambda)= HH^k(\Lambda)\cdot
HH^6(\Lambda)$, for all $k>0$, and $HH^1(\Lambda)\cdot HH^4(\Lambda
)=HH^5(\Lambda)$. From these considerations we deduce that in order
to prove assertion $1$, we just need to check that the products
$HH^1(\Lambda) \cdot HH^3(\Lambda)$ and $HH^3(\Lambda) \cdot
HH^3(\Lambda)$ are both zero.

We will now prove the three assertions of the lemma by considering
the following commutative diagram, for each $j\in Q_0$:

$$\begin{large}\xymatrix{Q \ar[r]^{\delta} \ar[d]^{l_j} & P \ar[r]^{k_{\tau}} \ar[d]^{h_j}
 & P \ar[r]^{R_{\tau}} \ar[d]^{g_j} & Q \ar[r]^{\delta_{\tau}} \ar[d]^{f_j} & P \ar[d]^{\widehat{t}_j} \\
 Q \ar[r]_{\delta_{\tau}} & P \ar[r]_k & P \ar[r]_R & Q \ar[r]_{\delta} & P }\end{large}$$ where the vertical arrows are the only morphisms of
$\Lambda$-bimodules satisfying the following properties:

\begin{enumerate}[a)]

    \item $\widehat{t}_j(e_i\otimes e_i)= \frac{1}{2} \delta_{ij}(\omega_j\otimes e_j + e_j\otimes \omega_j)$

    \item In case $(j,a)\neq (1, \epsilon)$ we have

    $$f_j(e_{i(a)}\otimes e_{t(a)})=\left\{\begin{array}{ccc}

    0\hspace{0.25cm} \text{if} \hspace{0.25cm} j\not \in \{i(a),t(a)\}\\

    \\

    \frac{1}{2}e_{i(a)}\otimes \omega_{t(a)}\hspace{0.25cm} \text{if} \hspace{0.25cm}j=t(a) \\

    \\

    -\frac{1}{2}\omega_{i(a)}\otimes e_{t(a)}\hspace{0.25cm} \text{if}\hspace{0.25cm} j=i(a)\\

    \end{array}\right.$$

    and, for $(j,a)=(1, \epsilon)$, we have:

    $$f_1(e_{i(\epsilon)}\otimes e_{t(\epsilon)})= \frac{1}{2} (e_{i(\epsilon)}\otimes
    \omega_{t(\epsilon)} - \omega_{i(\epsilon)}\otimes e_{t(\epsilon)})$$

    \item $g_j= \widehat{t}_j$

    \item $h_j(e_i\otimes e_i)= \frac{1}{2}\delta_{ij}(e_j\otimes \omega_j - \omega_j\otimes e_j)$

    \item In case $(j,a)\neq (1, \epsilon)$ we have

    $$l_j(e_{i(a)}\otimes e_{t(a)})=\left\{\begin{array}{ccc}

    0\hspace{0.25cm} \text{if} \hspace{0.25cm} j\not \in \{i(a),t(a)\}\\

    \\

    \frac{1}{2}e_{i(a)}\otimes \omega_{t(a)}\hspace{0.25cm} \text{if} \hspace{0.25cm}j=t(a) \\

    \\

    \frac{1}{2}\omega_{i(a)}\otimes e_{t(a)}\hspace{0.25cm} \text{if}\hspace{0.25cm} j=i(a)\\

    \end{array}\right.$$

    and, in case $(j,a)=(1, \epsilon)$, we have:

    $$l_1(e_{i(\epsilon)}\otimes e_{t(\epsilon)})= \frac{1}{2}(e_{i(\epsilon )}\otimes
    \omega_{t(\epsilon)} + \omega_{i(\epsilon)}\otimes e_{t(\epsilon)})$$

    \end{enumerate}

    Although tedious, checking that these morphisms make commutative
    the last diagram is easy and routinary. It is left to the
    reader.

    We know have:

    \vspace{0.5cm}

    \noindent 1) .i) $HH^1(\Lambda) \cdot HH^3(\Lambda)=0$

    \vspace{0.25cm}

        We already know that $\{y, x_0y, \dots, x_0^{n-1}y\}$ is a
        basis of $HH^1(\Lambda)$. So we only need to check that

        $yt_j=0$, for all $j\in Q_0$. But $yt_j$ is represented by
        the composition

        $$Q \stackrel{f_j}{\longrightarrow}Q\stackrel{\tilde{y}}{\longrightarrow}
        \Lambda .$$

        Due to the fact that $a\omega_{t(a)}=0=\omega_{i(a)}a$
        $\forall a\in Q_1$, one gets that $(\tilde{y}\circ f_j)(e_{i(a)}\otimes e_{t(a)})=0$

        $\forall a\in Q_1$, and hence $\tilde{y}\circ f_j=0$

        \begin{enumerate}[1)]

        \item .ii) $HH^3(\Lambda)\cdot HH^3(\Lambda)=0$

        The element $t_kt_j$ of $HH^6(\Lambda)$ is represented by
        the composition

        $$P\stackrel{h_j}{\longrightarrow}P \stackrel{\tilde{t}_k}{\longrightarrow}\Lambda$$

        We have $(\tilde{t}_k \circ h_j)(e_i\otimes e_i)= \tilde{t}_k (
        \frac{1}{2}\delta_{ij}(e_j\otimes \omega_j - \omega_j\otimes
        e_j))$. This is clearly zero for all $i,j,k \in Q_0$.

    \item $z_kt_j= \delta_{jk}x_0^{n-1}y\gamma$

    The element $z_kt_j\in HH^5(\Lambda)$ is represented by the
    composition

    $$P\stackrel{g_j}{\longrightarrow}P \stackrel{\tilde{z}_k}{\longrightarrow} \Lambda$$

    Due to the fact that $g_j(e_i\otimes e_i)=0$ for $i\neq j$ and
    $g_j(\Lambda e_j\otimes e_j\Lambda)\subseteq \Lambda e_j\otimes
    e_j\Lambda$, we readily see that $\tilde{z}_k\circ g_j=0$
    when $j\neq k$. Moreover, in case $j=k$, we have

    $$(\tilde{z}_j \circ g_j)(e_i\otimes e_i)= \frac{1}{2}\delta_{ij}(\omega_j + \omega_j)=
    \delta_{ij}\omega_j$$

    From remark \ref{w_j - w_j+1 belongs to Im(RTauStar)} we know
    that, when we view  $\omega_j$ as an element of $Ker(k_{\tau}^*)=
    Soc(\Lambda)$, we have $\omega_j + Im(R_{\tau}^*)= \omega_{j+1} +
    Im(R_{\tau}^*)$ for all $j=1, 2, \dots, n-1$. But the description of $y\gamma$ in the proof of
    lemma \ref{HH^i as $Z(Lambda)-module$} implies that $\omega_1 + Im(R_{\tau}^*)=
    \epsilon^{2n-1} + Im(R_{\tau}^*)$ is precisely the element $x_0^{n-1}y\gamma\in HH^5(\Lambda)$

    \item $\gamma t_j= \delta_{1j}x_0^{n-1}yh$
Graded commutativity gives that $t_j\gamma =\gamma t_j$ and the
element $\gamma t_j\in HH^7(\Lambda)$ is represented by the
    composition

    $$Q\stackrel{l_j}{\longrightarrow} Q \stackrel{\tilde{\gamma}}{\longrightarrow} \Lambda .$$

    Note that $l_j(e_{i(a)}\otimes e_{t(a)})\in \Lambda e_{i(a)}\otimes
    e_{t(a)}\Lambda$, from which we deduce that $\tilde{\gamma}\circ l_j=0$
    for $j\in Q_0\slash \{1\}$. And for $j=1$ we have

    $$(\tilde{\gamma}\circ l_1)(e_{i(a)}\otimes e_{t(a)})=\left\{\begin{array}{ccc}

    0\hspace{0.25cm} \text{if} \hspace{0.25cm} a\neq \epsilon\\

    \\

    \frac{1}{2}(\omega_1 + \omega_1)= \omega_1= \epsilon^{2n-1}\hspace{0.25cm} \text{if} \hspace{0.25cm}a=\epsilon \\

    \end{array}\right.$$ But, due to the identification $HH^1(\Lambda)= HH^7(\Lambda)$,
    which is just multplication by $h$, and the proof of lemma \ref{HH^i as
    $Z(Lambda)-module$}, we know that $x_0^{n-1}yh$ is precisely the
    element $\epsilon^{2n-1} + Im(\delta^*)=
    \frac{Ker(R^*)}{Im(\delta^*)}$. Therefore we get $t_j\gamma =\gamma t_j=
    \delta_{1j}x_0^{n-1}yh$, for all $j\in Q_0$.

\end{enumerate}

\end{proof}

\begin{lema} \label{HH^4 x HH^4}$\gamma^2= z_1h$

\end{lema}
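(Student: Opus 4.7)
To prove $\gamma^2=z_1h$ in $HH^8(\Lambda)$, the plan is to construct an explicit chain lifting of a representative of $\gamma$ through four steps of the minimal projective resolution from Proposition \ref{projetive resolution} and then identify the resulting Yoneda composite with the representative of $z_1h$ modulo coboundaries.

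Concretely, by Proposition \ref{K-basis for HH^*Lambda1}, $\gamma$ is represented by $\tilde{\gamma}:P^{-4}=Q\to\Lambda$, $e_{i(a)}\otimes e_{t(a)}\mapsto\delta_{\epsilon a}e_1$, while $h$ is represented by the multiplication map $u:P^{-6}=P\to\Lambda$. Since the resolution is $6$-periodic, a lifting of $u$ at each level may be taken as the identity under the natural identifications $P^{-6-k}=P^{-k}$, so $z_1h$ is represented by $\tilde{z}_1:P^{-8}=P\to\Lambda$, $e_i\otimes e_i\mapsto\delta_{1i}e_1$. I would then build bimodule morphisms $\gamma_k:P^{-4-k}\to P^{-k}$ ($k=0,1,2,3,4$) with $u\circ\gamma_0=\tilde{\gamma}$ and $d^{-k}\circ\gamma_k=\gamma_{k-1}\circ d^{-4-k}$, starting from the obvious $\gamma_0:Q\to P$ given by $e_{i(a)}\otimes e_{t(a)}\mapsto\delta_{\epsilon a}\,e_1\otimes e_1$. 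Then $\gamma^2$ is represented by $\tilde{\gamma}\circ\gamma_4:P^{-8}=P\to\Lambda$, and the lemma follows once we verify that $(\tilde{\gamma}\circ\gamma_4)(e_i\otimes e_i)-\delta_{1i}e_1\in e_iJe_i$ for each $i\in Q_0$: by Lemma \ref{Image of RStar and RtauStar}, such an element lies in $\mathrm{Im}(R^*)$, which is the coboundary space at level $8$ since $k^*=0$ (Proposition \ref{cohomology complex}).

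The main obstacle will be the four-step lifting $\gamma_\bullet$. Each intermediate $\gamma_k$ must be written as a finely tuned sum indexed by the basis $B$ of Proposition \ref{prop.fixed basis to work}, with the required commutativity checked using the path identities of Corollary \ref{Basis properties} (in particular $a_1\cdots a_{j-1}\bar{a}_{j-1}\cdots\bar{a}_1=(-1)^{\frac{j(j-1)}{2}}\epsilon^{2(j-1)}$ and $\bar{a}_ia_i\cdots a_j=(-1)^{j-i+1}a_{i+1}\cdots a_{j+1}\bar{a}_{j+1}$), together with socle-sum cancellations of the type used in Lemma \ref{HH^2 x HH^4}. A simplifying observation that I would exploit is that $\tilde{\gamma}$ is supported only on the $\epsilon$-summand of $Q$, so each $\gamma_k$ can be chosen to be concentrated at vertex $1$, reducing most computations to identities in $\Lambda e_1\otimes e_1\Lambda$; the resulting argument should run in parallel with, though be somewhat more elaborate than, the lifting constructed in the proof of Lemma \ref{HH^2 x HH^4}.
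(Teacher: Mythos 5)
Your overall strategy is the paper's: lift $\tilde{\gamma}$ through four steps of the periodic resolution, observe that $z_1h$ is represented by $\tilde{z}_1$ on $P^{-8}=P$, and compare $\tilde{\gamma}\circ\gamma_4$ with $\tilde{z}_1$ modulo $Im(R^*)$. The endpoint identifications are all correct. But the "simplifying observation" on which you propose to lean is false, and it is false precisely at the step where all the work lies. The middle square of the lifting is the one over the differential $k_\tau$ (from $P^{-6}=P$ to $P^{-5}=P$, landing over $R$), so $\gamma_2$ must satisfy $R\circ\gamma_2=\gamma_1\circ k_\tau$. With $\gamma_1=\beta$ concentrated at vertex $1$, one computes $(\beta\circ k_\tau)(e_i\otimes e_i)=-\sum_{x\in e_iBe_1}x\otimes x^*$, which is nonzero for \emph{every} $i\in Q_0$ because $e_iBe_1\neq\emptyset$ for all $i$. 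Hence $\gamma_2(e_i\otimes e_i)\neq 0$ for every $i$, and its value necessarily spreads over all the summands $\Lambda e_i\otimes e_r\otimes e_r\otimes e_i\Lambda$ reachable by elements of $e_iBe_1$; a $\gamma_2$ concentrated at vertex $1$ cannot make that square commute. In the paper this $\gamma_2$ is exactly the map $\alpha(e_i\otimes e_i)=(-1)^{1+\frac{i(i-1)}{2}}\sum_{r=1}^{n-i+1}(-1)^{u_r}y_r^i$ already built in the proof of Lemma \ref{HH^2 x HH^4}, supported on all vertices.

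Beyond that, your plan leaves the actual construction of $\gamma_2,\gamma_3,\gamma_4$ as future work, and this is where the content of the lemma sits. The paper's economy is worth noting: it reuses the maps $\hat{\gamma}$, $\beta$, $\alpha$ from Lemma \ref{HH^2 x HH^4}, so two of the five squares commute by that lemma, a third is obtained for free by applying the functor $G_\tau$ of Lemma \ref{lema.equivalencia en bimodulos proyectivos} (using $\beta_\tau=\beta$ and $\hat{\gamma}_\tau=\hat{\gamma}$ since $\tau$ fixes the vertices), and only one new square, $\alpha\circ\delta=k\circ\hat{\gamma}$, has to be verified. That verification is a genuinely delicate computation: it needs the cancellation identities of Corollary \ref{Basis properties} and, crucially, the dualizability of the basis $B$ (the step converting $\sum_{x\in e_rBe_1}x^*\otimes x$ into $\sum_{y\in e_1Be_r}y\otimes y^*$ uses $x^{**}=x$). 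Without either the reuse of the earlier lifting or an explicit new construction of a non-concentrated $\gamma_2$, the proof as proposed does not go through.
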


\begin{proof}

We have the following commutative diagram of $\Lambda$-bimodules:

$$\begin{large}\xymatrix{P \ar[r]^{R} \ar[d]^{\beta} & Q \ar[r]^{\delta} \ar[d]^{\widehat{\gamma}}
 & P \ar[r]^{k_{\tau}} \ar[d]^{\alpha} & P \ar[r]^{R_{\tau}} \ar[d]^{\beta} & Q \ar[d]^{\widehat{\gamma}} \\
 Q \ar[r]_{\delta_{\tau}} & P \ar[r]_k & P \ar[r]_R & Q \ar[r]_{\delta} & P }\end{large},$$
where $\alpha$, $\beta$ and $\hat{\gamma}$ are the morphisms defined
in the proof of Lemma \ref{HH^2 x HH^4}. Indeed the commutativity of
the two right squares was proved in that lemma. On the other hand,
we have $\hat{\gamma}_\tau =\hat{\gamma}$ and $\beta_\tau =\beta$
(see Lemma \ref{lema.equivalencia en bimodulos proyectivos}) since
$\tau$ is fixes the vertices. The commutativity of the left most
square is then obtained from the commutativity of the right most
square by applying the equivalence $G_\tau :_\Lambda
Proj_\Lambda\stackrel{\cong}{\longrightarrow}_\Lambda Proj_\Lambda$.

It remains to prove the commutativity of the second square from left
to right. We need to prove that $(\alpha\circ\delta
)(e_{i(a)}\otimes e_{t(a)})=0$, when $a\neq\epsilon$, and that
$(\alpha\circ\delta )(e_{i(\epsilon)}\otimes e_{t(\epsilon
)})=\sum_{x\in e_1B}(-1)^{deg(x)}x\otimes x^*$. For the first
equality, we do the case $a=a_i$, with $i=1,...,n-1$,   the case
$a=\bar{a}_i$ being symmetric. We have

$$(\alpha\circ\delta )(e_{i(a_i)}\otimes e_{t(a_i)})=a_i\alpha
(e_{i+1}\otimes e_{i+1})-\alpha (e_i\otimes e_i)a_i=$$

$$=a_i[(-1)^{1+\frac{(i+1)i}{2}}\sum_{r=1}^{n-i}(-1)^{u_r}y_r^{i+1}]-[(-1)^{1+\frac{i(i-1)}{2}}\sum_{r=1}^{n-i+1}(-1)^{u_r}y_r^{i}]a_i=$$

$$=(-1)^{1+\frac{(i+1)i}{2}}\sum_{r=1}^{n-i}(-1)^{u_r}a_iy_r^{i+1}+(-1)^{\frac{i(i-1)}{2}}\sum_{r=1}^{n-i+1}(-1)^{u_r}y_r^{i}a_i.\hspace*{1cm}(\bigstar )$$
Direct calculation shows that

$$a_iy_r^{i+1}=\sum_{k=0}^{n-i-r}a_i\bar{a}_i...\bar{a}_1\epsilon^{2k}a_1...a_{r-1}\otimes\bar{a}_{r-1}...\bar{a}_1\epsilon^{2(n-i-r-k)}a_1...a_i.$$
Using remark \ref{Basis properties}(4), we then have

$$a_iy_r^{i+1}=(-1)^i \sum_{k=0}^{n-i-r}\bar{a}_{i-1}...\bar{a}_1\epsilon^{2(k+1)}a_1...a_{r-1}\otimes\bar{a}_{r-1}...\bar{a}_1\epsilon^{2(n-i-r-k)}a_1...a_i=$$

$$(-1)^i \sum_{t=1}^{n-i-r+1}\bar{a}_{i-1}...\bar{a}_1\epsilon^{2t}a_1...a_{r-1}\otimes\bar{a}_{r-1}...\bar{a}_1\epsilon^{2(n-i-r-t+1)}a_1...a_i=$$

$$(-1)^i[y_r^ia_i-\bar{a}_{i-1}...\bar{a}_1\epsilon^{2\cdot 0}a_1...a_{r-1}\otimes\bar{a}_{r-1}...\bar{a}_1\epsilon^{2(n-i-r+1)}a_1...a_i ]=(-1)^iy_r^ia_i,$$
where the last equality is due to remark \ref{Basis properties}(2).

On the other hand, for $r=n-i+1$, we have the equality

$$y_{n-i+1}^ia_i =\bar{a}_{i-1}...\bar{a}_1a_1...a_{n-i}\otimes\bar{a}_{n-i}...\bar{a}_1a_1...a_{i-1}a_i,$$
which is zero due again to remark \ref{Basis properties}(2). All
these considerations allow us to rewrite the equality $(\bigstar )$
given above as

$$(\alpha\circ\delta )(e_{i(a_i)}\otimes e_{t(a_i)})=\sum_{r=1}^{n-i}[(-1)^{u(r,i)}+(-1)^{v(r,i)}]y_r^ia_i,$$
where $u(r,i)=1+\frac{(i+1)i}{2}+i+u_r$ and
$v(r,i)=\frac{i(i-1)}{2}+u_r$. Since $u(r,i)-v(r,i)=2i+1$ we
conclude that $(\alpha\circ\delta )(e_{i(a_i)}\otimes e_{t(a_i)})=0$
as desired.

We next calculate $(\alpha\circ\delta )(e_{i(\epsilon )}\otimes
e_{t(\epsilon )})$. By definition, we have
$$\alpha (e_1\otimes
e_1)=(-1)^{1+\frac{1(1-1)}{2}}\sum_{r=1}^{n}(-1)^{u_r}y_r^1=\sum_{r=1}^{n}(-1)^{u_{r}+1}y_r^1$$Then
we have

$$(\alpha\circ\delta )(e_{i(\epsilon )}\otimes
e_{t(\epsilon )})=\epsilon\alpha (e_1\otimes e_1)-\alpha (e_1\otimes
e_1)\epsilon =\sum_{r=1}^{n}(-1)^{u_r+1}(\epsilon
y_r^1-y_r^1\epsilon ).$$ But, with the terminology of the proof of
lemma \ref{HH^2 x HH^4},  we also have the equality

$$\epsilon y_r^1-y_r^1\epsilon =\sum_{k=0}^{n-r}\epsilon^{2k+1}a_1...a_{r-1}\otimes\bar{a}_{r-1}...\bar{a}_1\epsilon^{2(n-r-k)}-
\sum_{k=0}^{n-r}\epsilon^{2k}a_1...a_{r-1}\otimes\bar{a}_{r-1}...\bar{a}_1\epsilon^{2(n-r-k)+1}=$$

$$\sum_{x\in e_rBe_1\text{, }deg(x)\equiv r-1}x^\dagger\otimes x-\sum_{x\in e_rBe_1\text{, }deg(x)\equiv r}x^\dagger\otimes
x=$$

$$(-1)^{\frac{r(r-1)}{2}}(\sum_{x\in e_rBe_1\text{,
}deg(x)\equiv r-1}x^*\otimes x-\sum_{x\in e_rBe_1\text{,
}deg(x)\equiv r}x^*\otimes x),$$ where $\equiv$ means 'congruent
modulo $2$'. Due to the fact that our basis $B$ is dualizable and
that $deg(x)+deg(x^*)\equiv 1$, for all $x\in B$,  we deduce that

$$\epsilon y_r^1-y_r^1\epsilon = (-1)^{\frac{r(r-1)}{2}}(\sum_{y\in e_1Be_r\text{, }deg(y)\equiv
r}y\otimes y^*-\sum_{y\in e_1Be_r\text{, }deg(y)\equiv r-1}y\otimes
y^*),$$ and hence

$$(-1)^{u_r+1}(\epsilon y_r^1-y_r^1\epsilon )= (-1)^{\frac{r(r-1)}{2}+u_r+1}(\sum_{y\in e_1Be_r\text{, }deg(y)\equiv
r}y\otimes y^*-\sum_{y\in e_1Be_r\text{, }deg(y)\equiv r-1}y\otimes
y^*),$$ for each $r=1,...,n$. Since $\frac{r(r-1)}{2}+u_r+1\equiv r$
it follows that the coefficient of $y\otimes y^*$ in the last
expression is precisely $(-1)^{deg(y)}$, for each $y\in e_1Be_r$.
From that the desired equality $(\alpha\circ\delta )(e_{i(\epsilon
)}\otimes e_{t(\epsilon )})=\sum_{y\in e_1B}(-1)^{deg(y)}y\otimes
y^*$ follows.

Once the commutativity of the initial diagram is proved, we get that
$\gamma^2$ is represented by the morphism of $\Lambda$-bimodules
$P\stackrel{\beta}{\longrightarrow}P\stackrel{\tilde{\gamma}}{\longrightarrow}\Lambda$,
which satisfies that $(\tilde{\gamma}\circ\beta )(e_i\otimes
e_i)=\delta_{i1}e_1$, for each $i\in Q_0$. This morphism represents
the element $e_1+Im(R^*)\in\frac{Ker(k_*)}{Im(R^*)}=HH^2(\Lambda
)=HH^8(\Lambda )$, which is precisely the element $z_1h$. Therefore
we have $\gamma^2=z_1h$ in $HH^*(\Lambda )$.

\end{proof}

We are now ready to prove  the two main theorems of the paper.

\vspace{1cm}

PROOF OF THE THEOREMS \ref{HH^* with Char not 2 not dividing 2n+1}
and \ref{HH^* with char 2n+1}: With the notation used until now, we
know from Proposition \ref{K-basis for HH^*Lambda1}  that the set
$\{x_0, x_1, \dots, x_n, y, z_1, \dots, z_n, t_1, \dots t_{n},
\gamma, h\}$ generates $HH^*(\Lambda)$ as an algebra. If $Char(K)$
does not divide $2n+1$, then lemma \ref{HH*1xHH*2} that each $t_i$
is a $K$-linear combination of the $yz_j$, which allows us to drop
all the $t_i$ from the set of generators. On the other hand, if
$Char(K)$ divides $2n+1$, then lemma \ref{HH*1xHH*2} tells us that
the $K$-subspace of $HH^3(\Lambda)$ generated by $\{yz_1, \dots
yz_n\}$ is $1$-dimensional. In the proof of this lemma it is
actually proved that $yz_j + (-1)^j(2j-1)yz_1=0$ since this is the
relationship between the rows (=columns) of the matrix $C$. In
addition, looking at the first row of this matrix, we see that
$yz_1=-nt_1+ (n-1)t_2 + \cdots (-1)^nt_n$, and hence $\{t_1, \dots,
t_{n-1}, yz_1\}$ is a basis of $HH^3(\Lambda)$. So, in case
$Char(K)$ divides $2n+1$, we can drop $t_n$ from the set of
generators. This proves that the set of generators given in Theorems
\ref{HH^* with Char not 2 not dividing 2n+1} and \ref{HH^* with char
2n+1} is correct.

From the fact that $Soc(\Lambda)HH^j(\Lambda)=0$ $\forall j>0$ one
readily obtains the relations in $i)$. From lemmas \ref{Center of
lambda}, \ref{HH^i as $Z(Lambda)-module$} and \ref{products
involving HH^3} we obtain all the relations in $ii)$ (in both cases)
except $y^2=0$. To see that this equality also holds, consider the
morphism of $\Lambda$-bimodules $\tilde{y}: \oplus_{a\in Q_1}\Lambda
e_{i(a)}\otimes e_{t(a)}\Lambda \longrightarrow \Lambda$ defined by
$\tilde{y}(e_{i(a)}\otimes e_{t(a)})=a$ $\forall a\in Q_1$ that
represents $y\in HH^1(\Lambda)$. If $\widehat{y}: \oplus_{a\in
Q_1}\Lambda e_{i(a)}\otimes e_{t(a)}\Lambda\longrightarrow \Lambda
e_i\otimes e_i \Lambda$ is the only morphism of $\Lambda$-bimodules
such that $\widehat{y}(e_{i(a)}\otimes e_{t(a)})=
\frac{1}{2}(a\otimes e_{t(a)} + e_{i(a)}\otimes a)$, for all $a\in
Q_1$, then we have $u\circ \widehat{y}= \tilde{y}$, where $u:
\oplus_{i\in Q_0}\Lambda e_i\otimes e_i\Lambda\longrightarrow
\Lambda$ is the multiplication map. Note now that we have a
commutative diagram in the category of $\Lambda$-bimodules:

$$\begin{large}\xymatrix{P\ar[r]^R \ar[d]_{\eta} & Q \ar[d]^{\widehat{y}} \\
Q \ar[r]_{\delta} & P}\end{large}$$ where $\eta$ is the only
morphism of $\Lambda$-bimodules such that $\eta(e_i\otimes e_i)=
\sum_{a\in Q_1, i(a)=1}e_{i(a)}\otimes \bar{a}$.

The definition of Yoneda product says that $y^2$ is represented by
the composition

$$\oplus_{i\in Q_0}\Lambda e_i\otimes e_i\lambda\stackrel{\eta}{\longrightarrow}
\oplus_{a\in Q_1}\Lambda e_{i(a)}\otimes e_{t(a)} \Lambda
\stackrel{\tilde{y}}{\longrightarrow}\Lambda$$ which takes
$e_i\otimes e_i\rightsquigarrow \sum_{a\in Q_1, i(a)=i}a\bar{a}=0$.

For the relations in $iii)$ note that the proof of lemma \ref{HH^i
as $Z(Lambda)-module$} gives an isomorphism $\phi_y: HH^4(\Lambda)
\stackrel{\sim}{\longrightarrow} HH^5(\Lambda)$ $(\xi
\rightsquigarrow y\xi)$. What we shall prove is the equality

$$z_j(yz_k)= (-1)^{k-j+1}(2j-1)(n-k+1)x_0^{n-1}y\gamma ,$$
from which the desired equality will follow.

Indeed, by lemma \ref{HH*1xHH*2}, we have that $yz_k= \sum_{l=1}^n
c_{lk}t_l$ and hence $z_j(yz_k)= \sum_{l=1}^n c_{lk}z_jt_l$. From
lemma \ref{products involving HH^3}(2) we then get

$$z_j(yz_k)= c_{jk}z_jt_j= (-1)^{k-j+1}(2j-1)(n-k+1)x_0^{n-1} y\gamma$$

Note that lemma \ref{products involving HH^3}(2) also proves the
relations in $vii)$ for Theorem \ref{HH^* with char 2n+1}. We claim
that it also gives the relations $iv)$. Indeed multiplication by $h$
gives an isomorphism of $Z(\Lambda)$-modules
$HH^1(\Lambda)\stackrel{\sim}{\longrightarrow}HH^7(\Lambda)$. In
particular $yh$ is the canonical generator of $HH^7(\Lambda)$ as
$Z(\Lambda)$-module, which implies that multiplication by $y$ gives
an isomorphism of $Z(\Lambda)$-modules $HH^6(\Lambda)
\stackrel{\sim}{\longrightarrow}HH^7(\Lambda)$. Similar to the
previous paragraph, we shall prove the equality

$$\gamma(yz_j)= (-1)^j(n-j+1)x_0^{n-1}yh,$$
from which the relations in $iv)$ will follow.

Again we have $yz_j= \sum_{l=1}^n c_{lj}t_l$ and so

$$\gamma(yz_j)= \sum_{l=1}^n c_{lj}\gamma t_l = c_{1j}x_0^{n-1}yh=
(-1)^j(n-j+1)x_0^{n-1}yh,$$ using the relations in viii) which have
already been proved in lemma \ref{products involving HH^3}(3).

Finally, lemma \ref{HH^4 x HH^4} gives the relations in $v)$ while,
when $Char(K)$ divides $2n+1$, the equality $yz_j +
(-1)^j(2j-1)yz_1=0$ mentioned in the first paragraph of this proof
gives the relations in $vi)$ of Theorem \ref{HH^* with char 2n+1}.

The previous paragraphs show that there is a surjective homomorphism
of graded algebras from the algebra given by the mentioned
generators and relations to the algebra $HH^*(\Lambda)$. By looking
at dimensions in each degree, it is not difficult to see that it is
actually an isomorphism.

\vspace*{0.3cm}
\begin{rem} \rm
In \cite{Eu}[Section 9] the graded ring structure of $HH^*(\Lambda
)$ was calculated taking $\mathbf{C}$ as ground field. However,  the
arguments and calculations appear to be valid whenever $char(K)\neq
2$ and $char(K)$ does not divide $2n+1$. Then, with the suitable
changes derived from the different presentations of the algebra, our
Theorem \ref{HH^* with Char not 2 not dividing 2n+1} could be
derived from Eu's work.

Eu's methods use  sometimes direct calculation of the products
$HH^i(\Lambda )\cdot HH^j(\Lambda )$, sometimes  the graded
condition of the minimal projective resolution of $\Lambda$ (see
9.2) and sometimes the matrix Hilbert series $H_\Lambda (t)$ (see
Definition 2.5.2) together with the equality
 $H_\Lambda (t)=(1+t^{2n+1})((1+t^2)I_n-Dt)^{-1}$ proved in
 \cite{MOV}, where $D$ is the adjacency matrix of $\mathbb{L}_n$ (see the proof of  Lemma 9.3.3 and  section 6.2 in \cite{Eu}).

 We have not used these tools in our paper and have directly
 calculated all products $HH^i(\Lambda )\cdot HH^j(\Lambda )$
 working with the bases of Proposition \ref{K-basis for HH^*Lambda1}
 which already included in themselves some products.
\end{rem}

\begin{cor} \label{cor:presentation:stable cohomology ring}

Let us fix the presentation of $HH^*(\Lambda)$ given by Theorems
\ref{HH^* with Char not 2 not dividing 2n+1} or \ref{HH^* with char
2n+1}. A presentation of $\underline{HH}^*(\Lambda )$ is obtaining
from it by doing the following:
\begin{enumerate}
\item Replace the generators
 $x_1,...,x_n$ by a new generator $h'$ of degree $-6$ \item Replace
 the relations i) in the list by a new relation $hh'=1$.
 \item Leave the remaining generators and relations unchanged.
 \end{enumerate}
\end{cor}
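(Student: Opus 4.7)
The plan is to deduce the corollary directly from Proposition \ref{localization}, which asserts that $\underline{HH}^*(\Lambda)$ is isomorphic, as a graded algebra, to the localization $HH^*(\Lambda)_{(h)}$, with $h'$ corresponding to the inverse $\tfrac{1}{h}$ of $h$. So the task reduces to computing this localization explicitly from either of the presentations given in Theorems \ref{HH^* with Char not 2 not dividing 2n+1} or \ref{HH^* with char 2n+1}.

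First I would observe that the relations i) in both theorems say that each $x_i$ ($i = 1,\dots,n$) annihilates every generator, and in particular $x_i h = 0$. Hence in the localization $HH^*(\Lambda)_{(h)}$ one has $x_i = \tfrac{x_i h}{h} = 0$, so the images of $x_1, \dots, x_n$ vanish. Consequently the relations i) become trivial in $\underline{HH}^*(\Lambda)$ and can be dropped; simultaneously $x_1,\dots,x_n$ disappear from the list of generators. On the other hand, introducing $h' := h^{-1}$ adds one new generator of degree $-6$ and one new relation $hh' = 1$.

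Next I would argue that no other generators or relations are needed. For generators: once $h'$ is adjoined, every homogeneous element of $\underline{HH}^*(\Lambda)$ of negative degree can be written as a monomial in the old generators $x_0, y, z_j, (t_k), \gamma, h$ multiplied by a suitable power of $h'$, because the localization is obtained set-theoretically as fractions $\xi/h^n$. For relations: any relation between the images of the remaining generators and $h'$ in $\underline{HH}^*(\Lambda)$ can, after multiplication by a sufficiently high power of $h$, be moved into $HH^*(\Lambda)$, where it becomes a consequence of the original relations ii)--v) (resp.\ ii)--viii) in the second theorem). Thus the only necessary relations in the localization are the original ones (with $x_i = 0$ substituted, which makes i) vacuous) together with $hh' = 1$.

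The only minor subtlety is to justify that dropping $x_1,\dots,x_n$ from the generating set is legitimate, i.e.\ that no relation in the old presentation was essential for producing an element of $\underline{HH}^*(\Lambda)$ that cannot already be expressed in terms of $x_0, y, z_j, (t_k), \gamma, h, h'$; this is immediate from the canonical $K$-bases of the $HH^i(\Lambda)$ given in Proposition \ref{K-basis for HH^*Lambda1}, none of whose elements in positive degree involves any $x_i$ with $i\geq 1$. Putting these observations together yields exactly the presentation stated in the corollary.
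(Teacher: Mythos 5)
Your argument is correct and follows exactly the route the paper takes: identify $\underline{HH}^*(\Lambda)$ with the localization $HH^*(\Lambda)_{(h)}$ via Proposition \ref{localization}, and then check that adjoining $h'=h^{-1}$ kills $x_1,\dots,x_n$ (since $x_ih=0$) and turns the stated data into a presentation of that localization. You merely spell out the details that the paper compresses into ``it is immediately seen.''
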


\begin{proof}
It is immediately seen that the graded commutative algebra given by
the just described generators and relations is isomorphic to
$HH^*(\Lambda )_{(h)}$, whence isomorphic to
$\underline{HH}^*(\Lambda )$ (see Proposition \ref{localization}).
\end{proof}

\section{Acknowledgements}

\hspace{0.4cm} The first part of this work was done while I was at
University of Oxford as a Ph. D. student under the supervision of
Karin Erdmann. I would like to thank her for guidance and kindness.

\vspace{0.3cm} I am indebted to my advisor Manuel Saorín for his
suggestions and contributions which improved and led to a better
presentation of this paper.

\section{References}

\end{document}